\numberwithin{equation}{section}
\def\Omstar{{\Om^*}}
\def\H{\mathcal H}
\def\R{\mathbb R}
\def\N{\mathbb N}
\def\e{\varepsilon}
\def\s{\sigma}
\def\S{\Sigma}
\def\vphi{\varphi}
\def\Div{{\rm div}\,}
\def\om{\omega}
\def\l{\lambda}
\def\g{\gamma}
\def\k{\kappa}
\def\Om{\Omega}
\def\de{\delta}
\def\Id{{\rm Id}}
\def\spt{{\rm spt}}
\def\pa{\partial}
\newcommand{\hd}{\mathrm{hd}}
\renewcommand{\a}{\alpha}
\renewcommand{\b}{\beta}
\renewcommand{\l}{\lambda}
\newcommand{\Lip}{{\rm Lip}}
\renewcommand{\Div}{{\rm div \,}}
\newcommand{\ov}{\overline}
\newcommand{\diam}{\mathrm{diam}}
\newcommand{\cc}{\subset\subset}
\newcommand{\op}[1]{\operatorname{\text{\rm #1}}}
\def\C{\mathbf{C}}
\def\D{\mathbf{D}}
\def\K{\mathbf{K}}
\theoremstyle{plain}
\newtheorem{theorem}{Theorem}[section]
\newtheorem{lemma}[theorem]{Lemma}
\newtheorem{proposition}[theorem]{Proposition}
\newtheorem{remark}[theorem]{Remark}
\def\SS{\mathbb{S}}
\title{Isoperimetry with upper mean curvature bounds and sharp stability estimates}
\author{B. Krummel}
\address{Mathematics Department, The University of Texas at Austin,
2515 Speedway Stop C1200, RLM 8.100 Austin, TX 78712, USA}
\email{bkrummel@math.utexas.edu}
\author{F. Maggi}
\address{Mathematics Department, The University of Texas at Austin,
2515 Speedway Stop C1200, RLM 8.100 Austin, TX 78712, USA}
\email{maggi@math.utexas.edu}
\begin{document}

\begin{abstract}
{\rm It was proved by Almgren that among boundaries whose mean curvature is bounded from above, perimeter is uniquely minimized by balls. We obtain sharp stability estimates for Almgren's isoperimetric principle and, as an application, we deduce a sharp description of boundaries with almost constant mean curvature under a total perimeter bound which prevents bubbling.}
\end{abstract}

\maketitle

\section{Introduction} \subsection{Overview} Our starting point is Almgren's paper \cite{almgren86}, where various optimal isoperimetric theorems, involving generalized surfaces and mappings in arbitrary codimension, are introduced. The main goal of \cite{almgren86} is proving the Euclidean isoperimetric inequality in higher codimension. Omitting to specify the crucial point of what is meant by ``area minimization with fixed boundary'', this is the statement: if $S$ is a $n$-dimensional compact surface without boundary in $\R^{n+k}$, $k\ge 1$, and $\Om_S$ is any $(n+1)$-dimensional area minimizing surface spanned by $S$, then
\begin{equation}
  \label{almgren isoperimetric inequality k}
  \frac{\H^n(S)}{\H^{n+1}(\Om_S)^{n/(n+1)}}\ge\frac{\H^n(D)}{\H^{n+1}(\Om_D)^{n/(n+1)}}
\end{equation}
where $\H^m$ is the $m$-dimensional Hausdorff measure in $\R^{n+k}$, $D$ is a unit radius $n$-dimensional sphere in $\R^{n+k}$, and thus $\Om_D$ is a unit radius $(n+1)$-dimensional ball in $\R^{n+k}$.

Almgren's proof of \eqref{almgren isoperimetric inequality k} roughly goes as follows. Assume that $S$ minimizes the left-hand side of \eqref{almgren isoperimetric inequality k} among boundary-less surfaces enclosing a minimal $(n+1)$-area equal to $\H^{n+1}(\Om_D)$. By a first variation argument one finds $|\vec{H}_S|\le n$, where $\vec{H}_S$ denotes the mean curvature vector of $S$ (with the convention that $|\vec{H}_D|=n$ for the unit $n$-sphere $D$). The proof is then completed by proving (see below for more details on this point) the following isoperimetric principle: if $S$ is a boundary-less surface with $|\vec{H}_S|\le n$, then $\H^n(S)\ge\H^n(D)$. This last fact is what we call here {\it Almgren's isoperimetric principle}.

The goal of our paper is addressing the stability of Almgren's isoperimetric principle in the codimension one case $k=1$. This case is relevant in the study of hypersurfaces with almost constant mean curvature, which, as discussed below, is in turn motivated by applications to capillarity theory and geometric flows. We obtain a sharp stability analysis of Almgren's principle, and we deduce from it new sharp results on hypersurfaces with almost constant mean curvature.

It is now convenient to restate Almgren's principle for smooth codimension one boundaries by taking advantage of the fact that, when $k=1$, Plateau's problem is trivial (as each boundary-less hypersurface $S$ is the boundary of just one set $\Om_S$ of finite volume): {\it if $\Om$ is a (non-empty) bounded open set with smooth boundary in $\R^{n+1}$ and $H_\Om$ denotes the mean curvature of $\pa\Om$ (computed with respect to the outer unit normal $\nu_\Om$ to $\Om$), then
\begin{equation}
  \label{almgren principle}
  H_\Om(x)\le n\quad\forall x\in\pa\Om\qquad\mbox{implies}\qquad P(\Om)\ge P(B_1)\,,
\end{equation}
with $P(\Om)=P(B_1)$ if and only if $\Om$ is a unit radius ball.} Here $n\ge 1$, $B_r(x)=\{y\in\R^{n+1}:|x-y|<r\}$ (for $x\in\R^{n+1}$ and $r>0$), $B_1=B_1(0)$ (so that $H_{B_1}=n$) and $P(\Om)=\H^n(\pa\Om)$ is the perimeter of $\Om$.


Almgren's proof of \eqref{almgren principle} is beautifully simple (and, quite interestingly, very close to the argument used in the theory of fully nonlinear elliptic equations to obtain the fundamental Alexandrov-Bakelman-Pucci estimate; see \cite{caffarellicabre}). If $A$ denotes the convex envelope of $\Om$, then the Gaussian curvature $K_A$ of $\pa A$ is equal to the Jacobian of the outer unit normal map $\nu_A:M\to\SS^n$ (where $\SS^n$ is the unit sphere) which in turn is injective by convexity. Hence, by the area formula
\[
P(B_1)=\H^n(\SS^n)=\int_{\pa A}\,K_A\,.
\]
Now, $K_A$ is the product of $n$ non-negative principal curvatures, so that by the arithmetic-geometric mean inequality $K_A\le(H_A/n)^n$; and, actually, $K_A=0$ outside of the contact set $ \pa\Om\cap \pa A$. Since, by assumption, $H_A=H_\Om\le n$ on $\pa A\cap\pa \Om$,
\[
P(B_1)=\int_{\pa A\cap\pa \Om}K_A\le\int_{\pa A\cap\pa \Om}\Big(\frac{H_A}{n}\Big)^n\le\H^n(\pa A\cap\pa \Om)\le \H^n(\pa\Om)=P(\Om)\,.
\]
We have thus proved that
\begin{eqnarray}\label{almgren identity}
  P(\Om)-P(B_1)=\H^n(\pa \Om\setminus\pa A)+\underbrace{\int_{\pa A\cap\pa \Om}\bigg(1-\Big(\frac{H_\Om}{n}\Big)^n\bigg)}_\text{$\ge0$ as $H_\Om\le n$ on $\pa\Om$}
  +
  \underbrace{\int_{\pa A\cap\pa \Om}\Big(\frac{H_A}{n}\Big)^n-K_A}_\text{$\ge0$ by $A$ convex, a.-g. mean inequality}
\end{eqnarray}
which clearly implies \eqref{almgren principle}.

Identity \eqref{almgren identity} is the starting point for discussion the rigidity assertion that, if $H_\Om\le n$ and $P(\Om)=P(B_1)$, then $\Om$ is a ball. Indeed, by combining $H_\Om\le n$ and $P(\Om)=P(B_1)$ into \eqref{almgren identity} we find that $\Om$ is convex and that $\pa\Om$ has constant mean curvature (equal to $n$) and it is umbilical at each of its points.

Each one of the last two properties individually implies that $\Om=B_1(x)$ for some $x\in\R^{n+1}$, in the first case thanks to Alexandrov's theorem (see \eqref{alexandrov theorem} below), and in the second case thanks to the {\it Nabelpunktsatz}:
\begin{equation}
  \label{nabelpunktsatz}
  \mbox{$\pa\Om$ is umbilical at each point if and only if $\Om=B_r(x)$ for some $r>0$ and $x\in\R^{n+1}$}\,.
\end{equation}
A third way of deducing from \eqref{almgren identity} that if $H_\Om\le n$ and $P(\Om)=P(B_1)$, then $\Om$ is a unit radius ball, is by exploiting the Euclidean isoperimetric inequality (see \eqref{isoperimetric inequality} below). Indeed, \eqref{almgren identity} implies $H_\Om=n$ on $\pa\Om$, and then by the divergence theorem and by the tangential divergence theorem one finds
\begin{eqnarray*}
(n+1)\,|\Om|&=&\int_\Om\,\Div\,x=\int_{\pa\Om}x\cdot \nu_\Om=\frac1n\,\int_{\pa\Om}H_\Om\,x\cdot \nu_\Om
\\
&=&\frac1n\,\int_{\pa\Om}\Div^{\pa\Om}(x)=\H^n(\pa\Om)=P(\Om)=P(B_1)=(n+1)\,|B_1|
\end{eqnarray*}
that is, $|\Om|=|B_1|$ (here $|\Om|=\H^{n+1}(\Om)$ is the volume of $\Om$). This last information combined with $P(\Om)=P(B_1)$ says that $\Om$ is an equality case in \eqref{isoperimetric inequality}, and thus that $\Om=B_1(x)$ for some $x\in\R^{n+1}$.

We aim at obtaining sharp stability estimates for the isoperimetric principle \eqref{almgren principle}. This is achieved in Theorem \ref{thm main}, Theorem \ref{thm structure} and Theorem \ref{thm sharp u} below, where the structure of sets with small $P(\Om)-P(B_1)$ is fully described and sharply quantified in terms of various notions of distance of $\Om$ from being a ball. As a by-product we obtain a new sharp stability result for Alexandrov's theorem, concerning the quantitative description of boundaries with almost-constant mean curvature: see Theorem \ref{thm alex} and Theorem \ref{thm alex L2} below.

The rest of this introduction is organized as follows. In section \ref{section stability related} we recall some stability results for related isoperimetric principles, which serves to illustrate the context of our main theorems. In section \ref{section main results} we state our main stability theorems for Almgren's isoperimetric principle, while in section \ref{section alex intro} we discuss the application to Alexandrov's theorem. Finally, in section \ref{section organization}, we address the organization of the paper.

\subsection{Stability theory for related isoperimetric principles}\label{section stability related} As noticed above, the characterization of equality cases in Almgren's principle can be addressed by exploiting either the Eucldiean isoperimetric inequality, Alexandrov's theorem or the {\it Nabelpunktsatz}. A presentation of some of the various stability theorems that have been obtained for these three isoperimetric principles is a necessary premise to the statement of our main results. We shall discuss in detail the situation for the Euclidean isoperimetric inequality and for Alexandrov's theorem, since Almgren's isoperimetric principle is sitting, so to say, in between these two theorems (see Remark \ref{remark sitting}). The {\it Nabelpunktsatz} also has a stability theory with sharp and non-sharp results, for which we refer readers to the seminal papers \cite{delellismuller1,delellismuller2} in the two-dimensional case, and to \cite{perez} for additional results in higher dimension.

Let us recall that given a Borel set $\Om\subset\R^{n+1}$ with finite and positive volume, the {\it Euclidean isoperimetric inequality} says
\begin{equation}
  \label{isoperimetric inequality}
  P(\Om)\ge P(B_1)\,\Big(\frac{|\Om|}{|B_1|}\Big)^{n/(n+1)}\,,
\end{equation}
where equality holds if and only if $\Om=B_r(x)$ for some $r>0$ and $x\in\R^{n+1}$. (In this generality, $P(\Om)$ denotes the distributional perimeter of $\Om$.) A sharp stability estimate for \eqref{isoperimetric inequality} is the improved isoperimetric inequality
\begin{equation}
  \label{quantitative isoperimetric inequality F asymmetry}
  P(\Om)\ge P(B_1)\,\Big(\frac{|\Om|}{|B_1|}\Big)^{n/(n+1)}\,\Big\{1+c(n)\,\a(\Om)^2\Big\}
\end{equation}
where $c(n)>0$ and $\a(\Om)$ denotes the {\it Fraenkel asymmetry of $\Om$}, defined as
\[
\a(\Om)=\inf\Big\{\frac{|\Om\Delta B_r(x)|}{|\Om|}:|B_r(x)|=|\Om|\,,x\in\R^{n+1}\Big\}\,;
\]
see \cite{fuscomaggipratelli,maggibams,FigalliMaggiPratelliINVENTIONES,CicaleseLeonardi}. Inequality \eqref{quantitative isoperimetric inequality F asymmetry} is sharp in the sense that no function of $\a(\Om)$ converging to $0$ more slowly than $\a(\Om)^2$ can appear on the right hand side of \eqref{quantitative isoperimetric inequality F asymmetry}. When considering some {\it a priori} geometric bound on $\Om$ one can obtain a qualitatively stronger information than a control on $\a(\Om)$. This kind of result is more conveniently stated by introducing the {\it isoperimetric deficit of $\Om$}
\[
\de_{{\rm iso}}(\Om)=\frac{P(\Om)\,|B_1|^{n/(n+1)}}{P(B_1)\,|\Om|^{n/(n+1)}}-1
\]
(a non-negative, scale invariant quantity which vanishes if and only if $\Om$ is a ball), in terms of which \eqref{quantitative isoperimetric inequality F asymmetry} takes the form
\[
\de_{{\rm iso}}(\Om)\ge c(n)\,\a(\Om)^2\,.
\]
We also recall the further improvement appeared in \cite{fuscojulin}, namely
\[
\de_{{\rm iso}}(\Om)\ge c(n)\,\Big\{\a(\Om)^2+\min_{x_0\in\R^{n+1}}\fint_{\pa\Om}\Big|\nu_\Om(x)-\frac{x-x_0}{|x-x_0|}\Big|^2\,d\H^n_x\Big\}\,.
\]
Denoting by $\hd$ the Hausdorff distance between compact subsets of $\R^{n+1}$, we introduce the {\it Hausdorff asymmetry of $\Om$}
\[
\hd_\a(\Om):=\inf\Big\{\frac{\hd(\pa\Om,\pa B_r(x))}r:|B_r(x)|=|\Om|\,,x\in\R^{n+1}\Big\}\,,
\]
and then recall the main result from \cite{Fuglede}: if $\Om$ is a convex set with $\de(\Om)\le\e$ for a suitable $\e$ depending on $n$ only, then
\begin{equation}
  \label{quantitative isoperimetric inequality hd asymmetry}
  c(n)\,\hd_\a(\Om)\le\left\{
  \begin{split}
    &\de_{{\rm iso}}(\Om)^{1/2}\,,\hspace{3cm}\mbox{if $n=1$}\,,
    \\
    &\de_{{\rm iso}}(\Om)^{1/2}\log^{1/2}(1/\de_{{\rm iso}}(\Om))\,,\hspace{0.5cm}\mbox{if $n=2$}\,,
    \\
    &\de_{{\rm iso}}(\Om)^{1/n}\,,\hspace{3cm}\mbox{if $n\ge 3$}\,.
  \end{split}
  \right .
\end{equation}
We notice that inequality \eqref{quantitative isoperimetric inequality hd asymmetry} also holds (with same exponents) whenever $\Om$ satisfies a uniform cone condition \cite{fuscogellipisante} or a uniform John's domain condition \cite{rajalazhong}. For a recent survey on \eqref{quantitative isoperimetric inequality F asymmetry} and related issues, see \cite{fuscosurveystab}.

We now discuss some stability results for {\it Alexandrov's theorem}: if $\Om$ is an open set in $\R^{n+1}$ with boundary of class $C^2$, then
\begin{equation}
  \label{alexandrov theorem}
  \mbox{$H_\Om$ is constant if and only if $\Om=B_r(x)$ for some $r>0$ and $x\in\R^{n+1}$.}
\end{equation}
The stability problem for Alexandrov's theorem amounts in understanding the geometry of boundaries with almost-constant mean curvature. To this end it is convenient to introduce the positive quantity
\begin{equation}\label{H0Omega}
H^0_\Om=\frac{n\,P(\Om)}{(n+1)|\Om|}\,,
\end{equation}
which has the following property: if there exists $c\in\R$ such that $H_\Om=c$ on $\pa\Om$, then $c=H^0_\Om$. Next, we define the {\it constant mean curvature deficit of $\Om$} as
\begin{equation}
  \label{alexandrov deficit}
\de_{{\rm cmc}}(\Om)=\Big\|\frac{H_\Om}{H_\Om^0}-1\Big\|_{L^\infty(\pa\Om)}\,.
\end{equation}
This quantity is scale invariant and by \eqref{alexandrov theorem} it vanishes if and only if $\Om$ is a ball. The use of the $L^\infty$-norm in the definition of $\de_{{\rm cmc}}(\Om)$ arises naturally in the study of capillarity theory, see \cite[Section 1.2]{ciraolomaggi}. The consideration of an $L^2$-type deficit would be interesting in view of applications to mean curvature flows.

A stability estimate in terms of $\de_{{\rm cmc}}(\Om)$ has been obtained in \cite{ciraolovezzoni} under the assumption that $\Om$ satisfies an interior/exterior ball condition of radius $\rho>0$ at each point of its boundary: if $\de_{{\rm cmc}}(\Om)\le\de_0(n,\rho,P(\Om))$, then
\begin{equation}
  \label{ciraolovezzoni inq}
  \hd_\a(\Om)\le C(n,\rho,P(\Om))\,\de_{{\rm cmc}}(\Om)\,.
\end{equation}
The decay rate of $\hd_\a(\Om)$ in terms of $\de_{{\rm cmc}}(\Om)$ in \eqref{ciraolovezzoni inq} is sharp. This result is obtained by making quantitative the original moving planes argument of Alexandrov, and using some kind of uniform ball condition seems unavoidable to this end. In view of applications to the study of local minimizers or critical points of capillarity-type energies this assumption is too restrictive. Moreover, an important consequence of the uniform ball assumption is that it prevents the observation of bubbling phenomena. Bubbling is observed, for example, by truncating and then smoothly completing unduloids with very thin necks. In this way one can construct sets $\Om$ with $\de_{{\rm cmc}}(\Om)$ arbitrarily small that are converging to {\it arrays of tangent balls}, rather than to a single ball. As shown in \cite{ciraolomaggi} this is actually the only mechanism by which one can construct boundaries with almost constant mean curvature: more precisely, working with a set $\Om$ that has been rescaled to that $H^0_\Om=n$, one has that if $L\in\N$, $\tau\in(0,1)$, and
\[
P(\Om)\le (L+\tau)\,P(B_1)\qquad \de_{{\rm cmc}}(\Om)\le \de_0
\]
then there exists a finite union $G$ of (at most $L$) tangent unit radius balls such that
\[
\max\Big\{|P(\Om)-P(G)|,|\Om\Delta G|,\hd(\pa\Om,\pa G)\Big\}\le C_0\,\de_{{\rm cmc}}(\Om)^{\a}\,;
\]
moreover, denoting by $\S$ the part of $\pa G$ obtained by removing a finite family of spherical caps, each with diameter bounded by $\de_{{\rm cmc}}(\Om)^{\a}$, there exists a map $u\in C^1(\S)$ such that
\begin{eqnarray*}
S=\big\{(1+u(x))\,\nu_G(x):x\in\S\big\}\subset\pa\Om\,,\qquad \H^n\big(\pa\Om\setminus S\big)\le
C_0\,\de_{{\rm cmc}}(\Om)^{\a}\,,
\end{eqnarray*}
and $\|u\|_{C^1(\S)}\le C_0\, \de_{{\rm cmc}}(\Om)^{\a}$. The constants $\de_0$ and $C_0$ depend on $L$, $\l$ and $n$ only, and $\a={\rm }O(n^{-p})$ for explicit values of $p\in\N$. This quantitative description of bubbling is not sharp, and an open problem is that to refining it to obtain sharp decay rates.

\subsection{Main results}\label{section main results} Our first main result is a sharp stability theorem for Almgren's isoperimetric principle \eqref{almgren principle}. Here and in the following we set
\[
\de(\Om)=P(\Om)-P(B_1)
\]
so that $\de(\Om)\ge0$ for every open set with smooth boundary such that $H_\Om\le n$ thanks to Almgren's principle.

\begin{theorem}[Main stability inequality]\label{thm main}
  For every $n\ge 1$ there exists positive constants $\de_0(n)$ and $c_0(n)$ with the following property. If $\Om\subset\R^{n+1}$ is a bounded, open set with smooth boundary such that $H_\Om(x)\le n$ for every $x\in\pa\Om$ and $\de(\Om)\le\de_0(n)$, then there exists $x\in\R^n$ such that
  \begin{equation}
    \label{quantitative almgren principle sharp Omega}
      P(\Om)\ge P(B_1)+c_0(n)\bigg\{|\Om\Delta B_1(x)|+\inf\Big\{\e>0:\Om\subset B_{1+\e}(x)\Big\}\bigg\}\,.
  \end{equation}
\end{theorem}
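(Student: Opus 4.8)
The engine of the proof is Almgren's identity \eqref{almgren identity}: with $A=\co\Om$ it writes $\de(\Om)=P(\Om)-P(B_1)$ as a sum of three nonnegative terms, namely $\H^n(\pa\Om\setminus\pa A)$, $\int_{\pa A\cap\pa\Om}(1-(H_\Om/n)^n)$ and $\int_{\pa A\cap\pa\Om}((H_A/n)^n-K_A)$, each of which is therefore $\le\de(\Om)\le\de_0(n)$. First I would use these to control $A$. Since convexification does not increase perimeter, $P(B_1)\le P(A)\le P(\Om)$, so $P(A)-P(B_1)\le\de(\Om)$; and since the Gauss map of $\pa A$ already covers $\SS^n$ when restricted to the contact set $\S:=\pa A\cap\pa\Om$ (the supporting hyperplane with outer normal $e$ touches $A$ at an extreme point of $\co\Om$, hence at a point of $\pa\Om$), one has $\int_\S K_A=P(B_1)$. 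As $K_A\le(H_A/n)^n\le1$ on $\S$, the three smallness bounds give $P(B_1)\le\H^n(\S)\le P(B_1)+C\,\de(\Om)$, whence $\H^n(\pa A\setminus\pa\Om)=P(A)-\H^n(\S)\le C\,\de(\Om)$. Applying the Euclidean isoperimetric inequality \eqref{isoperimetric inequality} to the open set $A\setminus\ov\Om$---whose reduced boundary is $\H^n$-contained in $(\pa A\setminus\pa\Om)\cup(\pa\Om\setminus\pa A)$, since at $\H^n$-a.e.\ contact point the two boundaries are tangent and $A\setminus\ov\Om$ has zero density there---then yields $|A|-|\Om|=|A\setminus\ov\Om|\le C(n)\,\de(\Om)^{(n+1)/n}$.

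Next I would show $A$ is close to a ball. Using $H_A=H_\Om$ $\H^n$-a.e.\ on $\S$, the bound $\int_\S(1-(H_\Om/n)^n)\le\de(\Om)$ gives $\int_\S(n-H_A)\le n\,\de(\Om)$, and hence $\int_\S(1-\tfrac1n H_A)^2\le\de(\Om)$ as well (since $0\le H_A\le n$ on $\S$), while the bound $\int_\S((H_A/n)^n-K_A)\le\de(\Om)$, combined with a quantitative arithmetic--geometric mean inequality for the principal curvatures $\k_i\ge0$ of $\pa A$ under the constraint $\sum_i\k_i=H_A$, gives $\int_\S\sum_i(\k_i-\tfrac1n H_A)^2\le C(n)\,\de(\Om)$; hence $\int_\S\sum_i(\k_i-1)^2\le C(n)\,\de(\Om)$, i.e.\ the second fundamental form of $\pa A$ is $L^2$-close to $\Id$ on $\S$, which carries all but a $C\,\de(\Om)$ portion of the surface measure of $\pa A$. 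Writing $\pa A$ through its support function $h=h_A\colon\SS^n\to\R$, the principal radii of curvature are the eigenvalues of $\n^2_{\SS^n}h+h\,\Id$, so the previous estimate says $\n^2_{\SS^n}h+h\,\Id$ is $L^2(\SS^n)$-close to $\Id$; since $\n^2_{\SS^n}\var+\var\,\Id=0$ forces $\var$ to be a degree-one spherical harmonic $e\mapsto x\cdot e$, spectral (Fuglede-type) estimates on $\SS^n$ produce a point $x\in\R^{n+1}$ with $\|h-1-x\cdot e\|_{W^{2,2}(\SS^n)}\le C(n)\,\de(\Om)^{1/2}$. Thus $A$, and then also $\Om$ (as $\Om\subset A$ and $|A\setminus\Om|$ is tiny), is $C^0$-close to $B_1(x)$; after a translation we may assume $x=0$, so that (by the same structural information) $\Om$ is star-shaped about $0$ and $y\cdot\nu_\Om(y)>0$ for all $y\in\pa\Om$.

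The last and decisive step is to improve the exponent $1/2$ into the linear rate of \eqref{quantitative almgren principle sharp Omega}, and this is where the \emph{one-sided} bound $H_\Om\le n$ enters in the spirit of the Alexandrov-Bakelman-Pucci estimate. From the divergence theorem and the tangential divergence theorem (as in the introduction) one has $P(\Om)=\tfrac1n\int_{\pa\Om}H_\Om\,(y\cdot\nu_\Om)\le\int_{\pa\Om}y\cdot\nu_\Om=(n+1)|\Om|$, so together with $P(\Om)\ge P(B_1)=(n+1)|B_1|$ and \eqref{isoperimetric inequality},
\[
c(n)\,\big(|\Om|-|B_1|\big)\ \le\ \de(\Om)\ \le\ (n+1)\,\big(|\Om|-|B_1|\big)\,,
\]
so the nonnegative volume excess $|\Om|-|B_1|$ is comparable to $\de(\Om)$. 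I would then rerun the curvature bookkeeping keeping track of signs: the curvature defect $n-H_\Om\ge0$ satisfies $\int_\S(n-H_\Om)\le C\,\de(\Om)$, and the support-function equation $\Delta_{\SS^n}h+n\,h-n=\sum_i(\k_i^{-1}-1)$ has a right-hand side which, off the small-measure set where some $\k_i$ is far from $1$, is nonnegative and of size comparable to $n-H_\Om$; this provides a one-sided $L^1$ bound on $\Delta_{\SS^n}h+n\,h-n$. An ABP/maximum-principle estimate for the operator $\Delta_{\SS^n}+n$---modulo its kernel, the degree-one harmonics, which corresponds exactly to translating the ball---upgrades this to $\|h-1-x\cdot e\|\le C(n)\,\de(\Om)$ in a norm strong enough to control both $|A\,\Delta\,B_1(x)|$ and $\sup_{\pa A}|y|-1$; combining with the first step this gives $|\Om\,\Delta\,B_1(x)|\le C\,\de(\Om)$ and $\inf\{\e>0:\Om\subset B_{1+\e}(x)\}\le C\,\de(\Om)$, which is \eqref{quantitative almgren principle sharp Omega}.

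The qualitative closeness to a ball is routine; the crux is the final step, and in particular the one-sided Hausdorff bound $\Om\subset B_{1+\e}(x)$ with $\e\le C\,\de(\Om)$. There one must convert an $L^1$-type control of the curvature defect into a \emph{pointwise} control of the outward bulge of $\pa A$ although no bound on the full second fundamental form of $\pa A$ is available---only $H_A\le n$, $K_A\le1$, and smallness in measure of the non-umbilical part. Exploiting the convexity of $A$ together with the sign of $n-H_\Om$ to close this gap (an ABP-type, rather than an $L^p$-regularity-type, argument) is the technical heart; it is also precisely the mechanism preventing any linear-deficit inward ``dent'' of $\pa\Om$, and hence what makes the linear rate in \eqref{quantitative almgren principle sharp Omega} attainable in the first place.
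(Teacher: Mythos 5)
Your opening moves are sound and in fact parallel the way the paper itself exploits Almgren's identity \eqref{almgren identity}: the bounds $\H^n(\pa\Om\setminus\pa A)\le\de(\Om)$, $\H^n(\pa A\setminus\pa\Om)\le C\,\de(\Om)$ and $|A\setminus\Om|\le C(n)\,\de(\Om)^{(n+1)/n}$ for the convex hull $A$, and the $L^2$-smallness of $\sum_i(\k_i-1)^2$ on the contact set, are all believable. But already the passage to the support function is not justified: what you control is $\int(\k_i-1)^2$ against $\H^n\llcorner\pa A$ on the contact set, whereas the Fuglede-type spectral step needs $\nabla^2_{\SS^n}h+h\,\Id-\Id$ small in $L^2(\SS^n)$, i.e. control of the principal \emph{radii} $1/\k_i-1$ in the Gauss-map parametrization. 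On the exceptional set (small in area but not in any stronger sense) the $\k_i$ may degenerate to $0$, $h_A$ need not even be $C^1$ (the hull can have flat faces), and no bound on $1/\k_i$ is available, so the intermediate estimate $\|h-1-x\cdot e\|_{W^{2,2}(\SS^n)}\le C\de(\Om)^{1/2}$ does not follow as stated.

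The decisive gap, however, is the final step, which you yourself identify as the heart. There is no ABP or maximum-principle estimate for $\Delta_{\SS^n}+n$ converting a one-sided $L^1$ bound on $(\Delta_{\SS^n}+n)(h-1)$ into a pointwise (even one-sided) bound on $h-1$ modulo degree-one harmonics: the operator has a positive zeroth-order coefficient and infinitely many negative eigenvalues, so no maximum principle holds, and with merely $L^1$ data a curvature defect concentrated near one point makes $h-1$ behave like $\de$ times the Green's function, i.e. like $\de\log(1/\de)$ for $n=2$ and $\de^{1/(n-1)}$ for $n\ge3$ --- exactly the sharp, non-linear rates of \eqref{sharp c0}, whose optimality is proved in Section \ref{section example} by such a concentrated perturbation; sup bounds from integral data genuinely require $L^q$ with $q>n/2$, as in \eqref{stima after}. (A two-sided pointwise linear bound is in fact false even for the convex hull: shaving a flat cap of opening $\rho$ off $B_1$ gives $\de\sim\rho^{n+2}$ but Hausdorff deviation $\sim\rho^2$.) The paper gets the linear outward and $L^1$ bounds by a different mechanism: Theorem \ref{thm structure}(ii) (the obstacle-problem truncation of Proposition \ref{prop E construction} plus Allard) replaces $\Om$ by $E\supset\Om$ with $|H_E|\le n$ and $\pa E$ a $C^1$-small normal graph; then \eqref{moser} --- Moser iteration applied to the subsolution inequality $\Div(M\nabla v)\ge0$, a pure second-order divergence-form inequality with no zeroth-order term --- gives $\|u^+\|_{C^0}\le C\|u\|_{L^1}$, while the linear $L^1$ bound is proved separately via the compactness/spectral Lemma \ref{lemma elliptic sphere} (where the translation kernel is handled through the barycenter and the quadratic estimate \eqref{b controlled by DR}) together with the $L^1$-data elliptic estimate of Lemma \ref{elliptic lemma 2}. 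Your sketch provides no substitute for either ingredient; in particular an $L^1$ bound on $(\Delta_{\SS^n}+n)(h-1)$ carries no information whatsoever on the degree-one component, so even the linear Fraenkel bound is not closed by the argument as written.
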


Estimate \eqref{quantitative almgren principle sharp Omega} says that $\de(\Om)$ controls linearly the Fraenkel asymmetry of $\Om$ and ``one side'' of its Hausdorff asymmetry whenever $\de(\Om)$ is small enough. The decay rate is sharp, in the sense that it is not possible to control these quantities by any function of $\de(\Om)$ going to zero faster than $\de(\Om)$ itself. A simple example showing this is obtained by considering the family of sets $\Om_t=B_{1+t}$ as $t\to 0^+$. Moreover, outside of the regime when $\de(\Om)$ is small we cannot expect to control the geometry of $\Om$, and it is not even true that $\a(\Om)={\rm O}(\de(\Om))$: to see this, pick any bounded smooth set $E$, set $\Om=R\,E$ for $R$ large enough to entail $H_\Om\le n$, and then $\a(\Om)={\rm O}(|\Om|)={\rm O}(R^{n+1})={\rm O}(P(\Om)^{(n+1)/n})={\rm O}(\de(\Om)^{(n+1)/n})$ as $R\to\infty$.

We also notice that one cannot hope to obtain a better type of geometric information on the boundary of $\Om$. A first example showing this, that can be observed already in dimension $n=1$, is obtained by letting $\Om$ be a unit ball with arbitrarily many tiny holes, whose boundaries have large but negative mean curvature, and whose presence prevents $\Om$ from containing a ball of radius $1-\e$ (i.e., the other ``side'' of the Hausdorff asymmetry estimate does not hold). If $n=1$ this kind of problem can be avoided by assuming that $\pa\Om$ is connected, but in dimension $n\ge 2$ one can indeed draw the same conclusions by constructing sets $\Om$ satisfying the assumptions of Theorem \ref{thm main}, with $P(\Om)-P(B_1)$ arbitrarily small, and with arbitrarily long ``inner tentacles'' of very negative mean curvature; see
\begin{figure}
  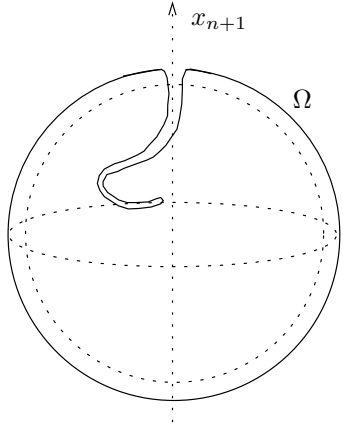\caption{{\small If $n\ge 2$, a set $\Om\subset\R^{n+1}$ with $H_\Om\le n$ can have an inner tentacle of length one with small volume and area, and perimeter arbitrarily close to $P(B_1)$. Notice that one needs to start from a ball with radius slightly larger than $1$ (and thus with mean curvature slightly smaller than $n$) to create a tentacle. Indeed, at the opening of the tentacle, $\Om$ turns faster than its reference ball.}}\label{fig innertentacle}
\end{figure}
Figure \ref{fig innertentacle}.

These two examples exploit the possibility for $H_\Om$ to be arbitrarily negative. Now there are two important remarks: first, if we assume a lower bound on the mean curvature, in addition to the upper bound $H_\Om\le n$, then it is possible to control the full Hausdorff asymmetry with $\de(\Om)$; second, given a set $\Om$ with $H_\Om\le n$ we can always find a set $E$ with $|H_E|\le n$ and whose distance from $\Om$ is controlled in terms of $\de(\Om)$. In our next result we start providing a complete quantitative description of the geometry of sets with small $\de(\Om)$. In particular we show that up to holes and inner tentacles of small perimeter, every such that is a $C^1$-small deformation of a unit ball.

\begin{theorem}[Structure of sets with small deficit]
  \label{thm structure}
  Let $n\ge 1$ and let $\Om\subset\R^{n+1}$ be a bounded, open set with smooth boundary such that $H_\Om(x)\le n$ for every $x\in\pa\Om$.

  \medskip

  \noindent (i) If $\de(\Om)<P(B_1)$, then $\Om$ is connected and there exists a bounded open set $\Om^*$ such that $\pa\Om^*$ is connected and
  \begin{eqnarray*}
    &&\mbox{$\Om\subset\Om^*$ with $\pa\Om^*\subset\pa\Om$}\,,
    \\
    &&\diam(\Om)=\diam(\Om^*)\,,
    \\
    &&\H^n(\pa\Om\setminus\pa\Om^*)\le C(n)\,\de(\Om)\,,
    \\
    &&|\Om^*\setminus\Om|\le C(n)\,\de(\Om)^{(n+1)/n}\,.
  \end{eqnarray*}

  \medskip

  \noindent (ii) If $\de(\Om)\le\de_0(n)$, then there exists an open bounded set $E$ with boundary of class $C^{1,1}$ such that
  \begin{eqnarray*}
    &&\Om\subset E
    \\
    &&\diam(\Om)=\diam(E)\,,
    \\
    &&|E\setminus\Om|+\H^n\big(\pa E\setminus\pa\Om\big)\le C(n)\,\de(\Om)
    \\
    &&\|H_E\|_{L^\infty(\pa E)}\le n\,.
  \end{eqnarray*}
  In addition,  up to translations,
  \begin{equation}
    \label{u}
      \pa E=\big\{(1+u(x))x:x\in\SS^n\big\}
  \end{equation}
  for some function $u\in C^1(\SS^n)$, and for every $\e>0$
  \[
  \de(\Om)\le\de_0(n,\e)\qquad\Rightarrow\qquad\|u\|_{C^1(\SS^n)}\le\e\,.
  \]
\end{theorem}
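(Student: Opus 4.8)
The plan is to combine Almgren's identity \eqref{almgren identity}, applied to the convex envelope $A:=\co\,\Om$, with elementary topology for part (i), and with the ``outer'' control furnished by Theorem \ref{thm main} for part (ii). For part (i) the starting point is that the three summands in \eqref{almgren identity} are each $\le\de(\Om)$, so in particular $\H^n(\pa\Om\setminus\pa A)\le\de(\Om)$. If $\Om$ were disconnected, Almgren's principle \eqref{almgren principle} applied to each of its components would force $P(\Om)\ge 2\,P(B_1)$, against $\de(\Om)<P(B_1)$; hence $\Om$ is connected. Let $U_\infty$ be the unbounded component of $\R^{n+1}\setminus\ov\Om$ and set $\Om^*:=\R^{n+1}\setminus\ov{U_\infty}$, i.e.\ $\Om$ with its ``holes'' filled in. Then $\Om\subset\Om^*$ and $\pa\Om^*\subset\pa U_\infty\subset\pa\Om$; since $\R^{n+1}\setminus A\subset U_\infty$ one gets $\Om^*\subset A$, whence $\diam(\Om)\le\diam(\Om^*)\le\diam(A)=\diam(\Om)$; and since $\pa A\cap\pa\Om\subset\pa\Om^*$ we obtain $\H^n(\pa\Om\setminus\pa\Om^*)\le\H^n(\pa\Om\setminus\pa A)\le C(n)\,\de(\Om)$. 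The connectedness of $\pa\Om^*$ follows from that of $\Om$ by a standard topological argument using the smoothness of $\pa\Om$. Finally $\Om^*\setminus\Om$ is, up to an $\H^{n+1}$-null set, the disjoint union of the bounded components $V_j$ of $\R^{n+1}\setminus\ov\Om$; at a point where $\pa V_j$ meets $\pa A$ the open set $V_j$ would lie simultaneously on the two sides of the common tangent plane of $\pa\Om$ and $\pa A$, which is impossible, so $\H^n(\pa V_j)=\H^n(\pa V_j\setminus\pa A)=P(V_j)$. The isoperimetric inequality \eqref{isoperimetric inequality} gives $|V_j|\le C(n)\,P(V_j)^{(n+1)/n}$, and summing (the sets $\pa V_j\setminus\pa A$ being disjoint subsets of $\pa\Om\setminus\pa A$, and $\sum_j a_j^{(n+1)/n}\le(\sum_j a_j)^{(n+1)/n}$)
\[
|\Om^*\setminus\Om|=\sum_j|V_j|\le C(n)\,\Big(\sum_j\H^n(\pa V_j\setminus\pa A)\Big)^{(n+1)/n}\le C(n)\,\H^n(\pa\Om\setminus\pa A)^{(n+1)/n}\le C(n)\,\de(\Om)^{(n+1)/n}.
\]

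For part (ii), assume $\de(\Om)\le\de_0(n)$. By Theorem \ref{thm main}, after a translation we may assume $|\Om\Delta B_1|\le C(n)\,\de(\Om)$ and $\Om\subset B_{1+C(n)\de(\Om)}$. Then $A=\co\,\Om$ is convex with $A\subset B_{1+C\de}$, so by monotonicity of perimeter and volume under inclusion of convex bodies $P(A)\le P(B_{1+C\de})\le P(B_1)+C(n)\de(\Om)$ and $|A|\le|B_1|+C(n)\de(\Om)$, while $|A|\ge|\Om|\ge|B_1|-C(n)\de(\Om)$; a short argument based on $|\Om\Delta B_1|\le C\de$ also gives $A\supset B_{1-\eta}$ with $\eta=\eta(\de(\Om))\to0$, so $\hd(\pa A,\SS^n)\le\eta$. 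Combining $P(A)\le P(B_1)+C\de$ with Almgren's area-formula bound $\H^n(\pa A\cap\pa\Om)\ge\int_{\pa A}K_A=P(B_1)$ yields the crucial estimate $\H^n(\pa A\setminus\pa\Om)=P(A)-\H^n(\pa A\cap\pa\Om)\le C(n)\,\de(\Om)$. Moreover, on the contact set $\Gamma:=\pa A\cap\pa\Om$ the second fundamental form of $\pa\Om$ agrees with that of $\pa A$ (this is what underlies the identity $H_A=H_\Om$ there) and is positive semidefinite, with trace $H_\Om\le n$; hence $0\le\mathrm{II}_{\pa\Om}\le n\,\Id$ on $\Gamma$, so $\Gamma$ is a convex $C^{1,1}$ piece of hypersurface of curvature $\le n$ covering all but an $\H^n$-mass $\le C\de$ of $\pa A$.

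Next one builds $E$ with $\Om\subset E\subset A$ — so that $\diam(E)=\diam(\Om)$ automatically — by retaining $\Gamma$ and replacing the small defect $\pa A\setminus\Gamma$ by $C^{1,1}$ patches of curvature $\le n$, glued $C^1$ to $\Gamma$ along $\pa\Gamma$, kept inside $A$ and inside the thin shell $B_{1+C\de}\setminus B_{1-\eta}$; the existence of such a regularization is the main technical point, and uses that the defect has $\H^n$-mass $\le C\de$ and is surrounded by the curvature-$\le n$ surface $\Gamma$. By construction $\pa E$ is $C^{1,1}$, $\|H_E\|_{L^\infty(\pa E)}\le n$, $|E\setminus\Om|\le|A\setminus\Om|=|A|-|\Om|\le C(n)\de(\Om)$ and $\H^n(\pa E\setminus\pa\Om)\le C\,\H^n(\pa A\setminus\Gamma)\le C(n)\de(\Om)$. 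Since $\pa E\subset B_{1+C\de}\setminus B_{1-\eta}$ it is Hausdorff $\eta'$-close to $\SS^n$ with $\eta'=\eta'(\de(\Om))\to0$; and because $\pa E$ has \emph{uniformly} bounded curvature, $\|\mathrm{II}_{\pa E}\|\le n$, a unit normal deviating from the radial direction by an angle $\theta$ at one point would deviate by $\ge\theta/2$ on a surface ball of radius $\gtrsim\theta/n$, forcing $\pa E$ to leave the shell unless $\theta\le C\sqrt{n\,\eta'}$. Thus $\nu_E$ is everywhere within $\theta<\pi/2$ of the radial direction, so (after a translation) $\pa E=\{(1+u(x))x:x\in\SS^n\}$ with $u\in C^1(\SS^n)$ and $\|u\|_{C^1(\SS^n)}\le C\sqrt{\eta'(\de(\Om))}\to0$ as $\de(\Om)\to0$, which is the last assertion.

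The genuinely delicate point is the construction of $E$: one must fill the small, but possibly geometrically complicated, defect $\pa A\setminus\Gamma$ by a $C^{1,1}$ surface satisfying the \emph{sharp} bound $H\le n$, all the while keeping $E$ sandwiched between $\Om$ and $A$. This is also exactly why passing from $\Om$ to $E$ is indispensable: $\pa\Om$ carries no uniform curvature bound — recall the inner tentacles of Figure \ref{fig innertentacle} — whereas $\pa E$ does, and only a uniform (not merely $L^1$) curvature bound upgrades Hausdorff proximity to the sphere into $C^1$ proximity.
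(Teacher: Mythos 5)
Your part (i) is essentially correct: filling the holes of $\Om$ (i.e.\ taking the complement of the closure of the unbounded component of $\R^{n+1}\setminus\ov\Om$) and estimating the hole boundaries via Almgren's identity \eqref{almgren identity} and the isoperimetric inequality is a valid variant of the paper's argument, which instead applies Almgren's principle to the outward-oriented components of $\pa\Om$; the only soft spot is the appeal to a ``standard topological argument'' for the connectedness of $\pa\Omstar$, which does hold (unicoherence) but deserves a line.

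Part (ii), however, has genuine gaps. First, it is circular: you start by invoking Theorem \ref{thm main}, but in the paper Theorem \ref{thm main} is \emph{deduced from} Theorem \ref{thm structure} together with Theorem \ref{thm sharp u}, so it cannot be used here; and your quantitative bounds ($P(A)\le P(B_1)+C\de$, $|A|-|\Om|\le C\de$, hence $|E\setminus\Om|\le C\de$) all flow from that forbidden input. Second, the construction of $E$ is exactly the step you leave as a black box: ``replace $\pa A\setminus\Gamma$ by $C^{1,1}$ patches of curvature $\le n$, glued $C^1$ along $\pa\Gamma$, inside $A$'' is the whole difficulty, and nothing in your sketch guarantees that such patches exist, that they can be kept \emph{outside} $\Om$ (so that $\Om\subset E$ — the inner tentacles sit precisely where you want to cap), or that the resulting boundary satisfies the required two-sided bound $-n\le H_E\le n$ (you only discuss $H\le n$). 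The paper produces $E$ as a minimizer of $P(E)+n|E|$ among $E\supset\Om$ (Proposition \ref{prop E construction}): minimality gives for free $\Om\subset E\subset A$, the linear estimate $n|E\setminus\Om|+\H^n(\pa^*E\setminus\pa\Om)\le\de(\Om)$ by testing with $\Om$ and \eqref{almgren identity} (which replaces your circular volume bound), $H_E\ge -n$ by first variation, $H_E=H_\Om$ on the contact set, and $C^{1,1}$ regularity via almost-minimizer regularity plus an obstacle-problem barrier argument. Third, your final step derives the $C^1$-graph property from ``$\|\mathrm{II}_{\pa E}\|\le n$'', but the construction only yields $\|H_E\|_{L^\infty(\pa E)}\le n$; a mean-curvature bound does not control the second fundamental form, so the ``tilted normal forces the surface out of the thin shell'' argument is unavailable. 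The paper instead proves $|E_h\Delta B_1|\to 0$ by a compactness argument (a divergence-theorem/tangential-divergence estimate giving $\big||E_h|-|B_1|\big|\le C(n)\de(E_h)$, then the isoperimetric theorem) and concludes via Allard's regularity theorem, which is the correct tool when only an $L^\infty$ bound on the mean curvature and a perimeter bound are at hand.
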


\begin{remark}\label{remark structure}
  {\rm By Theorem \ref{thm structure}, if $\Om$ has $H_\Om\le n$ on $\pa\Om$ and $\de(\Om)$ small, then $\pa\Om$ has a large connected component $\pa\Om^*$ which accounts for all the perimeter of $\Om$ up to an error of order $\de(\Om)$. In turn, we can chop $\pa\Om^*$ where its mean curvature is less than $-n$, and complete it into a new set $E$ with bounded mean curvature; see
  \begin{figure}
  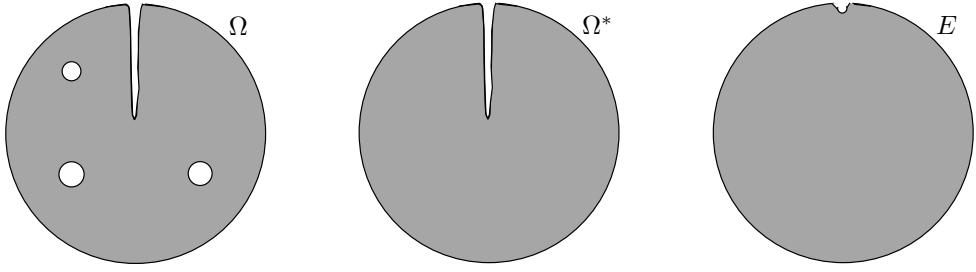\caption{{\small Theorem \ref{thm structure}. A set $\Om$ with small $\de(\Om)$ is connected, and all the connected components of its boundary but one have small perimeter. The set $\Om^*$ is obtained by removing them, and may contain (if $n\ge 2$) inner tentacles of order one length. Finally, the set $E$, which is essentially obtained by truncating the mean curvature $H_\Om$ where $H_\Om<-n$.}}\label{fig structure}
  \end{figure}
  Figure \ref{fig structure}. The error we make in doing this is linear in $\de(\Om)$ both in volume and perimeter. The new set $E$ is a small $C^1$-deformation of the sphere, and Theorem \ref{thm sharp u} below is the sharp stability theorem for this kind of sets. Thus by combining Theorem \ref{thm structure} and Theorem \ref{thm sharp u} we shall obtain a complete and sharp analysis of sets with $\de(\Om)$ small. Theorem \ref{thm main} will be a direct consequence of these results.}
\end{remark}

\begin{remark}
  {\rm Theorem \ref{thm structure} requires using a non-classical notion of mean curvature, suitable for boundaries of class $C^{1,1}$. As explained in more detail in section \ref{section notation} below, for every an open set with $C^{1,1}$-boundary $E$ there exists a function $H_E\in L^\infty(\H^n\llcorner\pa E)$ such that
  \[
  \int_{\pa E}\Div^{\pa E}X\,d\H^n=\int_{\pa E}(X\cdot\nu_{E})\,H_E\,d\H^n\qquad\forall X\in C^\infty_c(\R^{n+1};\R^{n+1})\,,
  \]
  (where $\Div^{\pa E} X=\Div X-\nu_{E}\cdot \nabla X[\nu_E]$). The function $H_E$ is the {\it generalized mean curvature of (the boundary of) $E$ (with respect to the outer unit normal $\nu_E$)}. In the specific case of Theorem \ref{thm structure}, $E$ is constructed by solving a penalized obstacle problem, see Proposition \ref{prop E construction}, and it will turn out that $\pa E$ is actually analytic, with constant mean curvature equal to $-n$, on $\pa E\setminus\pa\Om$. The construction of $\Om^*$ in statement (i) is, technically speaking, much simpler, as it is just based on the repeated application of Almgren's principle to the connected components of $\pa\Om$. From the formal point of view we shall just need part (ii) in the proof of Theorem \ref{thm main}, and part (i) has just been included because it is based on an explicit bound on $\de(\Om)$, and its proof is based on a very natural idea.}
\end{remark}

In order to complete the quantitative description of sets with small $\de(\Om)$ we are left to quantify the size of the function $u$ appearing in \eqref{u}. This is done in the next theorem.

\begin{theorem}[Stability of normal perturbations of $\SS^n$]
  \label{thm sharp u}
  Let $n\ge 1$, let $\Om$ be the open bounded set with smooth boundary in $\R^{n+1}$ with $H_\Om\le n$ $\H^n$-a.e. on $\pa \Om$, such that
  \[
  \int_{\pa \Om}x\,d\H^n_x=0\qquad
  \pa \Om=\big\{(1+u(x))x:x\in\SS^n\big\}\,,
  \]
  for a function $u:\SS^n\to\R$ such that
  \[
  \|u\|_{C^1(\SS^n)}\le \e(n)\,.
  \]
  If $\e(n)$ is suitably small, then
  \begin{equation}
    \label{sharp barycenter}
      \frac{\de(\Om)}{C(n)}\le\int_{\SS^n}u\le C(n)\,\de(\Om)
  \end{equation}
  and
  \begin{eqnarray}
  \label{sharp L1}
    \|u\|_{W^{1,1}(\SS^n)}&\le & C(n)\,\de(\Om)\,,
    \\\label{sharp c0+}
    \|u^+\|_{C^0(\SS^n)}&\le& C(n)\,\de(\Om)\,,
    \\\label{sharp c0}
    \|u\|_{C^0(\SS^n)}&\le&C(n)\,\left\{
    \begin{split}
      &\de(\Om)\,,&\qquad\mbox{if $n=1$}\,,
      \\
      &\de(\Om)\,\log\left(\frac{C(2)}{\de(\Om)}\right)\,,&\qquad\mbox{if $n=2$}\,,
      \\
      &\de(\Om)^{1/(n-1)}\,,&\qquad\mbox{if $n\ge 3$}\,.
    \end{split}
    \right .
    \\
    \label{sharp w12}
    \|u\|_{W^{1,2}(\SS^n)}&\le& C(n)\,\sqrt{\|u\|_{C^0(\SS^n)}\,\de(\Om)+\de(\Om)^2}\,.
  \end{eqnarray}
  All these estimates are sharp.
\end{theorem}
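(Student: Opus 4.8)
\noindent\textit{Proof proposal.} Throughout I work in the radial graph parametrization $\pa\Om=\{(1+u(\theta))\theta:\theta\in\SS^n\}$, recording the second-order Taylor expansions in $u$ over $\SS^n$. Writing $Q_2(u):=\tfrac{n(n-1)}2\int_{\SS^n}u^2+\tfrac12\int_{\SS^n}|\nabla u|^2\ge0$ for the second-order part of the deficit, one has
\[
\de(\Om)=n\!\int_{\SS^n}\!u+Q_2(u)+R_3,\qquad |\Om|-|B_1|=\!\int_{\SS^n}\!u+\tfrac n2\!\int_{\SS^n}\!u^2+R_3',
\]
where $|R_3|,|R_3'|\le C(n)\|u\|_{C^1}\|u\|_{W^{1,2}}^2$ because these cubic remainders involve no derivatives of $u$ beyond the first; moreover $(x\cdot\nu_\Om)\,d\H^n=(1+u)^{n+1}\,d\theta$ on $\pa\Om$, the barycenter condition forces the first spherical-harmonic component of $u$ to be quadratically small, and the mean curvature of the graph has the divergence structure $H_\Om=n-\mathrm{div}_{\SS^n}\big(\mathbf a(\theta,u,\nabla u)\big)+\tilde b(\theta,u,\nabla u)$ with $\mathbf a$ uniformly elliptic and $\approx\nabla u$, $\tilde b\approx-nu$ (no second derivatives of $u$), so $H_\Om=n-(\Delta_{\SS^n}u+nu)+Q^{\mathrm{mc}}(u)$ with $Q^{\mathrm{mc}}$ quadratic. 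The three structural inputs are: Almgren's principle $\de(\Om)\ge0$ together with identity \eqref{almgren identity} for $A=\mathrm{co}(\Om)$; the Minkowski identity $\int_{\pa\Om}H_\Om(x\cdot\nu_\Om)\,d\H^n=nP(\Om)$; and the Euclidean isoperimetric inequality together with Fuglede's stability lemma for nearly spherical sets.

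\medskip

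\noindent\textit{Crude bounds, and the one-sided estimates.} Since $\de(\Om),Q_2(u)\ge0$, the perimeter expansion gives $n\int_{\SS^n}u\le\de(\Om)$ and $Q_2(u)\le C(n)\de(\Om)$ (for $\|u\|_{C^1}$ small). The Minkowski identity with $H_\Om\le n$ and $x\cdot\nu_\Om=(1+u)^2/\sqrt{(1+u)^2+|\nabla u|^2}>0$ yields $P(\Om)\le(n+1)|\Om|$, hence with Almgren's principle $\de(\Om)/(n+1)\le|\Om|-|B_1|$, while isoperimetry gives $|\Om|-|B_1|\le C(n)\de(\Om)$, so $\de_{\mathrm{iso}}(\Om)\le C(n)\de(\Om)$; rescaling $\Om$ to unit volume (which preserves $C^1$-smallness of the graph and the zero barycenter) and applying Fuglede's lemma gives $\|u\|_{W^{1,2}(\SS^n)}^2\le C(n)\de(\Om)$. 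Next, rewriting $H_\Om\le n$ in divergence form exhibits $u$ as a subsolution of $-\mathrm{div}_{\SS^n}(M\nabla u)=F$ with $M\approx\mathrm{Id}$ and $F=(H_\Om-n)-\tilde b$; since $n-H_\Om\ge0$ and $\int_{\SS^n}F\,d\theta=\int_{\SS^n}(H_\Om-n)\,d\theta=-n\int_{\SS^n}u+O(\e\|u\|_{W^{1,1}})$, the curvature defect is controlled, $\|n-H_\Om\|_{L^1(\SS^n,d\theta)}\le C(n)\de(\Om)+C\e\|u\|_{W^{1,1}}$. Plugging this into the equation and using that $\nabla(-\Delta_{\SS^n}+1)^{-1}$ is bounded on $L^1$ (kernel $O(\mathrm{dist}^{1-n})\in L^1$), a short bootstrap absorbing $Q^{\mathrm{mc}}$ gives $\|u\|_{W^{1,1}(\SS^n)}\le C(n)\de(\Om)$, i.e. \eqref{sharp L1}. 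Because the subsolution property is one-sided, $u^+$ is the good part: De Giorgi--Nash--Moser in $L^1$-to-$L^\infty$ form, applied to the divergence-form inequality (so no second derivatives of $u$ enter), gives $\|u^+\|_{C^0}\le C(n)(\|u^+\|_{L^1}+\|\tilde b\|_{L^q})$; using that \eqref{almgren identity} bounds $\H^n(\pa\Om\setminus\pa A)$ and $|A|-|\Om|$ by $C(n)\de(\Om)$, hence the radial gap between $\pa A$ and $\pa\Om$, one gets $\int_{\SS^n}u^-\le C(n)\de(\Om)$ and so $\|u\|_{L^1}\le C(n)\de(\Om)$, giving \eqref{sharp c0+}. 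Finally, inserting $\|u\|_{L^1}\le C(n)\de(\Om)$ and $\int u^2\le\|u\|_{C^0}\|u\|_{L^1}$ into $|\Om|-|B_1|=\int u+\tfrac n2\int u^2+R_3'$, the lower bound $|\Om|-|B_1|\ge\de(\Om)/(n+1)$ forces $\int_{\SS^n}u\ge\de(\Om)/(2(n+1))$, completing \eqref{sharp barycenter}.

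\medskip

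\noindent\textit{The full $C^0$ estimate and the $W^{1,2}$ estimate.} For $u^-$ no maximum principle is available, but modulo its kernel (the first two eigenspaces, already controlled via barycenter and Step 1) $u$ is the Green potential, for the Jacobi operator $L=-\Delta_{\SS^n}-n$, of $Lu$, and $\|Lu\|_{L^1(\SS^n)}=\int(Lu)^-=-\int Lu+2\int(Lu)^+\le n\int u+2\|Q^{\mathrm{mc}}\|_{L^1}\le C(n)\de(\Om)$ after bootstrap. Since the integral kernel of $L$ on $\SS^n$ behaves near the diagonal like the Newtonian kernel ($O(\mathrm{dist}^{2-n})$ for $n\ge3$, $O(\log(1/\mathrm{dist}))$ for $n=2$, bounded for $n=1$), and since $\|u\|_{C^1}\le\e$ forces any concentration of $Lu$ to occur at scale $\gtrsim\|u^-\|_{C^0}/\e$, evaluating the potential at the bottom of the deepest dent gives $\|u^-\|_{C^0}$ bounded by $C(n)\de(\Om)$, $C(2)\de(\Om)\log(1/\de(\Om))$, $C(n)\de(\Om)^{1/(n-1)}$ for $n=1$, $2$, $n\ge3$ respectively, which with \eqref{sharp c0+} is \eqref{sharp c0}. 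For \eqref{sharp w12}, $\int u^2\le\|u\|_{C^0}\|u\|_{L^1}\le C\|u\|_{C^0}\de(\Om)$ by interpolation, while $\int|\nabla u|^2$ is estimated by testing $H_\Om\le n$ against a suitable radial vector field (producing $\tfrac{n+2}2\int|\nabla u|^2\le n\int u+O(\|u\|_{C^0}\|u\|_{W^{1,2}}^2)$) and combining with $\de(\Om)-n\int u=Q_2(u)+R_3$ and the sharp two-sided bound $\int u\asymp\de(\Om)$; this yields $\|u\|_{W^{1,2}}^2\le C(n)\big(\|u\|_{C^0}\de(\Om)+\de(\Om)^2\big)$.

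\medskip

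\noindent\textit{Main obstacle and sharpness.} The chief difficulty is that under the hypotheses $H_\Om\le n$, $\|u\|_{C^1}\le\e$ alone one has \emph{no} control on $\|u\|_{C^2}$, equivalently on $\Delta_{\SS^n}u$: $\pa\Om$ may carry arbitrarily sharp inward dents, and this is exactly what produces the genuinely nonlinear, dimension-dependent rate in \eqref{sharp c0}. Hence every nonlinear remainder must be handled in divergence form and every use of the curvature constraint must be one-sided, forcing the De Giorgi--Nash--Moser machinery above; moreover the relevant singular integrals fail to be bounded on $L^1$, so the bootstraps absorbing $Q^{\mathrm{mc}}$ have to be run in $L^{1+\epsilon}$ or with weak-type substitutes, and keeping track of the sharp rates through these steps is the delicate point. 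Matching lower bounds: \eqref{sharp barycenter}, \eqref{sharp L1}, \eqref{sharp c0+} are saturated by $\Om=B_{1+t}$ as $t\to0^+$; \eqref{sharp c0} for $n\ge2$ and \eqref{sharp w12} are saturated by one-parameter families obtained by carving into a slightly dilated ball a single dent modeled, at a scale tuned to the $C^1$-bound, on the fundamental solution of $\Delta_{\SS^n}$, the dilation being precisely the price that $H_\Om\le n$ imposes in order to admit the dent.
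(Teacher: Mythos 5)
Your overall architecture is close to the paper's (Fuglede-type expansion in spherical harmonics, the divergence-form curvature identity \eqref{H formula u}, a one-sided test of $n-H^*\ge 0$, Moser iteration for $u^+$, and $L^1$-data elliptic theory modulo the Jacobi kernel with the barycenter killing the degenerate directions), and it contains one genuinely different and attractive idea: you control $\|H_\Om-n\|_{L^1(\pa\Om)}$ by integrating \eqref{H formula u} over $\SS^n$ (the divergence term drops out, $n-H^*\ge0$, and the crude bounds $\int_{\SS^n}u\le C\de(\Om)$, $\|u\|_{W^{1,2}}^2\le C\de(\Om)$ close it) — exactly the identity the paper only uses later, in \eqref{stima a Alex 0}. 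If carried out, this would let you run the linear estimates of Lemma \ref{thm quant u prelim 2} directly on $\Om$ and bypass the paper's detour through the truncated set $E$ of Theorem \ref{thm structure}(ii)/Proposition \ref{prop E construction}, whose only role in this proof is to replace the two-sided bound needed in \eqref{H to delta}. That is a real structural simplification worth noting.

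However, the decisive steps are not actually proved. (1) For \eqref{sharp L1} you invert $(-\Delta_{\SS^n}+1)^{-1}$: this is circular, since the equation is $(-\Delta_{\SS^n}-n)u=(H^*-n)+\mathrm{quadratic}$, so rewriting it with $-\Delta+1$ puts $(n+1)u$ back on the right with no smallness; the correct operator is the Jacobi operator, whose kernel is \emph{only} the degree-one harmonics (constants have eigenvalue $-n$, so "the first two eigenspaces" is wrong), and closing the estimate with $L^1$ curvature data and divergence-form quadratic errors is precisely the content of Lemma \ref{elliptic lemma 2} (the $W^{1,p}$, $p<n/(n-1)$, estimate with $L^1$ right-hand side) and Lemma \ref{lemma elliptic sphere} (inversion modulo the kernel, using that the barycenter makes the degree-one component quadratically small). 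You explicitly defer this ("bootstraps in $L^{1+\epsilon}$\dots the delicate point"), and your fallback for $\int_{\SS^n}u^-\le C\de(\Om)$ via the convex hull is unjustified: \eqref{almgren identity} gives $\H^n(\pa\Om\setminus\pa A)\le\de(\Om)$ and hence $|A\setminus\Om|\le C\de(\Om)^{(n+1)/n}$, but converting this into a linear bound on $\int u^-$ requires knowing that $\pa A$ stays radially above $1-C\de(\Om)$, which is essentially what is being proved. (2) For \eqref{sharp c0} the step "any concentration of $Lu$ must occur at scale $\gtrsim\|u^-\|_{C^0}/\e$" is a heuristic, not an inequality; the paper's rigorous route is Fuglede's interpolation inequality \eqref{from fuglede} applied to $u-a$, combined with the refined bound \eqref{sharp w12} and an absorption. (3) For \eqref{sharp w12}, the inequality "$\tfrac{n+2}{2}\int|\nabla u|^2\le n\int u+\dots$" from "testing against a radial vector field" is unsubstantiated: testing $n-H^*\ge0$ against $u+\sup u^-$ gives $\int|\nabla u|^2\le n\int u^2+n(\sup u^-)\int u+\e\,{\rm O}(\|u\|_{W^{1,2}}^2)$, which is borderline because the Poincar\'e constant on $\SS^n$ is exactly $n$; the argument only closes through the improved spectral gap \eqref{R poincare} (constant $2(n+1)$) on the component orthogonal to constants and linear functions, together with the quadratic smallness of $b$ — this absorption, which also feeds the sharp $C^0$ rate, is missing from your sketch.
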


\begin{remark}
  {\rm Estimate \eqref{sharp L1}, \eqref{sharp c0+} and \eqref{sharp c0} can be read in more geometric by taking into account that
  \begin{eqnarray*}
  |\Om\Delta B_1|&\le&C(n)\,\|u\|_{L^1(\Om)}
  \\
  \inf\big\{\e>0:\Om\subset B_{1+\e}\big\}&\le&C(n)\,\|u^+\|_{C^0(\SS^n)}
  \\
  \hd(\pa\Om,\SS^n)&\le&C(n)\,\|u\|_{C^0(\SS^n)}\,.
  \end{eqnarray*}}
\end{remark}

\begin{remark}\label{remark sitting}
  {\rm It seems useful to illustrate the links and differences between the stability problems for the isoperimetric inequality, Alexandrov's theorem, and Almgren's isoperimetric principle.  Consider the functional $F(\Om)$ on sets with finite perimeter with positive and finite volume $\Om\subset\R^{n+1}$ defined by
  \[
	F(\Om) = \frac{P(\Om)}{|\Om|^{n/(n+1)}} \,.
\]
The isoperimetric theorem and Alexandrov's theorem say that the only global minimizers of $F$ are its only critical points, namely balls in $\R^{n+1}$.  Let $\Sigma$ denote the set of all balls in $\R^{n+1}$.  Stability for the isoperimetric inequality means controlling the distance of $\Om$ from $\Sigma$ in terms of the deviation of $F(\Om)$ from the minimum value of $F$.  Stability for Alexandrov's theorem means controlling the distance of $\Om$ from $\Sigma$ in terms of the size of $\de F$, the first variation of $F$. In this second stability problem a complication is due to the presence of ``critical points at infinity'' (here we are borrowing some terminology from the Yamabe problem, see \cite{bahribook}): precisely, arrays of almost tangent balls with equal radii connected by short necks provide families of almost critical points to $F$.  Stability for Almgren's isoperimetric principle means controlling, under a unilateral constraint on $\de F$, the distance of $\Om$ from $\Sigma$ in terms of the deviation of $F$ from its minimum value on the constrained class.

In each problem we permit different classes of variations of balls. Consider for example variations of the form $\pa\Om=\{ (1+u(x))\,x: x \in \SS^n \}$ for some $u\in C^1(\SS^n)$ with small $C^1$-norm. Taking $u$ to be constant correspond to scaling, so the average of $u$ (projection of $u$ on constants) tells how much we are scaling $\SS^n$ when deforming it into $\pa\Om$. For the isoperimetric inequality, minimizing perimeter with a fixed volume constraint means that the effect of scaling must be negligible, that is $\int_{\SS^n}u={\rm O}(\int_{\SS^n}u^2)$. By contrast, for Almgren's isoperimetric principle, the only constraint on $\int_{\SS^n}u$ is on its sign (which must be non-negative, as one must scale outward in order to preserve the condition $H_{\Omega} \leq n$) but not on its size. For Alexandrov's theorem we have no sign restriction, and $|\int_{\SS^n}u|$ is just controlled by the oscillation of the mean curvature from the constant value $n=H_{B_1}$.

With all this in mind, it seems unlikely that one can directly address stability for Almgren's isoperimetric principle from stability for the isoperimetric inequality or for Alexandrov's theorem. It is however possible to use stability for Almgren's isoperimetric principle to understand stability for Alexandrov's theorem, as we illustrate in the next section.}
\end{remark}

\subsection{A sharp estimate for boundaries with almost constant mean curvature}\label{section alex intro} Here we introduce a sharp stability result for Alexandrov's theorem. We address the issue under a global assumption aimed at preventing bubbling, as opposed to the local assumption of a uniform exterior/interior ball condition considered in \cite{ciraolovezzoni}. The assumption we make is that our sets $\Om$, after setting $H^0_\Om=n$ by scaling (recall \eqref{H0Omega}), satisfies $P(\Om)<2\,P(B_1)$. We show then that the constant mean curvature deficit $\de_{{\rm cmc}}(\Om)$ (defined in \eqref{alexandrov deficit}) controls linearly the Hausdorff asymmetry of $\Om$. We thus arrive to the same conclusion of \eqref{ciraolovezzoni inq} coming from a different direction.

\begin{theorem}
  \label{thm alex} If $\tau\in(0,1)$, $n\ge 1$, $\Om$ is a bounded open set in $\R^{n+1}$ with smooth boundary such that
  \[
  H_\Om^0=n\qquad P(\Om)\le 2\,\tau\,P(B_1)\qquad \de_{{\rm cmc}}(\Om)\le\de_0(n,\tau)
  \]
  then there exists $u\in C^{1,1}(\SS^n)$ such that, up to a translation, $\pa\Om=\{(1+u(x))\,x:x\in\SS^n\}$ with
  \[
  \|u\|_{C^1(\SS^n)} \le C(n)\,\de_{{\rm cmc}}(\Om)\,.
  \]
\end{theorem}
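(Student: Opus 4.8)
The plan is to reduce, via the bubbling classification of \cite{ciraolomaggi}, to the situation in which $\pa\Om$ is already a small normal graph over $\SS^n$, and then to promote the resulting (non-sharp) closeness to a ball to the sharp \emph{linear} estimate by an $L^p$ bootstrap built on the spectral gap of the linearized mean curvature operator $-\Delta_{\SS^n}-n$.

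\emph{Step 1: reduction to a nearly spherical set.} From $H^0_\Om=n$ one has $P(\Om)=(n+1)|\Om|$, which together with the isoperimetric inequality \eqref{isoperimetric inequality} gives $|\Om|\ge|B_1|$, hence $\de(\Om)=P(\Om)-P(B_1)\ge0$; also $\|H_\Om-n\|_{L^\infty(\pa\Om)}=n\,\de_{{\rm cmc}}(\Om)$. Since $\tau<1$ we have $P(\Om)\le 2\tau\,P(B_1)<2\,P(B_1)$, so the bubbling theorem of \cite{ciraolomaggi} applies with ``$L=1$'': if $\de_{{\rm cmc}}(\Om)\le\de_0(n,\tau)$ then $\Om$ is Hausdorff- and $L^1$-close to a single unit ball. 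As $\pa\Om$ is smooth and connected ($\Om$ has no holes, since $H_\Om\ge n(1-\de_{{\rm cmc}}(\Om))>0$ rules out a bounded complementary component, whose boundary would be a closed hypersurface of everywhere negative mean curvature), and $\|H_\Om\|_{L^\infty}\le n(1+\de_{{\rm cmc}}(\Om))$ is bounded, this closeness upgrades to a global representation
\[
\pa\Om=\{(1+u(x))\,x:x\in\SS^n\},\qquad u\in C^\infty(\SS^n),\qquad \|u\|_{C^1(\SS^n)}\le C_0(n,\tau)\,\de_{{\rm cmc}}(\Om)^\a,
\]
with $\a=\a(n)\in(0,1)$; in particular $\|u\|_{C^1(\SS^n)}$ is as small as we like once $\de_0(n,\tau)$ is. (One could instead reach this point through the paper's own Theorems~\ref{thm structure}(ii) and \ref{thm sharp u}, applied to the rescaling $\Om'=(1+\de_{{\rm cmc}}(\Om))\Om$, which has $H_{\Om'}\le n$ and, being free of holes and inner tentacles, is covered by that machinery once $\de(\Om')$ is known to be small --- which again uses $P(\Om)<2P(B_1)$ to exclude bubbling.) Finally, since $\ker(\Delta_{\SS^n}+n)=\mathcal K$, the space of first spherical harmonics, and a small translation of $\pa\Om$ perturbs $u$ to first order by an element of $\mathcal K$, the implicit function theorem allows one more translation of size $O(\|u\|_{C^1(\SS^n)})$ after which $\Pi_{\mathcal K}u=0$, where $\Pi_{\mathcal K}$ is the $L^2(\SS^n)$-projection onto $\mathcal K$; the new $u$ is still $C^1$-small.

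\emph{Step 2: the elliptic bootstrap.} Set $F_u(x)=(1+u(x))x$. The mean curvature of $\pa\Om$ pulled back to $\SS^n$ is $H_\Om\circ F_u=\mathcal H[u]$ for a smooth quasilinear second-order operator $\mathcal H$ with $\mathcal H[0]=n$, uniformly elliptic while $\|u\|_{C^1}$ is small, and with linearization $-(\Delta_{\SS^n}+n)$ at $u=0$. Hence
\[
\Delta_{\SS^n}u+nu=(n-H_\Om\circ F_u)+\mathcal Q[u],\qquad \mathcal Q[u]:=\mathcal H[u]-n+\Delta_{\SS^n}u+nu,
\]
with $\|n-H_\Om\circ F_u\|_{L^\infty(\SS^n)}=n\,\de_{{\rm cmc}}(\Om)$ and, by Taylor expansion and quasilinearity, $\|\mathcal Q[u]\|_{L^p(\SS^n)}\le C(n,p)\|u\|_{C^1(\SS^n)}(\|u\|_{W^{2,p}(\SS^n)}+\|u\|_{C^1(\SS^n)})$ for $1<p<\infty$. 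Because $\Pi_{\mathcal K}u=0$ (so that $\Pi_{\mathcal K}$ of the right-hand side vanishes too) and $n$ is an eigenvalue of $-\Delta_{\SS^n}$ only on $\mathcal K$, the $L^p$-estimate for $\Delta_{\SS^n}+n$ restricted to $\ker\Pi_{\mathcal K}$ gives, for a fixed $p>n$,
\[
\|u\|_{W^{2,p}(\SS^n)}\le C(n,p)\Big(\de_{{\rm cmc}}(\Om)+\|u\|_{C^1(\SS^n)}\,\|u\|_{W^{2,p}(\SS^n)}\Big),
\]
using $\|u\|_{C^1}\le C(n,p)\|u\|_{W^{2,p}}$ to absorb the quadratic-in-$C^1$ term. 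Since $u\in C^\infty(\SS^n)$ makes $\|u\|_{W^{2,p}(\SS^n)}$ finite, shrinking $\de_0(n,\tau)$ so that $C(n,p)\|u\|_{C^1}\le\tfrac12$ lets us absorb the last term and obtain $\|u\|_{W^{2,p}(\SS^n)}\le 2C(n,p)\,\de_{{\rm cmc}}(\Om)$; taking $p=n+1$ and using $W^{2,n+1}(\SS^n)\hookrightarrow C^1(\SS^n)$ yields $\|u\|_{C^1(\SS^n)}\le C(n)\,\de_{{\rm cmc}}(\Om)$ (and $u\in C^{1,1}(\SS^n)$ a fortiori), which is the assertion.

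\emph{Main obstacle.} The delicate part is Step~1: ruling out bubbling --- exactly where the global constraint $P(\Om)<2P(B_1)$ is used, through \cite{ciraolomaggi} (or, in the alternative, through the positivity of $H_{\Om'}$) --- and then turning $L^1$/Hausdorff closeness to a single ball into an honest global normal graph over $\SS^n$ with small $C^1$ norm, which relies on the regularity and connectedness of $\pa\Om$ and the absence of holes. After that, the bootstrap in Step~2 is a routine perturbation argument whose only subtlety, the kernel of $-\Delta_{\SS^n}-n$, is removed by the centering $\Pi_{\mathcal K}u=0$.
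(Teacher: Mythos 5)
Your argument is correct, and while its first half coincides with the paper's (both reduce to a $C^1$-small normal graph over $\SS^n$ via \cite[Theorem 2.5]{ciraolomaggi}, using $P(\Om)\le 2\tau P(B_1)<2P(B_1)$ to exclude bubbling), the second half takes a genuinely different route. The paper feeds the graph into Theorem \ref{thm alex L2}: it keeps the translation fixed by the surface barycenter $\int_{\pa\Om}x=0$, expands $u$ in spherical harmonics and shows the first-harmonic coefficient is quadratically small (estimate \eqref{b controlled by DR}), derives a $W^{1,2}$ bound, then upgrades by De Giorgi--Nash--Moser to $C^0$ and by Schauder to $C^{1,\a}$, i.e.\ conclusion \eqref{alex C1alpha}. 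You instead recenter by an implicit-function-theorem translation so that the $L^2$-projection of $u$ onto the first eigenspace $\mathcal K$ vanishes exactly, and then run a single $W^{2,p}$ bootstrap based on the invertibility of $\Delta_{\SS^n}+n$ on $\mathcal K^\perp$, absorbing the quasilinear error for $\|u\|_{C^1}$ small and concluding by $W^{2,p}\hookrightarrow C^1$ for $p>n$; this is legitimate since the theorem is stated ``up to a translation'', and it is arguably the more standard perturbation argument. What the paper's longer route buys is reusability: the same lemmas yield the integral-deficit statements \eqref{alex L2 bound} and \eqref{alex C0 bound new} of Theorem \ref{thm alex L2}, which your $L^\infty$-based absorption would not give directly, whereas your route is shorter and self-contained for the $L^\infty$ deficit. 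Two harmless slips: the projection of your right-hand side onto $\mathcal K$ vanishes automatically (since $(\Delta_{\SS^n}+n)x_i=0$ and the operator is self-adjoint), not because $\Pi_{\mathcal K}u=0$; and $W^{2,n+1}(\SS^n)$ does not embed into $C^{1,1}$ --- the $C^{1,1}$ (indeed smooth) regularity of $u$ comes from the smoothness of $\pa\Om$, with the quantitative bound only claimed in $C^1$, which is all the statement requires.
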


\begin{remark}
  {\rm The conclusion of Theorem \ref{thm alex} is sharp (think to ellipsoids with small eccentricities) and it implies in particular that
  \[
  \max\bigg\{\big|P(\Om)-P(B_1)\big|\,,\Big\|\nu_\Om-\frac{x}{|x|}\Big\|_{C^0(\pa\Om)}\,,|\Om\Delta B_1|,\hd(\pa\Om,\SS^n)\bigg\}
  \le C(n)\,\de_{{\rm cmc}}(\Om)\,.
  \]}
\end{remark}

As mentioned before, although using an $L^\infty$-deficit like $\de_{{\rm cmc}}(\Om)$ is sufficient in view of applications to capillarity theory, having in mind to address convergence to equilibrium in geometric flows (see, for example, \cite{ciraolofigallimaggi} for this kind of application of stability theorems to Yamabe-type fast diffusion equations) it would be interesting to obtain a result analogous to Theorem \ref{thm alex} with an $L^2$-deficit in place of $\de_{{\rm cmc}}(\Om)$.  In fact, without assuming pointwise bounds on the mean curvature of $\Om$, we can show that the $W^{1,2}$-distance of $\pa\Om$ to the unit sphere is bounded linearly in terms of the $L^2$-deficit $\|H_{\Om} - n\|_{L^2(\pa\Om)}$ whenever $\pa\Om$ is a sufficiently $C^1$-small perturbation of the unit sphere. Moreover, using slightly stronger integral deficits, we can also control the $C^0$-norm of $u$ in terms of the oscillation of the mean curvature.

\begin{theorem}
  \label{thm alex L2}
  If $n\ge 1$ and $\Om$ is an open set with $C^{1,1}$-boundary such that  $\int_{\pa\Om}x=0$ and $\pa \Om=\{(1+u(x))x:x\in\SS^n\}$ for a function $u\in C^{1,1}(\SS^n)$ with
  \[
  \|u\|_{C^1(\SS^n)}  \le \e(n)
  \]
  then
  \begin{equation} \label{alex L2 bound}
  \|u\|_{W^{1,2}(\SS^n)} \le C(n)\, \| H_{\Om} - n \|_{L^2(\pa\Om)}\,.
  \end{equation}
  Moreover, if
  \begin{equation}
    \label{p hp new}
      p\ge2\quad\mbox{for $n\le 3$}\,,\qquad p>\frac{n}2\quad\mbox{if $n\ge 4$}\,.
  \end{equation}
  then
  \begin{equation}
    \label{alex C0 bound new}
      \|u\|_{C^0(\SS^n)}\le C(n,p)\,\|H_\Om-n\|_{L^p(\pa\Om)}\,.
  \end{equation}
  Finally, if $\a\in(0,1)$ and $K>0$ is such that $\|u\|_{C^{1,\a}(\SS^n)}\le K$, then
  \begin{equation}
    \label{alex C1alpha}
      \|u\|_{C^{1,\a}(\SS^n)} \le C(n,K,\a)\,\de_{{\rm cmc}}(\Om)\,.
  \end{equation}
\end{theorem}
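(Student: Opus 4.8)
The plan is to linearize the mean curvature of the normal graph $\pa\Om=\{(1+u(x))x:x\in\SS^n\}$ about $u\equiv0$ and to exploit the spectral structure of the Jacobi-type operator $-\Delta_{\SS^n}-n$. The key first step is a weak linearization identity: starting from the generalized first variation $\int_{\pa\Om}\Div^{\pa\Om}X\,d\H^n=\int_{\pa\Om}(X\cdot\nu_\Om)H_\Om\,d\H^n$ (valid for $C^{1,1}$ boundaries, with $H_\Om\in L^\infty$), testing with the \emph{fixed} ambient fields $X(y)=\psi(y/|y|)\,y/|y|$ for $\psi\in C^1(\SS^n)$ (cut off near the origin), and expanding the induced first fundamental form in polar coordinates (the metric of $\{(1+u)x\}$ is $(1+u)^2g_{\SS^n}+du\otimes du$), one gets
\[
\int_{\SS^n}\big(\n u\cdot\n\psi-n\,u\,\psi\big)\,d\H^n=\int_{\SS^n}(H_\Om\circ\Phi-n)\,\psi\,d\H^n+\mathcal{R}(u,\psi)\,,\qquad\Phi(x)=(1+u(x))x\,,
\]
where, crucially, the tangential divergence of the fixed field $X$ depends on $\pa\Om$ only through its first-order data, so the remainder $\mathcal{R}$ is quadratic and contains \emph{no second derivatives of $u$}: $|\mathcal{R}(u,\psi)|\le C(n)\|u\|_{C^1(\SS^n)}\|u\|_{W^{1,2}(\SS^n)}\|\psi\|_{W^{1,2}(\SS^n)}$. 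I would also record the spectral facts for $-\Delta_{\SS^n}$ (eigenvalues $k(k+n-1)$, $k\ge0$, with eigenspaces the degree-$k$ spherical harmonics): $-\Delta_{\SS^n}-n$ has kernel spanned by the linear coordinates $x_1,\dots,x_{n+1}$, is negative on constants, and is coercive on the $L^2$-orthogonal complement of degrees $\le1$, $\int_{\SS^n}|\n w|^2-n\int_{\SS^n}w^2\ge c(n)\|w\|_{W^{1,2}}^2$. Finally, two consequences of the barycenter hypothesis: inserting $\int_{\pa\Om}x=0$ into the polar expansion of $\int_{\pa\Om}y\,d\H^n_y$ forces the degree-one part $u_1$ of $u$ to be quadratically small, $\|u_1\|_{W^{1,2}}\le C(n)\|u\|_{C^1}\|u\|_{W^{1,2}}$, while testing the identity with $\psi\equiv1$ gives $|\fint_{\SS^n}u|\le C(n)\big(\|H_\Om-n\|_{L^2(\pa\Om)}+\|u\|_{C^1}\|u\|_{W^{1,2}}\big)$.

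For \eqref{alex L2 bound} I would test the identity with $\psi=u$, decompose $u=\fint_{\SS^n}u+u_1+w$ with $w$ orthogonal to degrees $\le1$, and observe that the degree-zero and degree-one parts contribute respectively $-n|\SS^n|(\fint u)^2$ and $0$ to $\int|\n u|^2-n\int u^2$; coercivity then gives $c(n)\|w\|_{W^{1,2}}^2\le n|\SS^n|(\fint u)^2+\big|\int_{\SS^n}(H_\Om\circ\Phi-n)u\big|+|\mathcal{R}(u,u)|$. Bounding the middle term by Cauchy--Schwarz (and the two-sided bound on the polar Jacobian), inserting the estimates on $\fint u$ and $u_1$, using $\|u\|_{W^{1,2}}^2\le\|w\|_{W^{1,2}}^2+\|u_1\|_{W^{1,2}}^2+|\SS^n|(\fint u)^2$, and choosing $\e(n)$ small enough to absorb the $\|u\|_{C^1}\|u\|_{W^{1,2}}^2$ terms, a final Young inequality yields $\|u\|_{W^{1,2}}\le C(n)\|H_\Om-n\|_{L^2(\pa\Om)}$.

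For \eqref{alex C0 bound new}, since $u\in C^{1,1}$ the linearization also holds pointwise a.e. in quasilinear form $-\Delta u-nu=(H_\Om\circ\Phi-n)(1+nu)+E$ with $|E|\le C(n)\|u\|_{C^1}(|\n^2u|+|\n u|+|u|)$; writing $\tilde u=u-u_1$ (so $-\Delta\tilde u-n\tilde u$ equals the right side, automatically orthogonal to harmonics of degree $1$, on whose complement $-\Delta-n$ is invertible), elliptic $L^q$-estimates give $\|\tilde u\|_{W^{2,q}}\le C(n,q)\big[(1+\|u\|_{C^0})\|H_\Om-n\|_{L^q(\pa\Om)}+\|u\|_{C^1}(\|\tilde u\|_{W^{2,q}}+\|u\|_{W^{1,q}})\big]$. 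Absorbing the $\|u\|_{C^1}$-terms (shrinking $\e$) and iterating on $q$ along the Sobolev chain $W^{2,q}\hookrightarrow W^{1,q'}$, $1/q'=1/q-1/n$, from the base case $q=2$ supplied by \eqref{alex L2 bound}, one reaches $q=p$; the assumptions \eqref{p hp new} are exactly the conditions for $W^{2,p}(\SS^n)\hookrightarrow C^0(\SS^n)$, so adding $\|u_1\|_{C^0}\le C\|u\|_{C^1}\|u\|_{W^{1,2}}$ gives \eqref{alex C0 bound new} (one may assume $\|H_\Om-n\|_{L^p}$ small, else the bound is trivial from $\|u\|_{C^1}\le\e(n)$). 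For \eqref{alex C1alpha} I would first pass from the relative to an absolute deficit: the $C^1$-smallness of $u$ makes $H^0_\Om$ close to $n$, and one checks $\|H_\Om-n\|_{L^\infty(\pa\Om)}\le C(n)\de_{{\rm cmc}}(\Om)$ up to terms already controlled by the previous estimates; then, using the a priori bound $\|u\|_{C^{1,\a}}\le K$ to control the $C^{0,\a}$-norms of the coefficients of the quasilinear operator, a Schauder/$L^p$-bootstrap (the $L^\infty$ right-hand side yields $W^{2,p}$ for every finite $p$, hence $C^{1,\b}$ for every $\b<1$, the bound $K$ being what lets us reach the prescribed exponent $\a$) upgrades the $C^0$ control to $\|u\|_{C^{1,\a}(\SS^n)}\le C(n,K,\a)\de_{{\rm cmc}}(\Om)$.

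I expect the main obstacle to be organizing the quadratic remainders so that the estimates close: in the $W^{1,2}$ step, no second derivative of $u$ may appear except against a test function, which is precisely what the weak/divergence formulation (testing with fixed ambient fields) guarantees; in the $W^{2,p}$ bootstrap, the single surviving second-derivative term in $E$ must carry the small factor $\|u\|_{C^1}$ so as to be absorbed. Secondary points are that the iteration of elliptic estimates for \eqref{alex C0 bound new} must terminate exactly at the borderline Sobolev exponents of \eqref{p hp new} while keeping all error terms absorbable, and that converting $\de_{{\rm cmc}}(\Om)$ into an $L^\infty$ mean-curvature deficit in \eqref{alex C1alpha} costs only a dimensional constant.
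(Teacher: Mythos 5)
Your argument for \eqref{alex L2 bound} is essentially the paper's: the weak identity you derive by testing the first variation with radial fields is exactly the divergence-form formula \eqref{H formula u} of Lemma \ref{thm quant u prelim} (step three), and the rest --- decomposition into constants, degree-one harmonics and the remainder, the spectral gap $2(n+1)>n$ on the orthogonal complement, the barycenter condition making the degree-one part quadratically small, testing with $\psi\equiv1$ to control the average, then testing with $u$ and absorbing the quadratic errors for $\e(n)$ small --- reproduces the paper's proof of \eqref{alex L2 bound} step by step. Where you genuinely diverge is in the passage to \eqref{alex C0 bound new} and \eqref{alex C1alpha}. The paper stays in divergence form and works locally on graphs: \eqref{alex C0 bound new} is obtained by combining \eqref{alex L2 bound} with the De Giorgi--Nash--Moser estimate \eqref{stima after} of Lemma \ref{thm quant u prelim 2} (which needs only $\|u\|_{L^2}$ and $\|H_\Om-n\|_{L^q}$, $q>n/2$, and never touches second derivatives), and \eqref{alex C1alpha} by the divergence-form H\"older-gradient estimate \eqref{stima after holder}. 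You instead write the equation in non-divergence form on $\SS^n$ and run a global Calder\'on--Zygmund/Sobolev bootstrap $W^{2,2}\to W^{2,p}\hookrightarrow C^0$, absorbing the $\|u\|_{C^1}\,|\nabla^2u|$ error; this is legitimate because $u\in C^{1,1}=W^{2,\infty}$ makes all norms finite a priori, but it is heavier than the paper's route and makes essential use of the $C^{1,1}$ regularity, which the divergence-form argument does not. Two small corrections to your accounting: the restriction $p\ge2$ for $n\le3$ in \eqref{p hp new} is not forced by the embedding $W^{2,p}(\SS^n)\hookrightarrow C^0(\SS^n)$ (which only needs $p>n/2$) but by the need to dominate the $L^2$-norm of $H_\Om-n$ from the base case \eqref{alex L2 bound} by its $L^p$-norm; and the subtraction of $u_1$ before inverting $-\Delta-n$ is unnecessary, since the standard estimate $\|v\|_{W^{2,q}}\le C(\|\Delta v\|_{L^q}+\|v\|_{L^q})$ suffices along the iteration.

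On \eqref{alex C1alpha}, note that your reduction ``$\|H_\Om-n\|_{L^\infty(\pa\Om)}\le C(n)\,\de_{{\rm cmc}}(\Om)$'' does not follow from $C^1$-smallness of $u$ alone: one has $H_\Om-n=H^0_\Om\,(H_\Om/H^0_\Om-1)+(H^0_\Om-n)$, and $|H^0_\Om-n|$ is not controlled by $\de_{{\rm cmc}}(\Om)$ without a normalization (a ball $B_r$ centered at the origin, $r\ne1$, has $\de_{{\rm cmc}}=0$ but $u\equiv r-1$). The paper is equally terse on this point; the estimate is applied in Theorem \ref{thm alex} under the normalization $H^0_\Om=n$, where the conversion is immediate, and your proof should either assume that normalization or phrase \eqref{alex C1alpha} with $\|H_\Om-n\|_{L^\infty(\pa\Om)}$ on the right-hand side, as in \eqref{stima after holder}. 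Apart from this shared caveat, your proposal is correct in outline.
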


\subsection{Organization of the paper}\label{section organization} After introducing some notation and basic facts in section \ref{section notation}, in section \ref{section structure and reduction} we discuss the structure of sets with small Almgren's deficit and prove Theorem \ref{thm structure}. Section \ref{section normal def of SSn} is devoted to the study of normal deformations of $\SS^n$. There we obtain the various estimates from Theorem \ref{thm sharp u} (whose optimality is addressed in section \ref{section example}) which we use to complete the proof of Theorem \ref{thm main}. Finally, the applications to boundaries with almost constant mean curvature is discussed in section \ref{section sharp alexandrov}, where Theorem \ref{thm alex} and Theorem \ref{thm alex L2} are proved.

\medskip

\noindent {\bf Acknowledgment:} This work was supported by NSF through grants DMS Grant 1265910 and the DMS FRG Grant 1361122.

\section{Notation and terminology}\label{section notation} Here we gather some definitions and facts that are used throughout the paper. We refer to \cite{maggiBOOK}, and point out \cite{SimonLN,AFP,FedererBOOK,KrantzParks,GMSbook1,GMSbook2} as additional references.

\medskip

\noindent {\bf Rectifiable sets and mean curvature:} A Borel set $S\subset\R^{n+1}$ is {\it locally $\H^n$-rectifiable in $\R^{n+1}$} if there exists a family of maps $\{f_h\}_{h\in\N}\subset C^1(\R^n;\R^{n+1})$
\[
\H^n\Big(S\setminus\bigcup_{h\in\N}f_h(\R^n)\Big)=0
\]
and $\H^n(S\cap B_R)<\infty$ for every $R>0$. In particular, $\H^n\llcorner S$ is a Radon measure on $\R^{n+1}$. If $S$ is locally $\H^n$-rectifiable, then $S$ admits an {\it approximate tangent space} $T_xS$ at $\H^n$-a.e. $x\in S$, that is $T_xS$ is an hyperplane in $\R^{n+1}$ with the property that
\[
\lim_{r\to 0^+}\frac1{r^n}\int_S \vphi\Big(\frac{y-x}r\Big)\,d\H^n(y)=\int_{T_xS} \vphi\,d\H^n\qquad\forall \vphi\in C^0_c(\R^{n+1})\,;
\]
see e.g. \cite[Theorem 10.2]{maggiBOOK}. If for every such $x\in S$ we denote by $\nu(x)$ a unit normal vector to $T_xS$, then for every $X\in C^1_c(\R^{n+1};\R^{n+1})$ the formula
\[
\Div^SX(x)=\Div(X)(x)-\nabla X(x)[\nu(x)]\cdot\nu(x)
\]
defines a Borel map on $S$. The vector-valued distribution $\vec H_S$
\[
\langle\vec H_S,X\rangle=\int_S\,\Div^S X\,d\H^n\qquad X\in C^1_c(\R^{n+1};\R^{n+1})
\]
is called the {\it distributional mean curvature} of $S$. We say that $S$ has {\it generalized mean curvature} if, given a Borel map $\nu:S\to\SS^n$ such that $\nu(x)$ is normal to $T_xS$ for $\H^n$-a.e. $x\in S$, there exists $H_S\in L^1_{{\rm loc}}(\H^n\llcorner S)$ such that
\[
\langle\vec H_S,X\rangle=\int_S\,X\cdot\nu\,H_S\,d\H^n\qquad\forall X\in C^1_c(\R^{n+1};\R^{n+1})\,.
\]
Then $H_S$ is the {\it scalar mean curvature of $S$} with respect to $\nu$. If we have $H_S\in L^\infty(\H^n\llcorner S)$, then $S$ has {\it generalized bounded mean curvature}.

\medskip

\noindent {\bf Sets of finite perimeter:} A Borel set $\Om\subset\R^{n+1}$ is of finite perimeter in $\R^{n+1}$ if there exists a $\R^{n+1}$-valued Radon measure $\mu_\Om$ on $\R^{n+1}$ such that
\begin{equation}
  \label{div thm}
  \int_{\Om}\Div\,X(x)\,dx=\int_{\R^{n+1}}X\cdot\,d\mu_\Om\qquad\forall X\in C^1_c(\R^{n+1};\R^{n+1})\,.
\end{equation}
If $|\mu_\Om|$ denotes the total variation of $\mu_\Om$, then the set $\pa^*\Om$ of those $x\in\R^{n+1}$ such that
\begin{equation}
  \label{nuOm}
  \lim_{r\to 0^+}\frac{\mu_\Om(B_r(x))}{|\mu_\Om|(B_r(x))}
\end{equation}
exists and belongs to $\SS^n$, is called the {\it reduced boundary of $\Om$}, and the limit $\nu_\Om(x)\in\SS^n$ in \eqref{nuOm} is called the {\it measure-theoretic outer unit normal to $\Om$}. One always has that $\pa^*\Om$ is a locally $\H^n$-rectifiable set and that $T_x(\pa^*\Om)$ exists for every $x\in\pa^*\Om$ with $T_x(\pa^*\Om)=\nu_\Om(x)^\perp$; moreover, $\mu_\Om=\nu_\Om\,\H^n\llcorner\pa^*\Om$, so that \eqref{div thm} takes the form
\[
  \int_{\Om}\Div\,X(x)\,dx=\int_{\pa^*\Om}X\cdot\nu_\Om\,d\H^n\qquad\forall  X\in C^1_c(\R^{n+1};\R^{n+1})\,;
\]
see \cite [Chapter 15]{maggiBOOK}. If $|\Om\Delta\Om'|=0$, then clearly $\mu_{\Om}=\mu_{\Om'}$ and $\pa^*\Om=\pa^*\Om'$, although the topological boundaries of $\Om$ and $\Om'$ may largely differ. However, up to replace $\Om$ with an $\Om'$ such that $|\Om\Delta\Om'|=0$, it is always possible to obtain
\[
\pa\Om=\big\{x\in\R^{n+1}:0<|\Om\cap B_r(x)|<|B_r(x)|\quad\forall r>0\big\}=\spt\mu_\Om=\ov{\pa^*\Om}\,;
\]
see \cite[Chapter 12]{maggiBOOK}. We shall always assume that our sets of finite perimeter have been normalized so that these identities are in force. Given a Borel set $\Om\subset\R^{n+1}$ and $t\in[0,1]$, we set
\[
\Om^{(t)}=\Big\{x\in\R^{n+1}:\lim_{r\to 0^+}\frac{|\Om\cap B_r(x)|}{|B_r(x)|}=t\Big\}
\]
for the set of points of density $t$ of $\Om$. If $\Om$ is a set of locally finite perimeter in $\R^{n+1}$, then by a result of Federer \cite[Theorem 16.2]{maggiBOOK} we have
\[
\R^{n+1}\underset{\H^n}{=}\Om^{(0)}\cup\Om^{(1)}\cup\pa^*\Om
\]
where
\[
A\underset{\H^n}{=}B\qquad\mbox{means}\qquad \H^n(A\Delta B)=0\,.
\]
A set of finite perimeter $\Om$ has {\it generalized mean curvature} $H$ in an open set $A\subset\R^{n+1}$ if $H\in L^1_{{\rm loc}}(\H^n\llcorner(A\cap\pa^*\Om))$ is such that
\[
\int_{\pa^*\Om}\,\Div^{\pa^*\Om}X\,d\H^n=\int_{\pa^*\Om}\,X\cdot\nu_\Om\,H\,d\H^n\qquad\forall X\in C^1_c(A;\R^{n+1})\,.
\]
In this case, $H_\Om$ is uniquely determined ($\H^n$-a.e. on $A\cap\pa^*\Om$) and we set $H=H_\Om$. Notice that {\it with this convention $H_\Om\ge0$ for smooth convex sets and $H_\Om=n$ if $\Om$ is a ball of unit radius}. When $H_\Om\in L^\infty(\H^n\llcorner(A\cap\pa^*\Om))$ we say that $\Om$ has {\it generalized bounded mean curvature in $A$}. An important example to keep in mind in our analysis is the following: if $\Om$ is an open set in $\R^{n+1}$ which, nearby $0\in\pa\Om$, is the epigraph in the $e_{n+1}$-direction of a function $u\in C^{1,1}(D)$ with $u(0)=0$ and $D$ a ball in $\R^n$ centered at $0$, then $\Om$ has generalized bounded mean curvature in said neighborhood of $0$, and
\begin{equation}
  \label{formula HOm for Om C11}
  H_\Om(x+u(x)\,e_{n+1})=\frac{\Delta u(x)}{\sqrt{1+|\nabla u(x)|^2}}-\frac{\nabla^2u(x)[\nabla u(x)]\cdot\nabla u(x)}{(1+|\nabla u(x)|^2)^{3/2}}
\end{equation}
for a.e. $x\in D$. Here $\nabla^2 u$ stands the distributional gradient of $u\in C^{1,1}(D)=W^{2,\infty}(D)$, so that $\nabla^2u(x)$ is indeed uniquely determined a.e. on $D$.

\medskip

\noindent {\it Perimeter almost-minimizers}: We say that $E\subset\R^{n+1}$ is a perimeter $(\Lambda,r_0,\a)$-minimizer in some open set $A$ if for some $\a\in(0,1)$
\[
P(E;W)\le P(F;W)+\Lambda\,r^{n+2\a}
\]
whenever $E\Delta F\cc W\cc A$ with $\diam(W)=r<r_0$. In this context, the classical $\e$-regularity theorem and dimension reduction scheme lead to the following statement: if $E$ is a $(\Lambda,r_0,\a)$-minimizer in $A$, then there exists a set $\S\subset A\cap\pa E$, relatively closed in $A$, such that $E$ is an open set with $C^{1,\a}$-boundary in $A\setminus\S$ and the Hausdorff dimension of $\S$ is at most $n-7$ (actually, $\S$ is locally finite in $A$ if $n=7$, and $\S=\emptyset$ if $n\le 6$); see, e.g. \cite{Tamaniniholder,maggiBOOK}. The result is sharp, in the sense that every open set with $C^{1,\a}$-boundary is a $(\Lambda,r_0,\a)$-minimizer; see \cite[Section 1.6]{Tamanini}. For the reader's convenience this last fact is recalled in the following proposition, where for $x\in\R^{n+1}$, $\nu\in\SS^n$ and $r>0$ we set
\begin{equation}\label{disks and cylinders}
  \begin{split}
  &\D_r^\nu(x)=\big\{y\in\R^{n+1}:y=x+z\,,z\cdot\nu=0\,,|z|<r\big\}
  \\
  &\C_r^\nu(x)=\big\{y\in\R^{n+1}:y=x+z+t\nu\,,z\in\D_r^\nu\,,|t|<r\big\}\,.
  \end{split}
\end{equation}

\begin{proposition}\label{prop tamanini open set c1alfa}
  If $\Om$ is an open set with $C^{1,\a}$-boundary in the open set $A$, then for every $A'\cc A$ there exist $\Lambda\ge0$ and $r_0>0$ such that $\Om$ is a $(\Lambda,r_0,\a)$-minimizer in $A'$.
\end{proposition}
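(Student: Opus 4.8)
The plan is to reduce, by compactness, to a purely local statement about graphs of $C^{1,\a}$ functions, and then to compare an arbitrary competitor with such a graph through a one-dimensional (vertical) slicing argument.

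First I would localize. Since $\pa\Om$ is of class $C^{1,\a}$ in $A$ and $\pa\Om\cap\ov{A'}$ is compact, there are $\rho_0>0$ and $M=M(n,\Om,A')\ge0$ so that for every $x_0\in\pa\Om\cap\ov{A'}$, with $\nu:=\nu_\Om(x_0)$, the set $\pa\Om\cap\C_{\rho_0}^{\nu}(x_0)$ is the graph $\{x_0+z+u(z)\,\nu:z\in\D_{\rho_0}^{\nu}(x_0)\}$ of a $C^{1,\a}$ function $u$ with $u(0)=0$, $\nabla u(0)=0$, $[\nabla u]_{C^{0,\a}}\le M$, and $\Om\cap\C_{\rho_0}^{\nu}(x_0)$ is the subgraph $\{t<u(z)\}$. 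I then fix $r_0\in(0,\rho_0/3)$ with $M(2r_0)^\a\le\tfrac12$. Given $W\cc A'$ with $r:=\diam(W)<r_0$ and a competitor $F$ with $\Om\Delta F\cc W$, I may assume $P(F;W)<\infty$ and $W\cap\pa\Om\neq\emptyset$ (otherwise $P(\Om;W)=0$ and there is nothing to prove). Picking $x_0\in W\cap\pa\Om$ and $\nu=\nu_\Om(x_0)$, one gets $W\subset\ov{B_r(x_0)}\subset\C:=\C_{2r}^{\nu}(x_0)\subset\C_{\rho_0}^{\nu}(x_0)$, and in the coordinates $(z,t)$ adapted to $x_0,\nu$: $\pa\Om\cap\C=\{(z,u(z)):z\in\D_{2r}^{\nu}(x_0)\}$ (the graph stays inside $\C$ since $|u(z)|\le M|z|^{1+\a}\le M(2r)^{1+\a}\le r$), $\Om\cap\C=\{t<u(z)\}$, and $F=\Om$ on $\C\setminus W$ because $\Om\Delta F\cc W$.

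Next comes the slicing lower bound. Writing $\pi(z,t)=z$ for the projection onto $\nu^\perp$, $S_z:=\{t:x_0+z+t\,\nu\in S\}$ for slices, $L_z:=\{x_0+z+t\,\nu:t\in\R\}$, and $D^\circ:=\pi(\pa\Om\cap W)=\{z:(z,u(z))\in W\}$, I would argue that for a.e.\ $z$ the one-dimensional slice $F_z$ is a set of finite perimeter in $(-2r,2r)$ coinciding, up to a null set, with $\{t<u(z)\}$ near both endpoints of $L_z\cap\C$ --- since there $W$ is avoided and $\Om\cap\C$ is the subgraph --- so that, because $|u(z)|<2r$, $F_z$ has at least one essential jump; moreover for $z\in D^\circ$ every essential jump of $F_z$ lies over $W$, hence $\H^0(\pa^*F_z\cap W_z)\ge1$ for a.e.\ $z\in D^\circ$. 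The one-dimensional slicing identity for the directional variation $|D_\nu\chi_F|$ then gives
\[
|D^\circ|\ \le\ \int_{\nu^\perp}\H^0(\pa^*F_z\cap W_z)\,d\H^n_z\ =\ \int_{\pa^*F\cap W}|\nu_F\cdot\nu|\,d\H^n\ \le\ P(F;W)\,.
\]
On the other hand, since $\pa\Om\cap\C$ is exactly the graph of $u$ we have $\pa\Om\cap W=\{(z,u(z)):z\in D^\circ\}$, hence by the area formula, $\sqrt{1+s^2}\le1+\tfrac12 s^2$, the estimate $|\nabla u(z)|=|\nabla u(z)-\nabla u(0)|\le M|z|^\a\le M(2r)^\a$ on $D^\circ\subset\D_{2r}^{\nu}(x_0)$, and $|D^\circ|\le\omega_n(2r)^n$,
\[
P(\Om;W)=\int_{D^\circ}\sqrt{1+|\nabla u(z)|^2}\,d\H^n_z\ \le\ |D^\circ|+\frac{M^2(2r)^{2\a}}{2}|D^\circ|\ \le\ |D^\circ|+\frac{\omega_n}{2}M^2(2r)^{n+2\a}\,.
\]
Combining the two displays yields $P(\Om;W)\le P(F;W)+\Lambda\,r^{n+2\a}$ with $\Lambda:=2^{n+2\a-1}\omega_n M^2$ (finite as $\a\in(0,1)$), which is the assertion.

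I expect the main obstacle to be the localization step, and specifically the insistence that $\nabla u(0)=0$, obtained by choosing each local cylinder axis to be $\nu_\Om(x_0)$ at a \emph{contact} point $x_0\in W$: it is exactly the vanishing of $\nabla u$ at the \emph{center} of the chart that converts the H\"older bound $|\nabla u|\le M(2r)^\a$ into a gain of order $r^{2\a}$ in the area element $\sqrt{1+|\nabla u|^2}$, and hence into the $r^{n+2\a}$ error; a mere uniform smallness bound $|\nabla u|\le\e$ would only produce an $r^n$ error and would not suffice. A secondary technical point is the careful use of one-dimensional slicing of sets of finite perimeter --- identifying $|D_\nu\chi_F|(W)$ with the integral over $\nu^\perp$ of the number of essential jumps of $F_z$ --- together with the verification that those slices genuinely agree with the subgraph near the two ends of $L_z\cap\C$, which is precisely where the hypothesis $\Om\Delta F\cc W$ enters.
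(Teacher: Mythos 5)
Your argument is correct, and it follows the same two-step skeleton as the paper's proof: localize at a contact point $x_0\in W\cap\pa\Om$ with chart axis $\nu_\Om(x_0)$ so that $u(0)=0$, $\nabla u(0)=0$, and then exploit that a competitor's perimeter dominates the measure of its ``shadow'' in the direction $\nu_\Om(x_0)$, while the graph's area exceeds that shadow only by $O(r^{n+2\a})$ thanks to $|\nabla u|\le M(2r)^\a$ on the chart. The difference is in how the shadow inequality is implemented. The paper applies the divergence theorem to the cut-off constant field $\vphi\,\nu_0$ on $F\cap\C_r^{\nu_0}(x_0)$, uses the structure of $\pa^*(\C_r^{\nu_0}(x_0)\cap F)$ for a.e.\ $r$, deduces $P(F;\C_r^{\nu_0}(x_0))\ge\H^n(\D_r^{\nu_0})$, compares over the whole cylinder and finally lets $r\to s^+$. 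You instead invoke one-dimensional slicing (Vol'pert) to get $P(F;W)\ge|D^\circ|$ with $D^\circ$ the projection of $\pa\Om\cap W$, and compare only over $D^\circ$; this keeps the estimate entirely localized to $W$ (no a.e.-$r$ selection, no limiting step), at the price of quoting the slicing theory for sets of finite perimeter rather than just the divergence theorem. Both versions hinge on the same crucial choice you single out: centering the chart at a contact point so that $\nabla u$ vanishes there, which converts the H\"older seminorm into the $r^{2\a}$ gain. One point worth making explicit in your write-up: the claim that for $z\in D^\circ$ every essential jump of $F_z$ lies in $W_z$ uses the compact containment $\ov{\Om\Delta F}\subset W$ --- away from the compact slice $(\ov{\Om\Delta F})_z$ the set $F_z$ coincides with $\Om_z$, whose only jump in the cylinder is at $t=u(z)\in W_z$ --- so the claim is correct as you use it, but the compactness is what rules out spurious jumps on $\pa W_z$.
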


\begin{proof}[Proof of Proposition \ref{prop tamanini open set c1alfa}]
  By definition, the fact that $\Om$ is an open set with $C^{1,\a}$-boundary in $A$ means that for every $x_0\in A\cap\pa\Om$ there exist $r_0>0$, $\nu_0\in\SS^n$ and $u_0\in C^{1,\a}(\D_{2r_0}^{\nu_0})$ such that $u_0(0)=0$, $\nabla u_0(0)=0$, $\Lip(u_0)\le 1$ and
  \[
  \Om\cap\C_{2r_0}^{\nu_0}(x_0)=\big\{y\in\R^{n+1}:y=x_0+z+t\nu_0\,,z\in\D_{2r_0}^{\nu_0}\,,u_0(z)<t<2r_0\big\}\,.
  \]
  Since $A'\cc A$ we can consider a same value of $r_0$ for every $x_0\in A'\cap\pa\Om$, and also require that the $4\,r_0$-neighborhood of $A'$ is compactly contained in $A$.

  Now let $F$ be such that $F\Delta \Om\cc W$ for some set $W\cc A'$ with $\diam(W)=s<r_0$. Since we aim to prove
  \begin{equation}
    \label{tamanini check}
      P(\Om;W)\le P(F;W)+\Lambda\,s^{n+2\a}\,,
  \end{equation}
  we can assume that $W\cap\pa\Om\ne\emptyset$, for otherwise $P(\Om;W)=0$. Thus we can find $x_0\in A'\cap\pa\Om$ such that
  \[
  W\subset B_s(x_0)\cc \C_r^{\nu_0}(x_0)\qquad\forall r>s\,.
  \]
  Let $r\in(s,2r_0)$. By applying the divergence theorem on $F\cap\C_{r}^{\nu_0}(x_0)$ to the vector field $X(x)=\vphi(x)\,\nu_0$, where $\vphi\in C^\infty_c(A)$ with $0\le\vphi\le 1$ and $\vphi=1$ on the $4r_0$-neighborhood of $A'$, we find that
  \begin{equation}
    \label{zero divergence}
      0=\int_{F\cap \C_{r}^{\nu_0}(x_0)}\Div X=\int_{\pa^*(\C_{r}^{\nu_0}(x_0)\cap F)}\nu_0\cdot\nu_{\C_{r}^{\nu_0}(x_0)\cap F}\,.
  \end{equation}
  Now, for a.e. $r\in (s,2r_0)$ we have
  \begin{eqnarray}
    \pa^*\big(\C_{r}^{\nu_0}(x_0)\cap F\big)&\underset{\H^n}{=}&\big(\C_{r}^{\nu_0}(x_0)\cap\pa^*F\big)
    \cup\big(F^{(1)}\cap\pa\C_r^{\nu_0}(x_0)\big)
    \\
    F^{(1)}\cap\pa\C_r^{\nu_0}(x_0)&\underset{\H^n}{=}&\Om\cap\pa\C_r^{\nu_0}(x_0)
  \end{eqnarray}
  and
  \begin{equation*}
  \nu_0\cdot\nu_{\C_{r}^{\nu_0}(x_0)\cap F}(y)=\left\{\begin{split}
    &\nu_0\cdot\nu_F(y)\,,&\mbox{for $\H^n$-a.e. $y\in \C_{r}^{\nu_0}(x_0)\cap\pa^*F$}
    \\
    &1\,,&\mbox{for $\H^n$-a.e. $y\in F^{(1)}\cap\D_r^{\nu_0}(x_0+r\nu_0)$}
    \\
    &0\,,&\mbox{at $\H^n$-a.e. other point $y\in\pa^*\big(\C_{r}^{\nu_0}(x_0)\cap F\big)$}
  \end{split}\right .
  \end{equation*}
  see, e.g., \cite[Chapter 16]{maggiBOOK}. Therefore,
  \[
  \int_{\pa^*(\C_{r}^{\nu_0}(x_0)\cap F)}\vphi\,\nu_0\cdot\nu_{\C_{r}^{\nu_0}(x_0)\cap F}
  =-\H^n\big(\Om\cap\D_r^{\nu_0}(x_0+r\nu_0)\big)+
  \int_{\C_{r}^{\nu_0}(x_0)\cap \pa^*F}\nu_0\cdot\nu_F
  \]
  and \eqref{zero divergence} gives
  \[
   P(F;\C_r^{\nu_0}(x_0))\ge\H^n\big(\Om\cap\D_r^{\nu_0}(x_0+r\nu_0)\big)=\H^n(\D_r^{\nu_0})\,.
  \]
  We thus find
  \begin{eqnarray*}
    P(\Om;W)-P(F;W)&=&P(\Om;\C_r^{\nu_0}(x_0))-P(F;\C_r^{\nu_0}(x_0))
    \\
    &=&\int_{\D_r^{\nu_0}}\sqrt{1+|\nabla u_0|^2}-P(F;\C_r^{\nu_0}(x_0))
    \\
    &\le&\int_{\D_r^{\nu_0}}\big(\sqrt{1+|\nabla u_0|^2}-1\big)\le\,C(n)\,r^n\,\|\nabla u_0\|_{C^0(\D_r^{\nu_0})}^2\,.
  \end{eqnarray*}
  where in the last step we have used $\sqrt{1+|\xi|^2}-1\le |\xi|^2/2$ for every $\xi\in\R^n$. Since $\nabla u_0(0)=0$, we conclude that
  \[
  \|\nabla u_0\|_{C^0(\D_r^{\nu_0})}^2\le C\,r^{2\a}
  \]
  for a constant $C$ depending on the $\a$-H\"older semi-norm of $\nabla u_0$ on $\D_r^{\nu_0}$. Combining everything together we have proved
  \[
  P(\Om;W)\le P(F;W)+\Lambda\,r^{n+2\a}
  \]
  for a.e. $r\in(s,2r_0)$. Letting $r\to s^+$ we conclude the proof of \eqref{tamanini check}.
\end{proof}

We conclude this section with another useful technical remark.

\begin{proposition}\label{prop mean curvature from AminusSigma to A}
  If $E$ is a $(\Lambda,r_0,\a)$-minimizer in an open set $A\subset\R^{n+1}$, $\S$ is the singular set of $E$ in $A$, and $H_E$ is the generalized mean curvature of $E$ in $A\setminus\S$, then $H_E$ (extended to constantly take the value $0$ on $\S$, say) is the generalized mean curvature of $E$ in $A$.
\end{proposition}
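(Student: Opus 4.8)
The plan is to prove a \emph{removability} statement. The first--variation identity that defines $H_E$ holds on $A\setminus\S$ by hypothesis, and what must be checked is that the relatively closed set $\S\subset A$, which satisfies $\dim_{\mathcal H}\S\le n-7$ (so that $\H^{n-1}(\S\cap K)=0$ for every compact $K\cc A$, and in particular $\H^n(\S)=0$; when $n\le6$ one has $\S=\emptyset$ and there is nothing to prove), is negligible for this identity. Given $X\in C^1_c(A;\R^{n+1})$, I would multiply it by a cut--off $\zeta$ vanishing in a neighbourhood of $\S$ and equal to $1$ away from $\S$, apply the identity on $A\setminus\S$ to $\zeta X$ --- which is legitimate since $\spt(\zeta X)$ is then a compact subset of the open set $A\setminus\S$ --- and finally let $\zeta\to1$. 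The only error term this produces is $\int_{\pa^*E}(\nabla^{\pa^*E}\zeta)\cdot X\,d\H^n$, whose absolute value is bounded by $\|X\|_{C^0}\int_{\pa^*E}|\nabla\zeta|\,d\H^n$; so the whole argument reduces to finding cut--offs $\zeta$ with $\int_{\pa^*E}|\nabla\zeta|\,d\H^n$ arbitrarily small.

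For this I would use two ingredients. The first is the standard perimeter density upper bound for $(\Lambda,r_0,\a)$--minimizers: for every $A'\cc A$ there exist constants $C$ and $r_1>0$ (depending on $n,\Lambda,r_0,\a,A',A$) with $\H^n(\pa^*E\cap B_r(x))\le C\,r^n$ for all $x\in A'$ and $0<r<r_1$; this is classical and follows by comparing $E$ with $E\setminus B_r(x)$ on $B_{r'}(x)$ as $r'\downarrow r$ (see \cite{maggiBOOK}). The second is the cut--off construction: given $X$, a neighbourhood $A'\cc A$ of $\spt X$, and $\de>0$, I use $\dim_{\mathcal H}(\S\cap\ov{A'})\le n-7<n-1$ together with the compactness of $\S\cap\ov{A'}$ to cover it by finitely many balls $B_{r_i}(x_i)$ with $x_i\in A'$, $2r_i<r_1$, $B_{2r_i}(x_i)\subset A$, and $\sum_i r_i^{n-1}<\de$; then I pick $\eta_i\in C^1_c(B_{2r_i}(x_i))$ with $0\le\eta_i\le1$, $\eta_i\equiv1$ on $B_{r_i}(x_i)$, $|\nabla\eta_i|\le C/r_i$, and set $\zeta_\de=1-\max_i\eta_i$. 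Then $\zeta_\de$ is Lipschitz, takes values in $[0,1]$, vanishes on $\bigcup_i B_{r_i}(x_i)\supset\S\cap\ov{A'}$, equals $1$ off $\bigcup_i B_{2r_i}(x_i)$, and the density bound gives
\[
\int_{\pa^*E}|\nabla\zeta_\de|\,d\H^n\le\sum_i\frac{C}{r_i}\,\H^n\big(\pa^*E\cap B_{2r_i}(x_i)\big)\le C\,\sum_i r_i^{n-1}<C\,\de\,.
\]
(To have $\zeta_\de\in C^1$ one mollifies; alternatively, the identity defining $H_E$ passes to Lipschitz test fields by approximation.) By construction $\spt(\zeta_\de X)$ is a compact subset of $A\setminus\S$.

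Then I would pass to the limit. For each $\de$, applying the identity in $A\setminus\S$ to $\zeta_\de X$ and using $\Div^{\pa^*E}(\zeta_\de X)=\zeta_\de\,\Div^{\pa^*E}X+(\nabla^{\pa^*E}\zeta_\de)\cdot X$ (with $|\nabla^{\pa^*E}\zeta_\de|\le|\nabla\zeta_\de|$),
\[
\int_{\pa^*E}\zeta_\de\,\Div^{\pa^*E}X\,d\H^n+\int_{\pa^*E}(\nabla^{\pa^*E}\zeta_\de)\cdot X\,d\H^n=\int_{\pa^*E}\zeta_\de\,(X\cdot\nu_E)\,H_E\,d\H^n\,.
\]
As $\de\to0^+$: since $\H^n(\S)=0$ we have $\zeta_\de\to1$ $\H^n$--a.e.\ on $\pa^*E$, so the first left--hand term tends to $\int_{\pa^*E}\Div^{\pa^*E}X\,d\H^n$ by dominated convergence; the second tends to $0$ by the previous estimate; and the right--hand term tends to $\int_{\pa^*E}(X\cdot\nu_E)\,H_E\,d\H^n$ by dominated convergence. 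This yields $\int_{\pa^*E}\Div^{\pa^*E}X\,d\H^n=\int_{\pa^*E}(X\cdot\nu_E)\,H_E\,d\H^n$ for every $X\in C^1_c(A;\R^{n+1})$, which is the assertion.

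The step that is not merely bookkeeping is ensuring $\int_{\pa^*E}|\nabla\zeta_\de|\,d\H^n\to0$: this is exactly where the upper density estimate and the codimension bound $\dim_{\mathcal H}\S<n-1$ come together, and it is the only real ingredient of the proof. A secondary point is the convergence of the right--hand side, which requires $H_E$ to be integrable against $\H^n\llcorner\pa^*E$ up to $\S$ --- that is, on all of $A\cap\pa^*E$ and not just on compact subsets of $A\setminus\S$; this is immediate (and is in fact part of what the conclusion records, namely that $H_E$ extended by $0$ is a generalized mean curvature on $A$ in the sense of Section~\ref{section notation}) whenever $E$ has generalized bounded mean curvature on its regular part, with an a priori bound, as is the case in every application of this proposition.
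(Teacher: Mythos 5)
Your argument is correct and is precisely the ``standard cut-off function and covering argument'' that the paper invokes and omits: you combine the codimension bound $\dim_{\mathcal H}\S\le n-7<n-1$ with the upper perimeter density estimate $\H^n(\pa^*E\cap B_r(x))\le C\,r^n$ for almost-minimizers to build cut-offs $\zeta_\de$ with $\int_{\pa^*E}|\nabla\zeta_\de|\,d\H^n\to0$, and then pass to the limit in the first-variation identity. Your remark that the limit on the right-hand side needs $H_E\in L^1_{\rm loc}(\H^n\llcorner(A\cap\pa^*E))$ (automatic in the paper's applications, where $H_E$ is bounded) is a sensible precision, not a deviation from the paper's route.
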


\begin{proof}[Proof of Proposition \ref{prop mean curvature from AminusSigma to A}]
  This is based on a standard cut-off function and covering argument based on the fact that the Hausdorff dimension of $\Sigma$ is at most $n-7$ and, by $(\Lambda,r_0,\alpha)$-minimality in $A$, $P(E;B_r(x_0)) \leq C(n,\Lambda,r_0,\alpha) \, r^n$ for any ball $B_r(x_0) \cc A$.  We omit the details.
\end{proof}

\section{Structure of sets with small $\de(\Om)$ and reduction to normal graphs}\label{section structure and reduction} This section is devoted to discussing the reduction to normal graphs over $\SS^n$ in the stability problem for Almgren's isoperimetric principle. There is a first interesting observation, which is based on the simple idea of applying Almgren's principle to each connected component of $\pa\Om$, and leads to a sharp structural decomposition result under the quite explicit assumption that $\de(\Om)<P(B_1)$, or, equivalently, that $P(\Om)<2\,P(B_1)$. This argument is presented in Proposition \ref{prop reduction connected} below. This result allows one to focus on the case of a simply connected set $\Om$ with connected boundary. The mean curvature of $\pa\Om$ is bounded from above, but not from below. This is unavoidable, even with arbitrarily small deficit. However, we can construct a subset $E$ of $\Om$, whose boundary has bounded mean curvature and largely overlaps with $\pa\Om$. If $\de(\Om)$ is small enough, $\pa E$ will be a normal graph over $\SS^n$, described by a function $u$ with small $C^1$ norm. For this kind of boundaries we can obtain a sharp stability theory by mixing spectral analysis, elliptic regularity, and interpolation inequalities, see section \ref{section normal def of SSn}. The construction of $E$, starting from $\Om$ with small deficit, is discussed in Proposition \ref{prop E construction} below. It is based on the regularity theories for perimeter almost-minimizers and for free-boundary problems. This result seems to have an independent interest, as it should be applicable to other variational problems where one needs to truncate mean curvature.

\subsection{Applying Almgren's principle to the components of a boundary} Here we prove the following proposition, which takes care of the first part of the statement of Theorem \ref{thm structure}.

\begin{proposition}\label{prop reduction connected}
  If $\Om$ is an open bounded set in $\R^{n+1}$ with smooth boundary such that $H_\Om\le n$ and $\de(\Om)<P(B_1)$, then there exists an open bounded connected set $\Om^*$ with smooth, connected boundary such that
  \[
  |\Om^*\setminus \Om|\le C(n)\,\de(\Om)^{(n+1)/n}\qquad \Om\subset\Om^*\,.
  \]
  Moreover, $\pa\Om^*\subset \pa\Om$, so that, in particular,
  \[
  \mbox{$H_{\Om^*}\le n$ and $\de(\Om^*)\le\de(\Om)$}\,.
  \]
\end{proposition}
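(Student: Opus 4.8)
The plan is to run Almgren's principle \eqref{almgren principle} on the ``outermost'' pieces of $\pa\Om$ and to control the filled-in volume by the Euclidean isoperimetric inequality \eqref{isoperimetric inequality}. Since $\Om$ is bounded, open and has smooth boundary, $\pa\Om$ is a finite disjoint union of compact connected smooth hypersurfaces $\Gamma_1,\dots,\Gamma_N$, and by Jordan--Brouwer separation each $\Gamma_i$ is the boundary of a bounded open connected region $A_i$; for $i\ne j$ the open sets $A_i,A_j$ are either nested or disjoint, since their boundaries are disjoint and connected. Call $\Gamma_i$ \emph{outermost} if $A_i$ is not contained in any $A_j$ with $j\ne i$.

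First I would show that $\Om$ is connected. Let $W$ be a connected component of $\Om$. Because $\pa\Om$ is smooth, $\Om$ coincides with $W$ near each point of $\pa W$, so $\pa W$ is a union of some of the $\Gamma_i$'s; a short separation argument (using that $W$ is connected and bounded) shows that exactly one of these, $\Gamma_{i(W)}$, is outermost relative to $W$, that $W\subset A_{i(W)}$, and that $W=A_{i(W)}$ near $\Gamma_{i(W)}$. In particular the outer unit normals of $A_{i(W)}$ and of $\Om$ coincide along $\Gamma_{i(W)}$, so $H_{A_{i(W)}}\le n$ on $\pa A_{i(W)}$; since $A_{i(W)}$ is a bounded open set with smooth boundary, \eqref{almgren principle} gives $\H^n(\Gamma_{i(W)})=P(A_{i(W)})\ge P(B_1)$. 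Distinct components of $\Om$ have disjoint boundaries (smoothness of $\pa\Om$ again), hence yield distinct $\Gamma_{i(W)}$'s, so $P(\Om)=\H^n(\pa\Om)\ge k\,P(B_1)$, where $k$ is the number of connected components of $\Om$. As $\de(\Om)<P(B_1)$ means $P(\Om)<2\,P(B_1)$, we get $k=1$, i.e. $\Om$ is connected.

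Next I would set $\Om^*:=A_\Gamma$, where $\Gamma$ is the now unique outermost boundary component of $\Om$. Then $\Om^*$ is bounded, open and connected, $\pa\Om^*=\Gamma$ is a smooth connected hypersurface contained in $\pa\Om$, and $\Om\subset\Om^*$; as above the normals of $\Om$ and $\Om^*$ agree along $\Gamma$, so $H_{\Om^*}\le n$, and $P(\Om^*)=\H^n(\Gamma)\le\H^n(\pa\Om)=P(\Om)$, whence $\de(\Om^*)\le\de(\Om)$; one also gets $\diam(\Om^*)=\diam(\Om)$, since $\Gamma\subset\cl\Om$ while $\Om^*\subset\co(\Gamma)$ by a separating-hyperplane argument. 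For the volume bound, consider the open set $V:=\Om^*\setminus\cl\Om$, so that $|\Om^*\setminus\Om|=|V|$. Its topological boundary lies in $\pa\Om\setminus\Gamma$: indeed $\pa V\subset\pa\Om^*\cup\pa\Om=\pa\Om$, and near $\Gamma$ one has $\Om=\Om^*$, so $\cl V$ avoids $\Gamma$. Hence $V$ is a set of finite perimeter with $P(V)\le\H^n(\pa\Om\setminus\Gamma)=P(\Om)-\H^n(\Gamma)=P(\Om)-P(\Om^*)$, and since $P(\Om^*)\ge P(B_1)$ by \eqref{almgren principle}, $P(V)\le P(\Om)-P(B_1)=\de(\Om)$. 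The isoperimetric inequality \eqref{isoperimetric inequality} then gives $|V|\le|B_1|\,\big(P(V)/P(B_1)\big)^{(n+1)/n}\le C(n)\,\de(\Om)^{(n+1)/n}$, as desired.

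The genuinely new input is just the two applications of Almgren's principle combined with the perimeter count and the isoperimetric inequality; the main thing to carry out carefully is the topological bookkeeping in the second paragraph (existence and uniqueness of an outermost boundary component of each component of $\Om$, disjointness of the boundaries of distinct components, and the local coincidence $\Om=\Om^*$ near $\Gamma$), all of which is routine for sets with smooth boundary via Jordan--Brouwer separation.
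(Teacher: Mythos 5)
Your proof is correct and follows essentially the same route as the paper: Almgren's principle applied componentwise forces connectedness, the region enclosed by the unique outer boundary component serves as $\Om^*$, and the filled-in volume is controlled by the perimeter accounting $P(\Om)-\H^n(\pa\Om^*)\le\de(\Om)$ combined with the isoperimetric inequality. The only differences are cosmetic: you identify the outer component via maximality of the enclosed regions rather than the normal-orientation dichotomy plus a sliding-plane argument, and you apply the isoperimetric inequality once to $\Om^*\setminus\cl\Om$ instead of hole by hole.
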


\begin{proof}[Proof of Proposition \ref{prop reduction connected}]
Let $\{A^j\}_{j\in J}$ be the family of the connected components of $\Om$. Clearly we can apply \eqref{almgren principle} to each $A^j$. As a consequence
\[
P(\Om)=\sum_{j\in J}P(A^j)\ge \#J\,P(B_1)
\]
so that if $\de(\Om)<P(B_1)$, then $\#\,J=1$. In other words, $\Om$ is connected.

Now let $\{S^i\}_{i\in I}$ be the family of the connected components of $\pa\Om$. Each $S^i$ is a compact, connected, orientable hypersurface in $\R^{n+1}$ such that $S^i=\pa \Om^i$ for an open set with smooth boundary $\Om^i$ such that $|\Om^i|<\infty$. Now, by continuity, either $\nu_{\Om^i}=\nu_{\Om}$ on $S^i$ or $\nu_{\Om^i}=-\nu_{\Om}$ on $S^i$, and accordingly we define a partition $\{I^+,I^-\}$ of $I$. If $i\in I^+$, then the mean curvature $H_{S^i}$ of $S^i$ computed with respect to $\nu_{\Om^i}$ satisfies $H_{S^i}\le n$ on $S^i$, and thus by Almgren's isoperimetric principle
\[
\H^n(S^i)\ge P(B_1)\,.
\]
Since $\de(\Om)<P(B_1)$ this means that $\#\,I^+\le 1$. By sliding an hyperplane from infinity until it touches $S^i$ we find that $I^+\ne\emptyset$, and thus $\#\,I^+=1$. Now, if $S^*$ denotes the only element of $\{S^i\}_{i\in I^+}$, and $\Om^*$ is the bounded open set with finite volume such that $\nu_{\Om^*}=\nu_{\Om}$, then
\[
\Om=\Om^*\setminus\bigcup_{i\in I^-}\Om^i\subset\Om^*\,,
\]
and
\[
\de(\Om)=P(\Om^*)-P(B_1)+\sum_{i\in I^-}P(\Om^i)\ge\sum_{i\in I^-}P(\Om^i)\ge c(n)\,\sum_{i\in I^-}|\Om^i|^{n/(n+1)}
\]
so that
\[
|\Om^*\setminus\Om|=\sum_{i\in I^-}|\Om^i|\le C(n)\,\de(\Om)^{(n+1)/n}\,.
\]
Finally, since $\pa\Om^*$ is connected, we have that $\Om^*$ is connected.
\end{proof}

\subsection{Truncating the mean curvature of a set}  The following result is particularly useful in ``truncating the mean curvature of a set''. The result itself will probably not be surprising for experts in the obstacle problem, but we have included a detailed proof for the sake of clarity.

\begin{proposition}
  \label{prop E construction} If $\l>0$, $\a\in(0,1)$ and $\Om$ is an open bounded set with $C^{1,\a}$-boundary in $\R^{n+1}$, then there exist minimizers in the variational problem
  \begin{equation}
    \label{variational problem lambda}
      \inf\Big\{P(E)+\l\,|E|:\Om\subset E\,,|E|<\infty\Big\}\,.
  \end{equation}
  If $E_\l$ is one such minimizer, then:
  \begin{enumerate}
    \item[(i)] $E_\l$ is contained in the convex envelope of $\Om$ with $\diam(E_\l)=\diam(\Om)$ and
    \begin{equation}
      \label{Elambda distance from Omega}
      \l\,|E_\l\setminus\Om|+\H^n(\pa^*E_\l\setminus\pa\Om)\le\de(\Om)\,.
    \end{equation}
    \item[(ii)] there exists a closed set $\S\subset\pa E_\l$ such that $E_\l$ is an open set with $C^{1,\b}$-boundary in $\R^{n+1}\setminus\S$ for some $\b\in(0,1)$. In particular, $\H^n(\pa E_\l\Delta\pa^*E_\l)=0$.
    \item[(iii)] if $\Om$ has $C^{2,\b}$-boundary in $\R^{n+1}$, then $E_\l$ is an open set with $C^{1,1}$-boundary in $\R^{n+1}\setminus\S$, and  $E_\l$ has generalized bounded mean curvature in $\R^{n+1}$ satisfying
    \begin{equation}
      \label{bound on HEl}
          \|H_{E_\l}\|_{L^\infty(\pa E_\l)}\le\max\{\|(H_\Om)^+\|_{C^0(\pa\Om)},\l\}\,.
    \end{equation}
  \end{enumerate}
\end{proposition}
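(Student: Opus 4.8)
The plan is to read \eqref{variational problem lambda} as a volume-penalized obstacle problem for the perimeter, with $\Om$ playing the role of the obstacle, and to extract the stated properties of a minimizer $E_\l$ from a few elementary comparison arguments together with the regularity theories for perimeter almost-minimizers (as recalled before Proposition \ref{prop tamanini open set c1alfa}) and for obstacle problems.

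\textbf{Existence.} I would run the direct method. Given a minimizing sequence $\{E_k\}$ with $\Om\subset E_k$, I replace each $E_k$ by $E_k\cap\co\Om$: this stays admissible since $\Om\subset\co\Om$, and, as intersecting with a convex set decreases neither perimeter nor volume, the truncated sequence is still minimizing and is now equibounded in perimeter and contained in the fixed bounded set $\co\Om$. By the compactness theorem for sets of finite perimeter a subsequence converges in $L^1$ to some $E_\l\subset\co\Om$; lower semicontinuity of the perimeter, continuity of the volume (everything lives in a bounded set), and the passage to the limit of $|\Om\setminus E_k|=0$, show that $E_\l$ is admissible and minimizing.

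\textbf{Statement (i).} The competitor $E_\l\cap\co\Om$ gives, by the same truncation argument, $\l\,|E_\l\setminus\co\Om|\le0$, hence $E_\l\subset\co\Om$, and together with $\Om\subset E_\l$ this forces $\diam(E_\l)=\diam(\Om)$. For \eqref{Elambda distance from Omega} I would first test minimality against $F=\Om$, which (using $\Om\subset E_\l$) gives $\l\,|E_\l\setminus\Om|\le P(\Om)-P(E_\l)$, and then apply Almgren's identity \eqref{almgren identity} to $E_\l$ itself, whose convex envelope is again $A=\co\Om$. By convexity $H_{E_\l}\ge0$ on $\pa A\cap\pa E_\l$, so the free boundary $\pa^*E_\l\setminus\pa\Om=\{H_{E_\l}=-\l\}$ is disjoint from $\pa A$ and is therefore counted in full by the term $\H^n(\pa E_\l\setminus\pa A)$ of \eqref{almgren identity}; moreover $H_{E_\l}\le H_\Om\le n$ $\H^n$-a.e. on $\pa A\cap\pa E_\l$, which makes the remaining term of \eqref{almgren identity} non-negative; hence $P(E_\l)-P(B_1)\ge\H^n(\pa^*E_\l\setminus\pa\Om)$. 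Combining the two inequalities yields $\l\,|E_\l\setminus\Om|+\H^n(\pa^*E_\l\setminus\pa\Om)\le P(\Om)-P(B_1)=\de(\Om)$. (This step uses $H_\Om\le n$ and the structural information on $E_\l$ provided by (iii); there is no circularity, since (iii) is proved independently below.)

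\textbf{Statements (ii) and (iii): the regularity.} This is the heart of the matter. Since $\Om\subset E_\l$ forces $\pa E_\l\subset\pa\Om\cup(\R^{n+1}\setminus\overline\Om)$, away from $\overline\Om$ the obstacle is inactive and $E_\l$ minimizes $P(\,\cdot\,)+\l\,|\,\cdot\,|$ freely; hence $E_\l$ is a $(\L,r_0,1/2)$-minimizer of the perimeter there, so by the quoted theory $\pa E_\l\setminus\overline\Om$ is of class $C^{1,1/2}$ away from a closed singular set $\S$ of Hausdorff dimension at most $n-7$, and since its scalar mean curvature is the constant $-\l$, elliptic regularity upgrades this to analytic. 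Near $\pa\Om$ one genuinely faces an obstacle problem: after flattening $\pa\Om$ by a $C^{1,\a}$ (resp.\ $C^{2,\b}$) diffeomorphism, $E_\l$ becomes a constrained minimizer, above a half-space, of an elliptic integrand with $C^{0,\a}$ (resp.\ $C^{1,\b}$) coefficients, so the obstacle regularity theory gives $C^{1,\b}$ regularity of $\pa E_\l$ near $\pa\Om$ in general and the optimal $C^{1,1}$ regularity when $\pa\Om\in C^{2,\b}$; in particular $\S$ lies in $\R^{n+1}\setminus\overline\Om$ and, being $\H^n$-negligible, gives $\H^n(\pa E_\l\Delta\pa^*E_\l)=0$. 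As for \eqref{bound on HEl}, one has $H_{E_\l}=-\l$ on $\pa^*E_\l\setminus\pa\Om$; on the contact set $\pa\Om\cap\pa E_\l$ the tangential contact $\Om\subset E_\l$ forces $H_{E_\l}\le H_\Om$ $\H^n$-a.e., while $\pa E_\l$ coincides with $\pa\Om$ on its relative interior so that $H_{E_\l}=H_\Om$ there; and the outward first variation, always admissible, yields $H_{E_\l}\ge-\l$ everywhere; collecting these gives $|H_{E_\l}|\le\max\{\l,\|(H_\Om)^+\|_{C^0(\pa\Om)}\}$ $\H^n$-a.e. on $\pa E_\l$. Finally, combining the two regimes $E_\l$ is a $(\L,r_0,\a')$-perimeter minimizer in $\R^{n+1}$, and Proposition \ref{prop mean curvature from AminusSigma to A} lets one remove the negligible $\S$ and conclude that $E_\l$ has generalized bounded mean curvature in $\R^{n+1}$. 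I expect the genuinely delicate point throughout to be the obstacle-problem regularity near $\pa\Om$: proving that $\pa E_\l$ detaches from $\pa\Om$ in a $C^{1,1}$ fashion when $\pa\Om\in C^{2,\b}$, and controlling the mean curvature across the resulting free boundary.
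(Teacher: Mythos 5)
Your overall strategy coincides with the paper's: treat \eqref{variational problem lambda} as a volume-penalized obstacle problem, get existence by the direct method after truncating with the convex hull, obtain regularity from almost-minimizer/obstacle theory, and read off the curvature from first variations on the free part and tangency on the contact set. Two points deserve comment, one a genuine difference of route and one a genuine gap.

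For \eqref{Elambda distance from Omega} you apply Almgren's identity to $E_\l$ itself, which obliges you to know beforehand the regularity and the curvature structure of $E_\l$ (statement (iii), hence a $C^{2,\b}$ obstacle), to extend \eqref{almgren identity} beyond smooth boundaries, and to exclude (up to $\S$) that the free boundary meets $\pa(\co\Om)$. This can be made to work, but it is heavier than necessary and it ties (i) to (iii): at the $C^{1,\a}$ level of statement (i) your argument does not apply. The paper proves \eqref{Elambda distance from Omega} with no information on $E_\l$ at all: testing minimality against $\Om$ gives $\l\,|E_\l\setminus\Om|\le P(\Om)-P(E_\l)=P(\Om;E_\l^{(1)})-\H^n(\pa^*E_\l\setminus\pa\Om)$, and then $P(\Om;E_\l^{(1)})\le P(\Om;A^{(1)})=\H^n(\pa\Om\setminus\pa A)\le\de(\Om)$, where $A$ is the convex hull and only the last inequality uses \eqref{almgren identity}, applied to $\Om$ alone (both your argument and the paper's tacitly use $H_\Om\le n$ there, as they must for $\de(\Om)$ to control anything).

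The gap is in the regularity near the contact set. You establish almost-minimality of $E_\l$ only away from $\overline\Om$, and near $\pa\Om$ you propose to ``flatten $\pa\Om$ and apply obstacle-problem regularity''; but writing $E_\l$ as a constrained graph over a flattened obstacle presupposes that $\pa E_\l$ is already $C^1$ (a graph) near contact points, which is exactly what has to be proved -- a priori $E_\l$ is just a set of finite perimeter there. The paper's step two supplies the missing ingredient: since the obstacle $\Om$ is itself a perimeter $(\Lambda,r_0,\a)$-minimizer (Proposition \ref{prop tamanini open set c1alfa}), comparing with $F\cap\Om$ and $F\cup\Om$ and using $P(N\cap M)+P(N\cup M)\le P(N)+P(M)$ yields \eqref{tamanini 3}, i.e.\ $E_\l$ is a $(\Lambda',r_0,\min\{1/2,\a\})$-minimizer in all of $\R^{n+1}$; this gives $C^{1,\b}$ regularity up to the contact set outside a closed $\H^n$-null $\S$ and legitimizes Proposition \ref{prop mean curvature from AminusSigma to A}. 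Only then does the local graph formulation \eqref{obstacle_H_leq}--\eqref{obstacle Omega inequality} make sense, and the optimal $C^{1,1}$ regularity together with $H_{E_\l}\in\{H_\Om,-\l\}$ a.e.\ (hence \eqref{bound on HEl}) still requires the Caffarelli-type barrier argument giving quadratic detachment \eqref{caff1} plus Schauder estimates on $\{u<v\}$ -- the step you yourself flag as delicate but do not supply; a precise citation could replace it, but as written the key estimate \eqref{bound on HEl} rests on an unproved black box. Two smaller points: your claim $\S\cap\overline\Om=\emptyset$ is asserted without proof (it is also not needed, since $\H^n(\S)=0$ suffices), and ``intersecting with a convex set decreases neither perimeter nor volume'' should read ``increases neither''.
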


\begin{proof}[Proof of Proposition \ref{prop E construction}]
  {\it Step one}: We prove the existence of $E_\l$ and conclusion (i). First, we notice that the infimum in \eqref{variational problem lambda} is finite, as $\Om$ itself is a competitor with finite energy. The convex hull $A$ of $\Om$ is bounded, and energy is decreased by intersecting $E$ with $A$. Thus we can minimize over $E\subset A$, and by standard lower semicontinuity and compactness properties of perimeter, there exists at least a minimizer $E_\l$. Since $\Om\subset E_\l\subset A$ we have $\diam(E_\l)=\diam(\Om)$. By testing $E_\l$ against $\Om$ we find
  \[
  \l\,|E_\l\setminus\Om|\le P(\Om)-P(E_\l)=P(\Om;E_\l^{(1)})-\H^n(\pa^* E_\l\setminus\pa\Om)\,,
  \]
  where we have used $P(\Om)=P(\Om;E_\l^{(1)})+\H^n(\pa\Om\cap\pa^*E_\l)$. By $E_\l\subset A$ we find $E_\l^{(1)}\subset A^{(1)}$, and thus
  \[
  P(\Om;E_\l^{(1)})\le P(\Om;A^{(1)})=P(\Om)-\H^n(\pa\Om\cap\pa A)=\H^n(\pa\Om\setminus\pa A)\le\de(\Om)\,,
  \]
  thanks to \eqref{almgren identity}. This proves \eqref{Elambda distance from Omega}.

  \medskip

  \noindent {\it Step two}: We prove conclusion (ii). By Proposition \ref{prop tamanini open set c1alfa} there exist positive constants $r_0$ and $\Lambda$ such that
  \begin{equation}
    \label{omega quasiminimo}
    P(\Om;V)\le P(H;V)+\Lambda\,r^{n+2\a}
  \end{equation}
  whenever $H\Delta\Om\cc W$ with $\diam(W)=r<r_0$. Let us consider a set $F$ such that $F\Delta E_\l\cc W$ for a bounded open set $W$, and set $r=\diam(W)<r_0$. Let us assume first that
  \begin{equation}
    \label{hp W}
    \H^n(W\cap\pa\Om)=\H^n(W\cap\pa E_\l)=0\,.
  \end{equation}
  If we set $H=F\cap\Om$, then we have that
  \[
  H\Delta \Om=\Om\setminus(F\cap\Om)=\Om\setminus F\subset E_\l\setminus F\cc W
  \]
  so that if $r<r_0$, then by \eqref{omega quasiminimo}
  \[
  P(\Om;W)\le P(F\cap\Om;W)+\Lambda\,r^{n+2\a}\,.
  \]
  Since $F\cap\Om\cap W^c=E_\l\cap\Om\cap W^c=\Om\cap W^c$, thanks to \eqref{hp W} we actually have
  \[
  P(F\cap\Om)-P(\Om)=P(F\cap\Om;W)-P(\Om;W)
  \]
  and we have thus obtained
  \begin{equation}
    \label{tamanini 1}
      P(\Om)\le P(F\cap\Om)+\Lambda\,r^{n+2\a}\,.
  \end{equation}
  At the same time $F\cup\Om$ is admissible in \eqref{variational problem lambda}, thus by the general inequality
  \[
  P(N\cap M)+P(N\cup M)\le P(N)+P(M)\qquad\forall N,M\subset\R^{n+1}\,,
  \]
  we get
  \begin{equation}
    \label{tamanini 2}
      P(E_\l)\le P(F)+P(\Om)-P(F\cap\Om)+\l\,|E_\l\Delta(F\cup\Om)|\,.
  \end{equation}
  By $F\Delta E_\l\cc W$ we have $P(F)-P(E_\l)=P(F;W)-P(E_\l;W)$, while $E_\l\Delta(F\cup\Om)\subset F\Delta E_\l\cc W$ gives us $|E_\l\Delta(F\cup\Om)|\le C(n)\,r^{n+1}$, so that \eqref{tamanini 1} and \eqref{tamanini 2} imply
    \begin{equation}
    \label{tamanini 3}
      P(E_\l;W)\le P(F;W)+C(n,\l)\,r^{n+1}+\Lambda\,r^{n+2\a}\,.
  \end{equation}
  We have thus proved that $E_\l$ is a $(\Lambda',r_0,\min\{1/2,\a\})$-minimizer in $\R^{n+1}$. As a consequence there exists a closed set $\S\subset\pa E_\l$ such that $E_\l$ is an open set with $C^{1,\b}$-boundary on $\R^{n+1}\setminus\S$ for $\b=\min\{1/2,\a\}$.

  \medskip

  \noindent {\it Step three}: We prove statement (iii). By a first variation argument based on the minimality of $E_\l$ in \eqref{variational problem lambda} one finds that
  \begin{equation} \label{ob3}
	\int_{\pa E_\l}\Div^{\pa E_\l}X\ge-\l\int_{\pa E_\l}X\cdot\nu_{E_\l}
  \end{equation}
  for every $X\in C^\infty_c(\R^{n+1};\R^{n+1})$ with $X\cdot\nu_{E_\l}\ge0 $ on $\pa E_\l$: that is, $H_{E_\l}\ge-\l$ on $\pa E_\l$ in distributional sense. More precise information is found by considering the open sets
  \begin{eqnarray*}
	A_1 &=& \big\{ x \in\R^{n+1} : \mbox{$\exists r>0$ s.t. $B_r(x)\cap\pa E_\l\subset\pa\Om$} \big\}
    \\
	A_2 &=& \big\{ x \in\R^{n+1}\setminus \S_\l : \mbox{$\exists r>0$ s.t. $B_r(x)\cap\Om=\emptyset$} \big\}=\R^{n+1}\setminus\big(\S_\l\cup\ov{\Om}\big)
  \end{eqnarray*}
  Since $\Om$ has generalized bounded mean curvature in $\R^{n+1}$ (recall the discussion around \eqref{formula HOm for Om C11}), we find that $E_\l$ has generalized bounded mean curvature in $A_1$ satisfying
  \begin{equation}
    \label{ob1}
      H_{E_\l}=H_{\Om}\qquad\mbox{on $A_1\cap\pa E_\l$}\,.
  \end{equation}
  Moreover, again by a first variation argument based on its minimality in \eqref{variational problem lambda}, $E_\l$ has generalized bounded mean curvature in $A_2$ given by
  \begin{equation}
    \label{ob2}
      H_{E_\l}=-\l\qquad\mbox{on $A_2\cap\pa E_\l$}\,.
  \end{equation}
  (In particular, $E_\l$ has smooth boundary in $A_1\cup A_2$.) We now pick
  \begin{equation}\label{x0}
  x_0\in\pa E_\l\setminus(\S\cup A_1\cup A_2)\,.
  \end{equation}
  and claims that there exists $\rho>0$ such that $E_\l$ has $C^{1,1}$-boundary in $B_\rho(x_0)$ with
  \[
  \|H_{E_\l}\|_{L^\infty(B_\rho(x_0)\cap\pa E_\l)}\le \max\big\{\l,\|(H_\Om)^+\|_{C^0(B_\rho(x_0)\cap\pa\Om)}\big\}\,.
  \]
  By combining this claim with \eqref{ob1} and \eqref{ob2}, we shall conclude that $E_\l$ has generalized bounded mean curvature in $\R^{n+1}\setminus \Sigma$. Thanks to Proposition \ref{prop mean curvature from AminusSigma to A} this last fact will complete the proof of step three.

  Given $x_0$ as in \eqref{x0}, since $x_0\not\in\S$, up to a translation and a rotation (so that $x_0=0$ and $\nu_{E_\l}(0)=-e_{n+1}$) we have that there exist $r>0$ and
  \begin{eqnarray*}
  u\in C^{1,1}(\R^n)\,,\qquad u(0)=0\,,\qquad  \nabla u(0)=0\,,\qquad \Lip(u)\le 1\,,
  \\
  v\in C^{1,\b}(\R^n)\,,\qquad v(0)=0\,,\qquad\nabla v(0)=0\,,\qquad\Lip(v)\le 1\,,
  \end{eqnarray*}
  such that, setting
  \[
  \D_r(x)=\D_r^{e_{n+1}}(x)\qquad\D_r=\D_r(0)\qquad \C_r(x)=\C_r^{e_{n+1}}(x)\qquad \C_r=\C_r(0)\,,
  \]
  (see \eqref{disks and cylinders} for the notation used here) then we have
  \begin{eqnarray*}
	\Om\cap\C_r &=& \big\{ (x,x_{n+1}) \in \C_r : x_{n+1} < u(x) \big\}
  \\
	E_\l\cap\C_r &=& \big\{ (x,x_{n+1}) \in \C_r : x_{n+1} < v(x) \big\}
  \\
  \big(\pa E_\l\setminus\pa\Om\big)\cap\C_r&=&\big\{(x,v(x)) \in \C_r:v(x)>u(x)\big\}
  \\
  \pa E_\l\cap\pa\Om\cap\C_r&=&\big\{(x,v(x)) \in \C_r:v(x)=u(x)\big\}\,.
  \end{eqnarray*}
  Since $\Om\subset E_\l$ we have $u \le v$ on $\D_r$, where thanks to \eqref{ob1}, \eqref{ob2} and \eqref{ob3} it holds
  \begin{eqnarray}
    -\Div\Big(\frac{\nabla v}{\sqrt{1+|\nabla v|^2}}\Big) \geq -\l&&\qquad\mbox{weakly on $\D_r$}  \label{obstacle_H_leq}
    \\
    -\Div\Big(\frac{\nabla v}{\sqrt{1+|\nabla v|^2}}\Big) =- \l&&\qquad\mbox{strongly on $\D_r \cap \{v<u\}$} \label{obstacle_H_equal}
    \\
    -\Div\Big(\frac{\nabla u}{\sqrt{1+|\nabla u|^2}}\Big)=h&&\qquad\mbox{strongly $\D_r$}\,.\label{obstacle Omega inequality}
  \end{eqnarray}
  Here, $h(x)=H_\Om(x+u(x)\,e_{n+1})$ for every $x\in\D_r$. We now claim that there exist
  \begin{eqnarray}
    \label{s dependency}
      s_0=s_0(n,\l,r,\|\nabla^2u\|_{C^0(\D_r)})\in(0,r/4)\qquad C_0=C_0(n,\l,r,\|\nabla^2u\|_{C^0(\D_r)})
  \end{eqnarray}
  such that
  \begin{equation}
    \label{caff1}
    \sup_{\D_s(y)}(v-u)\le C_0\,s^2\qquad\forall s\in(0,s_0)\,,y\in \D_{s_0}\cap\{u=v\}\,.
  \end{equation}
  We prove this by a classical barrier argument from the regularity theory of obstacle problems, see~\cite[Theorem 2, Lemma 2]{caffarelliOPrevisited}. Our barriers will be given by spherical caps. Let us fix $y$ as in \eqref{caff1}, and set
  \begin{eqnarray}\label{xi definition}
  \xi(y)&=&y - \frac{n}{\l} \frac{\nabla u(y)}{\sqrt{1+|\nabla u(y)|^2}}
  \\\label{psi definition}
  \psi_y(x)&=&u(y)-\sqrt{(n/\l)^2 - |x-\xi(y)|^2} + \sqrt{(n/\l)^2 - |y-\xi(y)|^2}\,,\qquad x\in\R^n
  \end{eqnarray}
  so that the graph of $\psi_y$ over $\D_{n/\l}(\xi(y))$ is a half-sphere of radius $1/\l$, which is tangent to the graph of $u$ at the point $y+u(y)\,e_{n+1}$ thanks to \eqref{xi definition}: in particular
  \begin{gather}
  \label{psi tangency}
    \psi_y(y)=u(y)\qquad\nabla\psi_y(y)=\nabla u(y)\,,
    \\
    \label{psi equation}
	-\Div\Big( \frac{\nabla \psi_y}{\sqrt{1+|\nabla \psi_y|^2}} \Big) =- \l \qquad\mbox{on $\D_{n/\l}(\xi(y))$}\,.
  \end{gather}
  Notice that, thanks to \eqref{s dependency}, we can entail
  \begin{equation}
    \label{xi cont}
    \D_{s_0}(y)\subset \D_{n/2\l}(\xi(y))\cap \D_r\,.
  \end{equation}
  Indeed if $x\in\D_{s_0}(y)$, then by $y\in\D_{s_0}$, $\nabla u(0)=0$ and \eqref{s dependency} (with $s_0$ suitably small) we have
  \[
  |x-\xi(y)|\le s_0+|y-\xi(y)|\le s_0+\frac{n}\l\,|\nabla u(y)|\le s_0+\frac{n}\l\,\|\nabla^2u\|_{C^0(\D_r)}\,s_0<\frac{n}{2\l}\,.
  \]
  This guarantees that $\psi_y$ is well-defined on $\D_{s_0}(y)\subset \D_r$. We also observe that for every $s\in(0,s_0)$,
  \begin{equation} \label{obstacle_reg_eqn4}
  \psi_y - C_0 \, s^2 \leq v\qquad\mbox{on $\D_s(y)$}\,.
  \end{equation}
  Indeed if $x\in\D_s(y)$ with $s\in(0,s_0)$, then by \eqref{xi cont} and \eqref{psi tangency}
  \begin{eqnarray*}
    v(x)&\ge&u(x)\ge u(y)+\nabla u(y)\cdot(x-y)-\|\nabla^2u\|_{C^0(\D_r)}\,\frac{s^2}2
    \\
    &=&\psi_y(y)+\nabla \psi_y(y)\cdot(x-y)-\|\nabla^2u\|_{C^0(\D_r)}\,\frac{s^2}2
    \\
    &\ge&\psi_y(x)-C_0\,s^2
  \end{eqnarray*}
  since $x\in\D_{s_0}(y)\subset \D_{n/2\l}(\xi(y))$, where
  \[
  \|\nabla^2\psi_y\|_{C^0(\D_{n/2\l}(\xi(y)))}\le C(n,\l)\,.
  \]
  Thanks to \eqref{obstacle_reg_eqn4}, if we set
  \begin{equation*}
	w= v - \psi_y+ C_0 \, s^2\qquad\mbox{on $\D_s(y)$}
  \end{equation*}
  then $w\ge0$ on $\D_{s_0}(y)$. By \eqref{obstacle_H_leq} and \eqref{psi equation}, there exists a matrix-field $A\in C^{0,\b}(\D_{s_0}(y);\R^{n\times n}_{{\rm sym}})$ with
  \begin{eqnarray*}
    \|A\|_{C^{0,\b}(\D_{s_0}(y))}\le K\qquad \frac{\Id}{K}\le A(x)\le \Id\,,\quad\forall x\in\D_{s_0}(y)\,,
  \end{eqnarray*}
  (where here and in the following $K$ denotes a generic positive constant depending on $n$, $\l$, $r$, $[\nabla v]_{C^{0,\b}(\D_r)}$ and $\|\nabla^2u\|_{C^0(\D_r)}$) such that
  \begin{eqnarray}\label{obstacle_reg_eqn6}
    \Div(A\,\nabla w)\le 0\qquad\mbox{weakly on $\D_{s_0}(y)$}\,;
  \end{eqnarray}
  and, thanks to \eqref{obstacle_H_equal},
  \begin{equation}\label{obstacle_reg_eqn5}
    \Div(A\,\nabla w)=0\qquad\mbox{strongly on $\D_{s_0}(y)\cap\{u>v\}$}\,.
  \end{equation}
  Let $w_1$ be the solution to
  \begin{equation}\label{obstacle_reg_eqn7}
    \left\{\begin{split}
    &\Div(A\,\nabla w_1)= 0&\qquad\mbox{in $\D_{s}(y)$}\,,
    \\
    &w_1 = w&\qquad\mbox{on $\pa\D_{s}(y)$}\,.
    \end{split}\right .
  \end{equation}
  By the weak maximum principle, \eqref{obstacle_reg_eqn6}, \eqref{obstacle_reg_eqn7}, and $w\ge 0$ on $\D_{s_0}(y)$,
  \begin{equation*}
	0 \leq w_1 \leq w\qquad\mbox{on $\D_{s}(y)$}\,.
  \end{equation*}
  By the Harnack inequality, $w_1 \leq w$, and $u(y) = v(y) = \psi(y)$, for every $s\in(0,s_0)$ we have
  \begin{equation*}
	\sup_{\D_s(y)} w_1 \leq K \, w_1(y) \leq K\,w(y)=K\,\big( v(y) - \psi(y) + C_0 \, s^2 \big)\le K \, s^2 \,.
  \end{equation*}
  By the strong maximum principle, \eqref{obstacle_reg_eqn5}, and \eqref{obstacle_reg_eqn7}, $w_2=w-w_1$ attains its maximum over the closure of $\D_s(y)\cap\{u<v\}$ at some point $x_1\in \D_{s}(y) \cap \{u = v\}$.  By $w_2 \leq w$, $u(x_1) = v(x_1)$, $u(y) = \psi(y)$, and $\nabla u(y) = \nabla \psi(y)$,
  \begin{equation*}
	\sup_{\D_{s}(y)} w_2 = w_2(x_1) \leq w(x_1) = v(x_1) - \psi(x_1) + C_0 \, s^2 = u(x_1) - \psi(x_1) + C_0 \, s^2 \leq K \, s^2\,.
  \end{equation*}
  By combining these last two estimates we have proved \eqref{caff1}.

  Now let us pick $x\in\D_{s_0/2}\cap\{v>u\}$ and let $y$ be the closest point to $x$ in $\{u=v\}$. Since $u(0)=v(0)$, setting $s=|x-y|$ we have $s<s_0/2$. Moreover, considering that $v$ is smooth inside $\D_r\cap\{u<v\}$, by \eqref{obstacle_H_equal} we find that
  \[
  \sum_{i,j=1}^na_{i,j}\,D^2_{ij}v=\l\qquad\mbox{on $\D_r\cap\{u<v\}$}
  \]
  where
  \[
  a=\frac{(1+|\nabla v|^2)\,\Id-\nabla v\otimes\nabla v}{(1+|\nabla v|^2)^{3/2}}\,.
  \]
  In particular,
  \[
  \sum_{i,j=1}^na_{i,j}\,D^2_{ij}(v-u)=f\qquad\mbox{on $\D_r\cap\{u<v\}$}
  \]
  where
  \[
  f=\l-\sum_{i,j=1}^na_{i,j}\,D^2_{ij}u\,.
  \]
  Since $v\in C^{1,\b}(\D_r)$, the matrix field $a$ satisfies
  \begin{equation}
    \label{a C0beta}
     \|a\|_{C^0(\D_r)}\le N=N(n)\,,\qquad [a]_{C^{0,\b}(\D_r)}\le N=N(n,[\nabla v]_{C^{0,\b}(\D_r)})
  \end{equation}
  and it is uniformly elliptic on $\D_r$, with ellipticity constant independent even from the dimension $n$ thanks to $\Lip(v)\le 1$. Similarly, $f\in C^{0,\b}(\D_r)$ with
  \begin{equation}
    \label{f C0beta}
      \|f\|_{C^{0,\b}(\D_r)}\le N=N(n,\l,\|u\|_{C^{2,\beta}(\D_r)},[\nabla v]_{C^{0,\b}(\D_r)})\,.
  \end{equation}
  By Schauder's theory for equations in non-divergence form applied on $\D_s(x)\subset\D_r\cap\{u<v\}$, see \cite[Corollary 6.3]{gt} we find
  \[
  \|\nabla^2(v-u)\|_{C^0(\D_{s/2}(x))}\le N(n,\b,r^\b\,[a]_{C^{0,\b}(\D_r(x))})\,\bigg(\frac{\|v-u\|_{C^0(\D_s(x))}}{s^2}+s^{\b}\,\|f\|_{C^{0,\b}(\D_{2s}(x))}\bigg)\,.
  \]
  By combining \eqref{s dependency}, \eqref{caff1}, \eqref{a C0beta} and \eqref{f C0beta} we find in particular that
  \[
  |\nabla^2v(x)|\le C(n,\b,\l,\|u\|_{C^{2,\beta}(\D_r)},[\nabla v]_{C^{0,\b}(\D_r)})\qquad\forall x\in\D_{s_0/2}\cap\{v>u\}\,.
  \]
  Thus $v\in C^{1,1}(\D_{s_0/2})=W^{2,\infty}(\D_{s_0/2})$, so that $E_\l$ has $C^{1,1}$-boundary in $\C_{s_0/2}$ and generalized bounded mean curvature in $\C_{s_0/2}$ satisfying
  \begin{equation}
    \label{caff2}
      H_{E_\l}(x+v(x)\,e_{n+1})=-\frac{(1+|\nabla v(x)|^2)\,\Delta v(x)-\nabla^2v(x)[\nabla v(x)]\cdot\nabla v(x)}{(1+|\nabla v(x)|^2)^{3/2}}
  \end{equation}
  for a.e. $x\in\D_{s_0/2}$ (thanks to \eqref{formula HOm for Om C11}). Notice that $\nabla u=\nabla v$ on $\{u=v\}\cap\D_{s_0/2}$ and $\nabla^2u(x)=\nabla^2v(x)$ for a.e. $x\in\{u=v\}\cap \D_{s_0/2}$. In particular \eqref{caff2} gives us
  \[
  H_{E_\l}=H_\Om\qquad\mbox{$\H^n$-a.e. on $\C_{s_0/2}\cap\pa E_\l\cap\pa\Om$}
  \]
  By a covering argument, and by taking into account that $E_\l$ is smooth on $A_1\cup A_2$ with $H_{E_\l}=H_\Om$ on $A_1\cap\pa E_\l$ and $H_{E_\l}=-\l$ on $A_2\cap\pa E_\l$, we conclude that $E_\l$ has $C^{1,1}$-boundary in $\R^{n+1}\setminus\S$, generalized bounded mean curvature in $\R^{n+1}$ which satisfies $H_{E_\l}(x)\in\{H_\Om(x),-\l\}$ for $\H^n$-a.e. $x\in\pa E_\l$. Recalling \eqref{ob3}, we also have $H_{E_\l}(x)\ge-\l$
   for $\H^n$-a.e. $x\in\pa E_\l$. This completes the proof of statement (iii), thus of the proposition.
\end{proof}

%
%
%

\begin{proof}[Proof of Theorem \ref{thm structure}]
  The first part of the statement, requiring the explicit assumption $\de(\Om)< P(B_1)$ only, was proved in Proposition \ref{prop reduction connected}.
  To complete the proof of the theorem, let us consider a sequence $\{\Om_h\}_{h\in\N}$ open bounded sets in $\R^{n+1}$ with smooth boundaries, such that $H_{\Om_h}\le n$ for every $h\in\N$, and
  \[
  \de(\Om_h)=P(\Om_h)-P(B_1)\to 0\qquad\mbox{as $h\to\infty$}\,.
  \]
  Let $\{E_h\}_{h\in\N}$ be the sequence of sets associated to $\Om_h$ by Proposition \ref{prop E construction} with $\l=n$. In this way, each $E_h$ is an open set with $C^{1,1}$-boundary in $\R^{n+1}\setminus\S_h$ (where $\S_h$ are closed sets with Hausdorff dimension at most $n-7$) and bounded generalized mean curvature in $\R^{n+1}$ satisfying
  \begin{equation}
    \label{HEh bounded by n}
      \|H_{E_h}\|_{L^\infty(\pa E_h)}\le\max\big\{\|(H_{\Om_h})^+\|_{C^0(\pa\Om_h)},n\big\}=n\,.
  \end{equation}
  By the monotonicity formula for rectifiable sets with bounded generalized mean curvature \cite[Chapter 17]{SimonLN}, we have
  \[
  \H^n(B_r(x)\cap\pa E_h)\ge c(n)\,r^n\qquad\forall r<r(n)\,,
  \]
  so that, by a covering argument,
  \[
  \diam(E_h)\le C(n)\,P(E_h)\,.
  \]
  Now, by Proposition \ref{prop E construction}, $\diam(\Om_h)=\diam(E_h)$. At the same time, $\Om_h\subset E_h$ and \eqref{Elambda distance from Omega} imply
  \[
  P(E_h)=\H^n(\pa E_h\cap\pa\Om_h)+\H^n(\pa E_h\setminus\pa\Om_h)\le P(\Om_h)+\de(\Om_h)
  \]
  so that
  \[
  \limsup_{h\to\infty}P(E_h)\le P(B_1)\,,\qquad \diam(E_h)=\diam(\Om_h)\le C(n)\,.
  \]
  Let $A_h$ be the convex hull of $E_h$. Up a translation $0\in A_h$, so that
  $\nu_{A_h}(x)\cdot x\ge 0$ for every $x\in \pa A_h$, and in particular
  \begin{equation}
    \label{Eh pos}
      \nu_{E_h}(x)\cdot x\ge0\qquad\forall x\in \pa A_h\cap\pa E_h\setminus\S_h\,.
  \end{equation}
  Since $\diam(E_h)\le C(n)$ and $0\in A_h$, we have $E_h\subset B_{C(n)}$ with $P(E_h)\le P(B_1)+1$ for every $h$ large enough. By the standard compactness theorem for sets of finite perimeter, there exists a bounded set of finite perimeter $E$ such that
  \[
  |E_h\Delta E|\to 0\qquad\mbox{as $h\to\infty$}\,,
  \]
  and hence, by lower semicontinuity of perimeter
  \begin{equation} \label{area bound}
  P(E)\le  P(B_1)\,.
  \end{equation}

  We now exploit the divergence theorem
  \[
  (n+1)|E_h|=\int_{\pa E_h}x\cdot\nu_{E_h}=\int_{\pa A_h\cap\pa E_h}x\cdot\nu_{E_h}+\int_{(\pa E_h)\setminus \pa A_h}x\cdot\nu_{E_h}
  \]
  where
  \[
  \Big|\int_{(\pa E_h)\setminus \pa A_h}x\cdot\nu_{E_h}\Big|\le \diam(E_h)\,\H^n(\pa E_h\setminus \pa A_h)\le C(n)\,\de(E_h)\,.
  \]
  where in the last step we have applied Almgren's identity \eqref{almgren identity} to $E_h$. Now, by \eqref{Eh pos} and since $n\ge H_{E_h}\ge0$ on $\pa A_h\cap\pa E_h\setminus\S_h$, for every $\e>0$ we have
  \[
  \int_{\pa A_h\cap\pa E_h}x\cdot\nu_{E_h}=\int_{\pa A_h\cap\pa E_h}\frac{H_{E_h}+\e}{{H_{E_h}+\e}}\,x\cdot\nu_{E_h}
  \ge\frac1{n+\e}\int_{\pa A_h\cap\pa E_h}(H_{E_h}+\e)\,(x\cdot\nu_{E_h})\,.
  \]
  By \eqref{HEh bounded by n},
  \[
  \Big|\int_{\pa E_h\setminus \pa A_h}(H_{E_h}+\e)\,(x\cdot\nu_{E_h})\Big|\le (n+\e)\,\diam(E_h)\,\H^n(\pa E_h\setminus \pa A_h)\le C(n)\,\de(E_h)
  \]
  and thus, combining the above identities and estimates,
  \[
  (n+1)|E_h|\ge \frac1{n+\e}\int_{\pa E_h}(H_{E_h}+\e)\,(x\cdot\nu_{E_h})-C(n)\,\de(E_h)\,.
  \]
  By the tangential divergence theorem
  \[
  \int_{\pa E_h}(H_{E_h}+\e)\,(x\cdot\nu_{E_h})=n\,P(E_h)+\e\,(n+1)\,|E_h|
  \]
  so that
  \[
  (n+1)|E_h|\ge \frac{n\,P(E_h)+\e\,(n+1)\,|E_h|}{n+\e}-C(n)\,\de(E_h)\,.
  \]
  We let $\e\to 0$ and apply Almgren's principle $P(E_h)\ge P(B_1)$ to conclude that
  \[
  (n+1)|E_h|\ge P(B_1)-C(n)\,\de(E_h)=(n+1)|B_1|-C(n)\,\de(E_h)\,,
  \]
  that is
  \begin{equation}
    \label{volume bound eqn1}
    |B_1|-|E_h|\le C(n)\,\de(E_h)\,.
  \end{equation}

  Let us now assume that $|E_h|> |B_1|$ and let $\l_h=(|B_1|/|E_h|)^{1/(n+1)}$ so that $|\l_h\,E_h|=|B_1|$ and thus $P(\l_h\,E_h)\ge P(B_1)$ by the isoperimetric inequality. By $1-\l_h^n\ge 1-\l_h$ we thus find
  \begin{eqnarray}
  \nonumber
	P(E_h)-P(B_1)=(1 - \lambda_h^n) \, P(E_h) + P(\lambda_h E_h)-P(B_1)
  \ge(1 - \lambda_h) \, P(E_h)\,.
  \end{eqnarray}
  Since $P(E_h)\to P(B_1)$ we conclude that
  \[
  C(n)\,\de(E_h)\ge 1-\l_h=1-\Big(\frac{|B_1|}{|E_h|}\Big)^{1/(n+1)}
  \]
  that is $|E_h|-|B_1|\le C(n)\,\de(E_h)$. Also taking \eqref{volume bound eqn1} into account we thus find
  \[
  \big||E_h|-|B_1|\big|\le C(n)\,\de(E_h)\,.
  \]
  In particular, $|E|=|B_1|$ and thus \eqref{area bound} and the isoperimetric theorem imply that $E=B_1$ (up to a final translation). Since $\pa E=\SS^n$ is a smooth hypersurface and $|E_h\Delta E|\to 0$ as $h\to\infty$ with $\|H_{E_h}\|_{L^\infty(\pa E_h)}\le n$, by applying Allard's regularity theorem we find that $\S_h=\emptyset$ and that there are maps $u_h:\SS^n\to\R$ such that
  \[
  \pa E_h=\big\{x+u_h(x)\,x:x\in\SS^n\big\}\qquad\lim_{h\to\infty}\|u_h\|_{C^1(\SS^n)}=0\,.
  \]
  (Referring to \cite[Lemma 2.8]{ciraolomaggi} or \cite[Lemma 4.4]{CiLeMaIC1} for more details on this point, we just mention that here the idea is that of exploiting the continuity of area excess on a fixed a cylinder along sequences of almost-minimizers. By choosing a scale such that the area excess of $\SS^n$ is suitably small with respect to the regularity threshold from Allard's theorem -- notice that here we do not have to care about multiplicities as we are working with boundaries of finite perimeter sets -- we deduce by continuity that the area excess of $E_h$ on a cylinder of such scale is going to be below Allard's regularity threshold.) This concludes the proof of the theorem.
  \end{proof}

\section{Sharp stability estimates for $C^1$-small normal deformations of $\SS^n$}\label{section normal def of SSn} This section is devoted to the proof of the estimates in Theorem \ref{thm sharp u}, that is say, to the quantitative stability problem for $C^1$-small normal deformations of the sphere. (The sharpness of these estimates is discussed in the next section.) We divide the proof into a series of lemmas, throughout which we shall always consider the following assumptions:
\begin{eqnarray}
    \label{u basic assumption}
    &&\mbox{$\Om$ is an open set with $C^{1,1}$-boundary $\pa\Om=\big\{(1+u(x))\,x:x\in\SS^n\big\}$}
    \\
    \label{H less than n x}
    &&\mbox{$H_\Om\le n$ a.e. on $\pa\Om$}
    \\
    \label{u small C1}
    &&\|u\|_{C^1(\SS^n)}\le\e(n)
    \\
    \label{S zero barycenter}
    &&\int_{\pa\Om}\,x =0\,.
\end{eqnarray}
Notice that
\[
\de(\Om)=P(\Om)-P(B_1)
\]
can be made arbitrarily small thanks to \eqref{u basic assumption} and \eqref{u small C1}, and is non-negative by Almgren's principle.

\begin{lemma}
  \label{thm quant u prelim}
  If \eqref{u basic assumption}, \eqref{H less than n x}, \eqref{u small C1} and \eqref{S zero barycenter} hold, then
  \begin{gather}
    \frac{\de(\Om)}{C(n)}\le\int_{\mathbb{S}^n} u \leq C(n) \, \de(\Om) \, , \label{avg to delta}
    \\
	\|u\|_{W^{1,2}(\SS^n)}^2\leq C(n) \, \left( \de(\Om)^2 + \de(\Om) \, \|u\|_{C^0(\SS^n)} \right) \, , \label{L2 gradient to delta}
  \end{gather}
  \begin{equation}\label{stima hd sharp}
    \|u\|_{C^0(\SS^n)}\le C(n)\,\left\{
    \begin{split}
      &\de(\Om)\,,&\qquad\mbox{if $n=1$}\,,
      \\
      &\de(\Om)\,\log\Big(\frac1{\de(\Om)}\Big)\,,&\qquad\mbox{if $n=2$}
      \\
      &\de(\Om)^{1/(n-1)}\,,&\qquad\mbox{if $n\ge 3$}\,.
    \end{split}
    \right .
  \end{equation}
\end{lemma}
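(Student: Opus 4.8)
The plan is to reduce everything to analysis on $\SS^n$. Writing $\pa\Om=\{(1+u(\omega))\,\omega:\omega\in\SS^n\}$, one has $P(\Om)=\int_{\SS^n}(1+u)^{n-1}\sqrt{(1+u)^2+|\nabla u|^2}\,d\H^n$ and $|\Om|=\frac{1}{n+1}\int_{\SS^n}(1+u)^{n+1}\,d\H^n$, together with $\int_{\pa\Om}x\,d\H^n=\int_{\SS^n}(1+u)^{n+1}\,\omega\,(1+\mathrm{O}(|\nabla u|^2))\,d\H^n$. Using $\|u\|_{C^1}\le\e(n)$ I would Taylor expand to second order, obtaining
\[
\de(\Om)=n\int_{\SS^n}u+\frac12\int_{\SS^n}\big(|\nabla u|^2+n(n-1)u^2\big)+R_1,\qquad |\Om|=|B_1|+\int_{\SS^n}u+\frac n2\int_{\SS^n}u^2+R_2,
\]
with $|R_1|\le C(n)\,\|u\|_{C^1}\,\|u\|_{W^{1,2}}^2$ and $|R_2|\le C(n)\,\|u\|_{C^0}\,\|u\|_{L^2}^2$, while the zero barycenter condition forces the degree-one spherical harmonic component $u_1$ of $u$ to be quadratically small, $\|u_1\|_{L^2}\le C(n)\,\|u\|_{C^1}\,\|u\|_{W^{1,2}}$. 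I would also record the spectral facts for $-\Delta_{\SS^n}$ (eigenvalues $k(k+n-1)$, $k\ge 0$): the quadratic form $Q(u):=\frac12\int_{\SS^n}|\nabla u|^2+\frac{n(n-1)}2\int_{\SS^n}u^2$ is nonnegative, and on the orthogonal complement of constants and degree-one harmonics one has $\int_{\SS^n}|\nabla u|^2-n\int_{\SS^n}u^2\ge(n+2)\int_{\SS^n}u^2$.

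The next step is to extract consequences of $H_\Om\le n$. By the tangential divergence theorem $n\,P(\Om)=\int_{\pa\Om}H_\Om\,(x\cdot\nu_\Om)\,d\H^n$, and since $x\cdot\nu_\Om=(1+u)^2/\sqrt{(1+u)^2+|\nabla u|^2}>0$, the bound $H_\Om\le n$ gives $P(\Om)\le(n+1)|\Om|$, hence $\de(\Om)\le(n+1)(|\Om|-|B_1|)$; combined with the Euclidean isoperimetric inequality and Almgren's principle this yields $|\Om|\ge|B_1|$. Feeding the expansions above into these two facts and into the identity $\de(\Om)=n\int u+Q(u)+R_1$ (with $Q(u)\ge0$), and bootstrapping to absorb $R_1,R_2$ via the smallness of $\|u\|_{C^1}$, I would first obtain the preliminary, non-sharp bounds $|\int_{\SS^n}u|\le C(n)\,\de(\Om)$ and $\|u\|_{W^{1,2}}^2\le C(n)\,\de(\Om)$.

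To reach the sharp estimates I would use $H_\Om\le n$ in its weak, divergence form (which only costs $u\in C^1$, avoiding second derivatives). Expressing the mean curvature of $\pa\Om$ at $(1+u)\omega$ as an explicit divergence-form operator in $(u,\nabla u)$ on $\SS^n$, the inequality $H_\Om\le n$ becomes: for every $\phi\in C^1(\SS^n)$ with $\phi\ge0$,
\[
\int_{\SS^n}\nabla u\cdot\nabla\phi\,d\H^n-n\int_{\SS^n}u\,\phi\,d\H^n\ \le\ \int_{\SS^n}\big(\Phi\,\phi+\Psi\cdot\nabla\phi\big)\,d\H^n,
\]
where $\Phi,\Psi$ vanish to second order at $(u,\nabla u)=(0,0)$. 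Choosing $\phi$ adapted to the higher modes of $u$ — e.g.\ $\phi$ built from $u^+$, from the positive part of an approximate Jacobi field, or from a truncated Green's function of $-\Delta_{\SS^n}-n$ restricted to modes $k\ge2$ — together with the zero-barycenter control of $u_1$ and the preliminary bound $|\int u|\le C\de$, I expect to upgrade the gradient estimate to $\|u\|_{W^{1,2}}^2\le C(n)\,(\de(\Om)^2+\de(\Om)\,\|u\|_{C^0})$, which is \eqref{L2 gradient to delta}. Given this, the upper bound in \eqref{avg to delta} follows from $\de=n\int u+Q(u)+R_1$ with $Q(u)\ge0$ and $Q(u),R_1$ now of size $\mathrm{O}(\de\,\|u\|_{C^0}+\de^2)$, while the lower bound follows by inserting $\int u^2\le\|u\|_{W^{1,2}}^2\le C(\de^2+\de\|u\|_{C^0})$ into $\de\le(n+1)(\int u+\frac n2\int u^2+R_2)$. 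Finally \eqref{stima hd sharp} comes from the equation $\Delta_{\SS^n}u+nu=(n-H_\Om)+(\text{second order})$: splitting $u=(\text{const})+u_1+w$, the constant is $\mathrm{O}(\de)$ and $u_1$ is quadratically small, whereas $w$ (modes $k\ge2$) is recovered by inverting $-\Delta_{\SS^n}-n$, which is positive there; using $n-H_\Om\ge0$ with $\|n-H_\Om\|_{L^1(\pa\Om)}\le C\de$ (from $P(\Om)\le(n+1)|\Om|$ and the preliminary bounds) and the $W^{1,2}$ bound on $w$, the Green's function of this operator — bounded for $n=1$, logarithmically singular for $n=2$, comparable to $|x-y|^{2-n}$ for $n\ge3$ — produces exactly the rates $\de$, $\de\log(1/\de)$, and $\de^{1/(n-1)}$ (the last by interpolating the $L^1$ bound on $n-H_\Om$ against the $W^{1,2}$ bound on $w$).

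I expect the main obstacle to be precisely the upgrade to \eqref{L2 gradient to delta}: the merely integrated form of $H_\Om\le n$ (tested against the position field $x$) gives only $\|u\|_{W^{1,2}}^2\le C\de(\Om)$, and extracting the extra factor $\|u\|_{C^0}$ forces a careful choice of test function in the weak differential inequality together with a delicate absorption of the quadratic error terms. Everything downstream — the two-sided bound \eqref{avg to delta} and the dimension-dependent $C^0$ estimates \eqref{stima hd sharp} — is then standard spectral theory, elliptic regularity, and Sobolev and Green's-function estimates on $\SS^n$.
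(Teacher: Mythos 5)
Your overall architecture matches the paper's: decompose $u=a+b\cdot x+R$ into spherical harmonics, use the vanishing barycenter to make the degree-one part $b$ quadratically small, exploit $H_\Om\le n$ through its weak divergence form \eqref{H formula u} rather than pointwise second derivatives, and finish the $C^0$ bound by a Fuglede-type interpolation between the $W^{1,2}$ estimate and the (order one) $C^0$ bound on $\nabla u$ -- your Green's-function route for \eqref{stima hd sharp} is essentially equivalent to the interpolation inequality \eqref{from fuglede} that the paper invokes, and your $L^1$ bound on $n-H_\Om$ is consistent with what the paper uses elsewhere. So steps one, two, three and five of the paper's proof are correctly anticipated.

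The genuine gap is exactly where you flag it: \eqref{L2 gradient to delta} is never proved, only hoped for. You write ``I expect to upgrade'' and offer three candidate test functions, none of which works as stated. A test function ``restricted to modes $k\ge 2$'' cannot be nonnegative unless it vanishes (a nonzero mean-zero function changes sign), so to make it admissible you must add a constant, and the size of that constant is precisely the whole problem: testing $n-H^*\ge 0$ against anything whose constant part is of order one (e.g. the position field, i.e. $P(\Om)\le(n+1)|\Om|$) only yields $\|u\|_{W^{1,2}}^2\le C\,\de(\Om)$, as you note. The paper's device is to test against $\phi=u-\inf_{\SS^n}u=u+\ell$ with $\ell=\sup_{\SS^n}u^-\le\|u\|_{C^0(\SS^n)}$: this is nonnegative, its constant part is only of size $\|u\|_{C^0}$, and the resulting inequality
\begin{equation*}
0\le\int_{\SS^n}\big(n\,\ell\,u+n\,u^2-|\nabla u|^2\big)+\e\,{\rm O}\big(\|u\|_{W^{1,2}(\SS^n)}^2\big)
\end{equation*}
is then combined with the exact identity $\int_{\SS^n}(n\,u^2-|\nabla u|^2)=\int_{\SS^n}(n\,a^2+n\,R^2-|\nabla R|^2)$ (the degree-one part drops out because its Rayleigh quotient is exactly $n$), with the improved Poincar\'e inequality $\int_{\SS^n}|\nabla R|^2\ge 2(n+1)\int_{\SS^n}R^2$ -- the gap $2(n+1)>n$ is what allows the $n\,R^2$ term to be absorbed -- and with the barycenter bound on $b$, to give $\|u\|_{W^{1,2}}^2\le C(n)(\ell\,a+a^2)$. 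Only after this does one read off $\de(\Om)=n\,\H^n(\SS^n)\,a+\e\,{\rm O}(|a|)$ from the area expansion, obtaining \emph{both} directions of \eqref{avg to delta} and then \eqref{L2 gradient to delta}; note in particular that the lower bound $\de(\Om)\le C(n)\int_{\SS^n}u$ also comes from this absorption, not from the volume comparison you propose. Without this specific choice of test function and the accompanying absorption via the spectral gap, the central estimate of the lemma -- and everything you derive downstream from it -- remains unproved.
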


\begin{proof}[Proof of Lemma \ref{thm quant u prelim}] Following the approach of Fuglede \cite{Fuglede}, the proof consists in expanding $u$ into spherical harmonics, and then obtaining the desired estimates by combining the Taylor expansions of $\de(\Om)$ and $\int_{\pa\Om} x$ with the vanishing barycenter condition \eqref{S zero barycenter} and with the inequality obtained by testing the non-negative function $n-H_\Om$ (computed in the coordinates of $\SS^n$) against $u-\inf_{\SS^n} u$. We notice that assumption \eqref{H less than n x} will not be used until step four of the proof.

\medskip

\noindent {\it Step one}: We start by expressing $u$ into spherical harmonics. Constants and coordinate functions provide the first $n+2$  spherical harmonics on $\SS^n$. Correspondingly we have
\begin{equation}
  \label{u a b R}
  u(x) = a + b \cdot x + R(x)\qquad x\in\SS^n\,,
\end{equation}
where
\[
a = \frac1{\H^n(\SS^n)}\,\int_{\mathbb{S}^n} u\qquad b_i=\frac{\int_{\SS^n}x_i\,u}{\int_{\SS^n}x_i^2}\qquad i=1,...,n+1\,,
\]
and $R\in C^1(\SS^n)$ satisfies
\begin{equation}
  \label{R averages}
  \int_{\mathbb{S}^n} R=0\qquad \int_{\mathbb{S}^n} x\,R= 0\,.
\end{equation}
The following remarks will be useful. First, by $\int_{\SS^n}(b\cdot x)^2=|b|^2\H^n(\SS^n)/(n+1)$, we have
\begin{eqnarray}
  \label{useful u2}
  \int_{\SS^n}u^2&=&\int_{\SS^n}a^2+(b\cdot x)^2+R^2=\H^n(\SS^n)\Big(a^2+\frac{|b|^2}{n+1}\Big)+\int_{\SS^n}R^2
  \\
  \label{usefule Du2}
  \int_{\SS^n}|\nabla u|^2&=&\int_{\SS^n}|b-(b\cdot x)x|^2+|\nabla R|^2\,.
\end{eqnarray}
In particular, by \eqref{useful u2}, \eqref{usefule Du2} and
\[
0=\int_{\SS^n}n(b\cdot x)^2-|b-(b\cdot x)x|^2\,,
\]
we find that
\begin{eqnarray}\nonumber
  \int_{\SS^n}n\,u^2-|\nabla u|^2&=&\int_{\SS^n}n\,a^2+n\,(b\cdot x)^2+n\,R^2-|b-(b\cdot x)x|^2-|\nabla R|^2
  \\\label{later}
  &=&\int_{\SS^n}\,n\,a^2+n\,R^2-|\nabla R|^2\,.
\end{eqnarray}
Second, since the $k$-th eigenvalue $\l_k$ of the Laplacian over $\SS^n$ satisfies $\l_k=k(n+k-1)$, and since $R$ is orthogonal to the first two eigenspaces (see \eqref{R averages}), for $R$ we have the Poincar\'e-type inequality
\begin{equation}
  \label{R poincare}
  \int_{\SS^n}|\nabla R|^2\ge 2(n+1)\int_{\SS^n}R^2\,,
\end{equation}
which is stronger than the usual Poincar\'e inequality
\[
\int_{\SS^n}|\nabla v|^2\ge n\int_{\SS^n}v^2\qquad\mbox{$\forall v\in W^{1,2}(\SS^n)$ with $\int_{\SS^n}v=0$}\,.
\]
Finally, by \eqref{u small C1} we have
\begin{equation}
  \label{a b R C1 small}
  |a|\le \e(n)\qquad |b|\le C(n)\,\e(n)\qquad \|R\|_{C^1(\SS^n)}\le C(n)\,\e(n)\,.
\end{equation}
This fact will be particularly useful in expanding the metric of $S$ as seen from $\SS^n$,
\[
G(x) = \left( G_{ij}(x) \right) = \left( (1+u(x))^2 \delta_{ij} + \nabla_{\tau_i} u(x) \nabla_{\tau_j} u(x) \right)\qquad x\in\SS^n\,.
\]
Here $\tau_1,\ldots,\tau_n$ is an orthonormal basis for $T_x \mathbb{S}^n$ at $x \in \mathbb{S}^n$, and using \eqref{u small C1}, we compute
\begin{equation}\label{metric}
  \begin{split}
  	G^{-1} &= \left( G^{ij}(x) \right) = \left( \frac{\delta_{ij}}{(1+u)^{2}}
  -\frac{\nabla_{\tau_i} u \nabla_{\tau_j} u}{(1+u)^{2} ((1+u)^2 + |\nabla u|^2)}  \right)   \\
	\det G &= (1+u)^{2n-2} ((1+u)^2 + |\nabla u|^2), \\
	\sqrt{\det G} &= (1+u)^{n-1} \sqrt{(1+u)^2 + |\nabla u|^2} \, .
  \end{split}
\end{equation}

\medskip

\noindent {\it Step two}: We now exploit the barycenter assumption \eqref{S zero barycenter} to show that
\begin{equation} \label{b controlled by DR}
	|b| \leq C(n) \, \int_{\mathbb{S}^n} \, |\nabla R|^2+\e\,{\rm O}\big(u^2+|\nabla u|^2\big)\,.
\end{equation}
Indeed, by the area formula, \eqref{S zero barycenter} takes the form
\begin{equation}\label{S zero barycenter u}
	0=\int_{\pa\Om} x=\int_{\mathbb{S}^n} \, (1+u) \, x \, \sqrt{\det G} \, .
\end{equation}
By \eqref{metric} and \eqref{u small C1} we have
\[
(1+u) \sqrt{\det G}=1+(n+1)u+(n+1)n\frac{u^2}2+\frac{|\nabla u|^2}2+\e\,{\rm O}\big( u^2+|\nabla u|^2\big)\,,
\]
where ${\rm O}(u^2+|\nabla u|^2)$ denotes a function of $\mathbb{S}^n$ bounded in absolute value by $C(n)(u^2+|\nabla u|^2)$.
By $\int_{\SS^n}x=0$ and \eqref{R averages} we find
\[
\int_{\SS^n}x\,u=a\int_{\SS^n}\,x+\int_{\SS^n}(b\cdot x)\,x+\int_{\SS^n}x\,R=\int_{\SS^n}(b\cdot x)\,x=b\,\int_{\SS^n}x_1^2
\]
and similarly
\begin{eqnarray*}
\int_{\SS^n}x\,u^2&=&a^2\int_{\SS^n}x+2a\int_{\SS^n}x(b\cdot x+R)+\int_{\SS^n}x(b\cdot x+R)^2
\\
&=&2a\,b\int_{\SS^n}x_1^2+\int_{\SS^n}x(b\cdot x+R)^2
\end{eqnarray*}
By combining \eqref{S zero barycenter u} with these identities we thus find
\[
\H^n(\SS^n)(1+n\,a)\,b=-\frac{n(n+1)}2\int_{\SS^n}x(b\cdot x+R)^2-\int_{\SS^n}x\,\frac{|\nabla u|^2}2+\e\,{\rm O}\big( u^2+|\nabla u|^2\big)
\]
so that, by \eqref{a b R C1 small} and by noticing that $|\nabla u|\le |b| +|\nabla R|$,
\begin{eqnarray*}
|b|&\le&C(n)\int_{\SS^n}|b|^2+R^2+|\nabla R|^2
+\e\,{\rm O}\big( u^2+|\nabla u|^2\big)
\\
&\le&C(n)\,\int_{\SS^n}|b|^2+|\nabla R|^2+\e\,{\rm O}\big(u^2+|\nabla u|^2\big)\,,
\end{eqnarray*}
where in the last inequality we have used \eqref{R poincare} (here using the weaker version with $n$ in place of $2(n+1)$ would have be fine as well).

\medskip

\noindent {\it Step three}: Now we compute $H_\Om$ in the coordinates of $\SS^n$, see \eqref{H formula u} below. By assumption $H_\Om\in L^1_{{\rm loc}}(\H^n\llcorner\pa\Om)$ is such that
\begin{equation}
  \label{tangential divergence theorem on S}
\int_{\pa\Om}\,\nu_\Om\cdot X\,H_\Om\,d\H^n=\int_{\pa\Om}\,\Div^{\pa\Om}X\,d\H^n\qquad\forall X\in C^\infty_c(\R^{n+1};\R^{n+1})\,.
\end{equation}
Set
\[
H^*(x):=H_\Om((1+u(x))x)\qquad\forall x\in \SS^n\,.
\]
We test \eqref{tangential divergence theorem on S} with $X\in C^\infty_c(\R^{n+1};\R^{n+1})$ such that, for some fixed $\zeta\in C^\infty_c(\R^{n+1})$ and for every $x\in\SS^n$, one has
\[
	X((1+u(x))x) = \left. \frac{d}{dt} (1+u(x)+t\zeta(x))\,x) \right|_{t=0} = \zeta(x)\,x \,.
\]
By the area formula, \eqref{tangential divergence theorem on S} writes as
\begin{equation}\label{H_eqn1}
	\int_{\mathbb{S}^n} \, H^* \, \zeta \, \nu_\Om \cdot x \, \sqrt{\det G}
	= \int_{\mathbb{S}^n} \, \sqrt{\det G} \, G^{ij} \, \left( \nabla_{\tau_j} u \nabla_{\tau_i} \zeta + (1+u) \, \zeta \,\de_{ij}\right) \,.
\end{equation}
We expand the both sides of \eqref{H_eqn1} by means of \eqref{metric} in order to find that
\begin{eqnarray*}
	&&\int_{\mathbb{S}^n} \,H^* \, \zeta \, (1+u)^n
\\\nonumber
	&=& \int_{\mathbb{S}^n} \, (1+u)^{n-3} \, \sqrt{(1+u)^2 + |\nabla u|^2} \, \left( \delta_{ij} - \frac{\nabla_{\tau_i} u \nabla_{\tau_j} u}{(1+u)^2 + |\nabla u|^2} \right)
		\left( \nabla_{\tau_j} u \nabla_{\tau_i} \zeta + (1+u) \, \zeta\,\de_{ij} \right) \\
	&=& \int_{\mathbb{S}^n} \, (1+u)^{n-3} \, \left( \frac{(1+u)^2 \, \nabla u \cdot \nabla \zeta}{\sqrt{(1+u)^2 + |\nabla u|^2}}
		- \frac{(1+u) \, |\nabla u|^2 \, \zeta}{\sqrt{(1+u)^2 + |\nabla u|^2}} + n \, (1+u) \, \sqrt{(1+u)^2 + |\nabla u|^2} \, \zeta \right) \,.
\end{eqnarray*}
Replacing $\zeta$ with $(1+u)^{-n} \, \zeta$,
\begin{equation*}
	\int_{\mathbb{S}^n} \,H^* \, \zeta
	= \int_{\mathbb{S}^n} \, \left( \frac{\nabla u \cdot \nabla \zeta}{(1+u) \, \sqrt{(1+u)^2 + |\nabla u|^2}}
		+ \frac{n \, \zeta}{\sqrt{(1+u)^2 + |\nabla u|^2}} - \frac{|\nabla u|^2 \, \zeta}{(1+u)^2 \, \sqrt{(1+u)^2 + |\nabla u|^2}} \right) \,.
\end{equation*}
Since $\zeta \in C^\infty_c(\R^{n+1})$ is arbitrary, we can write $H^*$ in divergence form as
\begin{align} \label{H formula u}
	H^*&= -\op{div}_{\mathbb{S}^n} \left( \frac{\nabla u}{(1+u) \, \sqrt{(1+u)^2 + |\nabla u|^2}} \right) + \frac{n-\frac{|\nabla u|^2}{(1+u)^2}}{\sqrt{(1+u)^2 + |\nabla u|^2}}\,,
\end{align}
which is the formula needed in the sequel.

\medskip

\noindent {\it Step four}: We conclude the proof.  By \eqref{u small C1}, \eqref{metric},  \eqref{useful u2} and \eqref{usefule Du2}
\begin{eqnarray*}
\label{delta_eqn3}
	\de(\Om) &=& \H^n(S) - \H^n(\mathbb{S}^n)
    \\
	&=& \int_{\mathbb{S}^n} \, \left( \sqrt{\det G} - 1 \right) = \int_{\mathbb{S}^n} \, \left( (1+u)^{n-1} \, \sqrt{(1+u)^2 + |\nabla u|^2} - 1 \right) \nonumber \\
	&=& \int_{\mathbb{S}^n} \, \left( n \, u + \frac{n\, (n-1)}{2} \, u^2 + \frac{1}{2} \, |\nabla u|^2 \right) + \e\,{\rm O}\big(\|u\|_{W^{1,2}(\SS^n)}^2\big) \nonumber \\
	&=& \H^n(\mathbb{S}^n)\left( n \, a + \frac{n \, (n-1)}{2} \, a^2   + \frac{n(n-1)}{2(n+1)}\, |b|^2\right)
+\frac12\int_{\SS^n}|b-(b\cdot x)x|^2
		\\
&&\nonumber+ \int_{\mathbb{S}^n} \, \left( \frac{n\, (n-1)}{2} \, R^2 + \frac{|\nabla R|^2}{2} \right) + \e\,{\rm O}\big(\|u\|_{W^{1,2}(\SS^n)}^2\big)\, ,
\end{eqnarray*}
which thanks to \eqref{b controlled by DR} gives
\begin{equation}
  \label{yyy}
  \de(\Om)=\H^n(\mathbb{S}^n)\left( n \, a + \frac{n \, (n-1)}{2} \, a^2 \right)
		+ \int_{\mathbb{S}^n} \, \left( \frac{n\, (n-1)}{2} \, R^2 + \frac{|\nabla R|^2}{2} \right) + \e\,{\rm O}\big(\|u\|_{W^{1,2}(\SS^n)}^2\big)
\end{equation}
Now let $\ell = \sup_{\mathbb{S}^n} |u^-|$, where $u^-(x) = \max \{-u(x), \, 0\}$, so that $u + \ell \geq 0$ and $\ell \leq \e$ by \eqref{u small C1}. By \eqref{H less than n x}, \eqref{H formula u} and $\|u\|_{C^1(\SS^n)}\le\e$
\begin{align*}
	0 &\leq \int_{\mathbb{S}^n} \, (n - H^*) \, (u+\ell) \\
	&= \int_{\mathbb{S}^n} \, \left( -\frac{|\nabla u|^2}{(1+u) \, \sqrt{(1+u)^2 + |\nabla u|^2}} + n \, (u+\ell) - \frac{n\, (u+\ell)}{\sqrt{(1+u)^2 + |\nabla u|^2}}
		\right. \nonumber \\& \hspace{15mm} \left. + \frac{(u+\ell) \, |\nabla u|^2}{(1+u)^2 \, \sqrt{(1+u)^2 + |\nabla u|^2}} \right) \\
	&\leq \int_{\mathbb{S}^n} \, (n\,\ell \, u+ n \, u ^2 - |\nabla u|^2) + \e\,{\rm O}\big(\|u\|_{W^{1,2}(\SS^n)}^2\big)\,.
\end{align*}
By combining this inequality with \eqref{later}, we find that
\[\int_{\SS^n}|\nabla R|^2
\le n\,\ell\,a+\int_{\SS^n} na^2+nR^2
+ \e\,{\rm O}\big(\|u\|_{W^{1,2}(\SS^n)}^2\big)\,.
\]
By \eqref{R poincare} (where now it is crucial to have $2(n+1)$ in place of $n$ in the Poincar\'e inequality)
\begin{equation}
  \label{nanni}
  \Big(1-\frac{n}{2(n+1)}\Big)\,\int_{\SS^n}|\nabla R|^2
\le
n\,\ell\,a+\,n\,\H^n(\SS^n)\,a^2
+ \e\,{\rm O}\big(\|u\|_{W^{1,2}(\SS^n)}^2\big)\,.
\end{equation}
By combining \eqref{useful u2}, \eqref{usefule Du2}, \eqref{b controlled by DR} and \eqref{nanni} we find that
\begin{equation}
  \label{new}
  \int_{\SS^n}u^2+|\nabla u|^2\le C(n)\,\big(\ell\,a+a^2\big)\le C(n)\e\,|a|\,.
\end{equation}
Hence \eqref{yyy} gives $\de(\Om)=n\H^n(\SS^n)a+\e\,{\rm O}(|a|)$, that is
\begin{equation} \label{delta_eqn4}
	\frac{\de(\Om)}{C(n)}\leq a\leq C(n) \, \de(\Om) \,.
\end{equation}
This proves \eqref{avg to delta}, and then the first inequality in \eqref{new} implies \eqref{L2 gradient to delta}.

\medskip

\noindent {\it Step five}: We finally prove \eqref{stima hd sharp}. To this end, let us recall the following Poincar\'e-type interpolation inequality from \cite[Lemma 1.4]{Fuglede}: {\it for every $v\in C^1(\SS^n)$ with $\int_{\SS^n}v=0$, one has}
\begin{equation}\label{from fuglede}
	\|v\|_{C^0(\SS^n)}
    \leq\,C(n)\,
    \begin{cases}
    \|\nabla v\|_{L^2(\SS^1)}
    &\textnormal{if $n = 1$}
    \\
    \|\nabla v\|_{L^2(\SS^2)} \,  \log\left(C(2)\,\frac{\|\nabla v\|_{C^0(\SS^2)}}{\|\nabla v\|_{L^2(\SS^2)}}\right)^{\frac12}
    &\textnormal{if  $n=2$}
    \\
    \|\nabla v\|_{C^0(\SS^n)}^{\frac{n-2}n}\,
    \|\nabla v\|_{L^2(\SS^n)}^{\frac2n}
    &\textnormal{if $n>2$}\,.
    \end{cases}
\end{equation}
We deduce \eqref{stima hd sharp} by applying \eqref{from fuglede} to $v=u-a$. For example, in the case $n>2$, by \eqref{avg to delta}, \eqref{L2 gradient to delta} and \eqref{from fuglede} we find
\begin{eqnarray*}
  \|u\|_{C^0(\SS^n)}\le a+  \|v\|_{C^0(\SS^n)}\le C(n)\,\de(\Om)+C(n)\,\Big(\de(\Om)^2+\|u\|_{C^0(\SS^n)}\de(\Om)\Big)^{1/n}\,.
\end{eqnarray*}
Assuming without loss of generality that $\de(\Om)\le\|u\|_{C^0(\SS^n)}/M(n)$ for a suitably large constant $M(n)$, we deduce that
\[
  \|u\|_{C^0(\SS^n)}\le C(n)\,\Big(\de(\Om)^2+\|u\|_{C^0(\SS^n)}\de(\Om)\Big)^{1/n}
  \le C(n)\,\|u\|_{C^0(\SS^n)}^{1/n}\de(\Om)^{1/n}
\]
and thus $\|u\|_{C^0(\SS^n)}\le\de(\Om)^{1/(n-1)}$, as desired. The cases $n=2$ and $n=1$ follow by analogous arguments.
This completes the proof of Lemma \ref{thm quant u prelim}.
\end{proof}

Taking into account Lemma \ref{thm quant u prelim}, in order to complete the proof of Theorem \ref{thm sharp u} we are left to obtain linear bounds on $\|u\|_{L^1(\SS^n)}$ and $\|u^+\|_{C^0(\SS^n)}$.

\begin{lemma}
  \label{thm quant u prelim 2}
  If $u$ and $\Om$ satisfy \eqref{u basic assumption}, \eqref{u small C1} and \eqref{S zero barycenter}, then for every $q>n/2$
  \begin{equation}
    \label{stima after}
    \|u\|_{C^0(\SS^n)}\le C(n,q)\,\Big(\|u\|_{L^2(\SS^n)}+\|H_\Om-n\|_{L^q(\pa\Om)}\Big)\,,
  \end{equation}
  and whenever $1\le p<n/(n-1)$,
  \begin{equation}
    \label{stimaLp simon}
  \|\nabla u\|_{L^p(\SS^n)}\le C(n,p)\,\big(\|u\|_{L^p(\SS^n)}+\|H_\Om-n\|_{L^1(\pa\Om)}\big)\,.
  \end{equation}
  In addition: (i) if $H_\Om\le n$ $\H^n$-a.e. on $\pa\Om$, then
  \begin{eqnarray}
  \label{moser}
  \|u^+\|_{C^0(\SS^n)}\le C(n)\,\|u\|_{L^1(\SS^n)}\,;
  \end{eqnarray}
  (ii) if $p\in(1,\infty)$ and there exists $K>0$ such that
  \begin{equation}
    \label{existence of K st}
      \|\nabla u\|_{L^p(\SS^n)}\le K\,\|u\|_{L^p(\SS^n)},
  \end{equation}
  then, provided $\|u\|_{C^1(\SS^n)}\le\e(n,p,K)$, one has
  \begin{equation}
    \label{consequence of K}
    \|u\|_{L^p(\SS^n)} \leq C(n,p,K) \, \Big( \Big| \int_{\mathbb{S}^n}u\,x \Big| + \|H_\Om-n\|_{L^1(\pa\Om)}\Big) \, .
  \end{equation}
  (iii) if $\a\in(0,1)$ and there exists $K>0$ such that $\|\nabla u\|_{C^{0,\a}(\SS^n)}\le K$, then
  \begin{equation}
    \label{stima after holder}
    \|u\|_{C^{1,\a}(\SS^n)}\le C(n,K,\a)\,\big(\|u\|_{C^0(\SS^n)}+\|H_\Om-n\|_{L^\infty(\pa\Om)}\big)\,.
  \end{equation}
  (iv) finally, if $\Lambda\ge0$, $1\le p<n/(n-1)$, and
  \begin{eqnarray}
    \label{H less than n and Lambda}
  &&-\Lambda\le H_\Om\le n\qquad\mbox{$\H^n$-a.e. on $\pa\Om$}
  \\
  \label{u small C1 Lambda 2}
  && \|u\|_{C^1(\SS^n)} \le\e(n,p,\Lambda)\,,
  \end{eqnarray}
  then
  \begin{gather}\label{stima W1p u sharp}
    \|u\|_{W^{1,p}(\SS^n)}\le C(n,p,\Lambda)\,\de(\Om)\,,
  \\
    \label{stima u+}
    \|u^+\|_{C^0(\SS^n)}\le C(n,\Lambda)\,\de(\Om)\,.
  \end{gather}
\end{lemma}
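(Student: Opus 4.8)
The unifying idea is to linearize the mean--curvature operator in \eqref{H formula u}. Expanding the coefficients of \eqref{H formula u} at $u\equiv 0$ and using $\|u\|_{C^1(\SS^n)}\le\e(n)$, I would rewrite the identity $H^*(x)=H_\Om((1+u(x))x)$ as the \emph{linear} elliptic equation
\[
-\op{div}_{\SS^n}\big(\mathcal A\,\n u\big)+\mathbf b\cdot\n u+(c-n)\,u \;=\; H^*-n \qquad\text{on }\SS^n,
\]
in which $\mathcal A,\mathbf b,c$ are functions of $x$ alone (obtained by substituting $(u(x),\n u(x))$ into smooth functions, hence as regular as $\n u$), $\mathcal A$ is scalar with $\tfrac12\le\mathcal A\le 2$, and $\|\mathcal A-1\|_{C^0(\SS^n)}+\|\mathbf b\|_{C^0(\SS^n)}+\|c\|_{C^0(\SS^n)}\le C(n)\,\|u\|_{C^1(\SS^n)}$. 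Thus the equation is uniformly elliptic with ellipticity and lower--order bounds depending only on $n$, and is an $O(\|u\|_{C^1})$ perturbation of $(-\Delta_{\SS^n}-n)u=H^*-n$. Since the graph map has Jacobian $\sqrt{\det G}\in[\tfrac12,2]$ by \eqref{metric}, it also gives $\|H^*-n\|_{L^r(\SS^n)}\le C(n)\,\|H_\Om-n\|_{L^r(\pa\Om)}$ for every $r\in[1,\infty]$. Each estimate in the lemma would then be read off from this equation by the appropriate piece of linear elliptic regularity, always absorbing the $O(\|u\|_{C^1})$ terms into the left--hand side.

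Concretely: for \eqref{stima after} I would invoke the De Giorgi--Nash--Moser $L^\infty$ bound for divergence--form equations with bounded measurable coefficients and source in $L^q$, $q>n/2$ (covering $\SS^n$ by finitely many balls), which directly yields $\|u\|_{C^0}\le C(n,q)(\|u\|_{L^2}+\|H^*-n\|_{L^q})$. For \eqref{stimaLp simon} I would write the equation as $-\Delta u=f+\op{div}\!\big((\mathcal A-1)\n u\big)$ with $f\in L^1(\SS^n)$ of size $\|H_\Om-n\|_{L^1(\pa\Om)}+\|u\|_{L^1}+\|u\|_{C^1}\|\n u\|_{L^1}$ and $\int_{\SS^n}f=0$ (forced by the equation), apply the Boccardo--Gallou\"et/Stampacchia estimate $\|\n u\|_{L^p}\le C(n,p)(\|f\|_{L^1}+\|(\mathcal A-1)\n u\|_{L^p})$ valid for $p<n/(n-1)$, and absorb the term $C\|u\|_{C^1}\|\n u\|_{L^p}$. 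For part (i): when $H^*\le n$, $u$ is a subsolution of the uniformly elliptic operator on the left, so the local boundedness estimate for subsolutions gives $\sup_{\SS^n}u\le C(n)\,\|u^+\|_{L^1(\SS^n)}$, i.e.\ \eqref{moser}. For part (iii): with $\|\n u\|_{C^{0,\a}}\le K$ the coefficients are $C^{0,\a}$ with seminorms bounded by $C(n,K)$, and the interior Schauder estimate for the resulting linear equation gives \eqref{stima after holder}, the lower--order contribution being absorbed by the usual small--radius and interpolation argument.

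The crux is part (ii), which is also the ingredient that drives (iv). I would decompose $u=a+b\cdot x+R$ along the first eigenspaces of $-\Delta_{\SS^n}$ as in Step one of the proof of Lemma \ref{thm quant u prelim}, so that $a=\fint_{\SS^n}u$, $b\cdot x$ is the projection onto the eigenvalue--$n$ eigenspace, $|\int_{\SS^n}u\,x|=c(n)|b|$, and $R$ is $L^2$--orthogonal to $\{1,x_1,\dots,x_{n+1}\}$. Projecting the equation onto constants bounds $|a|$ by $\|H_\Om-n\|_{L^1}$ plus terms $C\,\|u\|_{C^1}(\|u\|_{L^1}+\|\n u\|_{L^1})$; projecting onto the orthogonal complement of the first two eigenspaces, on which $-\Delta_{\SS^n}-n$ is invertible with positive spectrum, and applying elliptic regularity to the equation for $R$ --- whose right--hand side is $\Pi_R(H^*-n)\in L^1$ plus the divergence error $(\mathcal A-1)\n u$ and the lower--order errors, all of size $\|u\|_{C^1}(\|u\|+\|\n u\|)$ --- while interpolating against the a priori bound $\|u\|_{C^1}\le\e(n)$ to reach the given exponent $p$, would bound $\|R\|_{L^p}$ in the same way. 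Throughout I would use hypothesis \eqref{existence of K st} to replace each $\|\n u\|_{L^{s}}$ ($s\le p$) by $K\,\|u\|_{L^p}$; then, taking $\e(n,p,K)$ small, the resulting terms $C\,\e\,(1+K)\,\|u\|_{L^p}$ are absorbed and one obtains $\|u\|_{L^p}\le C(n,p,K)\big(|\int_{\SS^n}u\,x|+\|H_\Om-n\|_{L^1}\big)$, i.e.\ \eqref{consequence of K}.

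For part (iv): under $-\L\le H_\Om\le n$ and $\|u\|_{C^1}$ small I would first establish $\|H_\Om-n\|_{L^1(\pa\Om)}\le C(n)\,\de(\Om)$. Indeed $|H_\Om-n|=n-H_\Om$, and by the area formula and by testing \eqref{H formula u} against $1$ (so that the divergence term drops) $\int_{\pa\Om}(n-H_\Om)=\int_{\SS^n}(n-H^*)\sqrt{\det G}$; this is within a factor $1\pm C\|u\|_{C^1}$ of $\int_{\SS^n}(n-H^*)$, which by expanding and comparing with the expansion of $\de(\Om)=\int_{\SS^n}(\sqrt{\det G}-1)$ from Lemma \ref{thm quant u prelim} equals $\de(\Om)+O(\|u\|_{W^{1,2}(\SS^n)}^2)=\de(\Om)\,(1+O(\de(\Om)+\|u\|_{C^0}))$ by \eqref{avg to delta}--\eqref{L2 gradient to delta}. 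Since $\de(\Om)\le C(n)\,|a|\le C(n)\,\|u\|_{L^p}$ by \eqref{avg to delta}, the bound $\|H_\Om-n\|_{L^1}\le C(n,\L)\|u\|_{L^p}$ fed into \eqref{stimaLp simon} verifies \eqref{existence of K st} with $K=K(n,p,\L)$; part (ii) then gives $\|u\|_{L^p}\le C\big(|\int_{\SS^n}u\,x|+\|H_\Om-n\|_{L^1}\big)$, and $|\int_{\SS^n}u\,x|=c(n)|b|\le C(n)\|u\|_{W^{1,2}}^2\le C(n)\de(\Om)$ by \eqref{b controlled by DR} and Lemma \ref{thm quant u prelim}, whence $\|u\|_{L^p}\le C(n,p,\L)\de(\Om)$; combined with \eqref{stimaLp simon} this is \eqref{stima W1p u sharp}, and \eqref{stima u+} follows from \eqref{moser} together with $\|u\|_{L^1}\le\|u\|_{W^{1,1}}\le C(n,\L)\de(\Om)$. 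The hardest point is the integrability bootstrap inside (ii): elliptic estimates with $L^1$ data reach only $W^{1,p}$ with $p<n/(n-1)$, whereas (ii) is asserted for all $p\in(1,\infty)$, so the gap must be bridged by interpolating against the a priori $C^1$--smallness while keeping every error term in the absorbable form $\e(1+K)\|u\|_{L^p}$ --- this is precisely why the smallness threshold in (ii), and hence in (iv), must depend on $K$ (resp.\ $\L$). Everything else is routine bookkeeping around standard linear elliptic theory.
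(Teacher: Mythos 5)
Most of your plan matches the paper's in substance: the paper also reduces everything to the linearized mean curvature equation (locally, as a graph over tangent disks, with its own $L^1$-data estimate, Lemma \ref{elliptic lemma 2}, plus a covering/absorption argument, rather than your global Boccardo--Gallou\"et-type estimate on $\SS^n$, but these are interchangeable), and your chaining in part (iv) is the paper's, except that you derive $\|H_\Om-n\|_{L^1(\pa\Om)}\le C\,\de(\Om)$ by integrating \eqref{H formula u} and invoking \eqref{avg to delta}--\eqref{L2 gradient to delta}, whereas the paper gets \eqref{H to delta} directly from Almgren's identity \eqref{almgren identity}; your variant is legitimate (and does not even use the lower bound $-\Lambda$), so parts \eqref{stima after}, \eqref{stimaLp simon}, (i), (iii) and the bookkeeping of (iv) are fine modulo routine care.

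The genuine gap is in part (ii), which you yourself flag as ``the hardest point'' but do not resolve. Your scheme bounds $\|R\|_{L^p}$ by applying the inverse of $\Delta_{\SS^n}+n$ (on the orthogonal complement of the first two eigenspaces) to a right-hand side that is only controlled in $L^1$, namely $H^*-n$. The Green kernel of this operator has the same local singularity $d(x,y)^{2-n}$ as the Laplacian's, so no linear estimate $\|R\|_{L^p}\le C\,\|\,\cdot\,\|_{L^1}$ exists once $p\ge n/(n-2)$; and the proposed fix --- interpolating against the a priori bound $\|u\|_{C^1}\le\e$ --- cannot repair this, because interpolation produces a bound of the form $\e^{1-\theta}(\,|{\textstyle\int u\,x}|+\|H_\Om-n\|_{L^1})^{\theta}$ with $\theta<1$, which is sublinear in the data, while \eqref{consequence of K} is a linear estimate. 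So as written your argument for (ii) only works in the restricted range $p<n/(n-2)$ (enough for the application in (iv), but not for the statement, which asserts all $p\in(1,\infty)$). The paper's way around this is the compactness/contradiction argument of Lemma \ref{lemma elliptic sphere}: assume the estimate fails along a sequence, normalize in $L^p$, use precisely hypothesis \eqref{existence of K st} to get a uniform $W^{1,p}$ bound and hence strong $L^p$ convergence, and pass to the limit in the distributional formulation --- where $\|G_k\|_{L^1}\to0$ suffices, no quantitative $L^1\to L^p$ bound being needed --- to find a nontrivial solution of $\Delta u+n\,u=0$ with $\int_{\SS^n}u\,x=0$, contradicting the fact that this kernel is spanned by the coordinate functions. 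Without this (or an equivalent) ingredient, your proof of \eqref{consequence of K}, and hence of the lemma as stated, is incomplete.
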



The proof is based on combining Almgren's identity \eqref{almgren identity} with two estimates from elliptic regularity theory, which are proved in Lemma \ref{elliptic lemma 2} and Lemma \ref{lemma elliptic sphere} below.

\begin{lemma} \label{elliptic lemma 2}
If $n \geq 2$, $p\in(1,n/(n-1))$, $\rho>0$, $f\in L^p(B_\rho;\R^n)$, $g\in L^1(B_\rho)$, and $v\in C^1(B_\rho)$ is a weak solution of
\begin{equation} \label{elliptic2_eqn1}
	\Delta v=\Div(f)+g\qquad\mbox{in $B_\rho$}\,,
\end{equation}
then
\begin{equation}
  \label{elliptic step one rho}
  \|\nabla v\|_{L^p(B_{\rho/2})}\le C(n,p)\Big(\rho^{-1}\,\|v\|_{L^p(B_\rho)}+\|f\|_{L^p(B_\rho)}+\rho^{1+\frac{n}p-n}\,\|g\|_{L^1(B_\rho)}\Big)\,.
\end{equation}
\end{lemma}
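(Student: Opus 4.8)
\noindent The plan is the classical Newtonian-potential decomposition combined with Calder\'on--Zygmund theory; the point is that the exponent restriction $p<n/(n-1)$ is exactly what makes the gradient of the fundamental solution locally in $L^p$, so that the $L^1$-datum $g$ can be absorbed by Young's inequality. Since \eqref{elliptic step one rho} is scaling critical, I would first reduce to $\rho=1$: if $\tilde v(x)=v(\rho x)$, then $\Delta\tilde v=\Div\tilde f+\tilde g$ on $B_1$ with $\tilde f(x)=\rho\,f(\rho x)$ and $\tilde g(x)=\rho^2\,g(\rho x)$, and, expressed in terms of the corresponding quantities for $v,f,g$ on $B_\rho$, the four terms in \eqref{elliptic step one rho} carry the homogeneities $\rho^{1-n/p}$, $\rho^{-n/p}$, $\rho^{1-n/p}$, $\rho^{2-n}$ respectively; dividing through by $\rho^{1-n/p}$ produces precisely the weight $\rho^{1+n/p-n}$ on the $L^1$-term. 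So from now on $\rho=1$.

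Let $N$ be the fundamental solution of $-\Delta$ on $\R^n$, so that $|\nabla N(x)|\le C(n)\,|x|^{1-n}$, while $N$ has at most logarithmic ($n=2$) or $|x|^{2-n}$ ($n\ge 3$) growth at the origin. Extending $f$ and $g$ by zero outside $B_1$, set $w=w_f+w_g$ with
\[
w_f(x)=\int_{B_1}\nabla N(x-y)\cdot f(y)\,dy\,,\qquad w_g(x)=\int_{B_1}N(x-y)\,g(y)\,dy\,,
\]
so that $\Delta w=\Div f+g$ in $\mathcal D'(B_1)$, hence $h:=v-w$ is harmonic in $B_1$. The gradient $\nabla w_f$ is obtained from $f\,\mathbf 1_{B_1}$ through the Calder\'on--Zygmund operator with kernel $(\partial_i\partial_j N)$, so $\|\nabla w_f\|_{L^p(\R^n)}\le C(n,p)\,\|f\|_{L^p(B_1)}$ for $1<p<\infty$; since also $|\nabla N|\,\mathbf 1_{B_2}\in L^1(\R^n)$, Young's inequality $\|k*\phi\|_{L^p}\le\|k\|_{L^1}\|\phi\|_{L^p}$ gives $\|w_f\|_{L^p(B_1)}\le C(n,p)\,\|f\|_{L^p(B_1)}$. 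The estimate of $w_g$ is where $p<n/(n-1)$ is used: the elementary computation $\int_{B_2}|x|^{(1-n)p}\,dx<\infty\iff(n-1)p<n$ shows $|\nabla N|\,\mathbf 1_{B_2}\in L^p(\R^n)$ (and, a fortiori, $|N|\,\mathbf 1_{B_2}\in L^p$), whence Young's inequality $\|k*\phi\|_{L^p}\le\|k\|_{L^p}\|\phi\|_{L^1}$ yields $\|w_g\|_{L^p(B_1)}+\|\nabla w_g\|_{L^p(B_1)}\le C(n,p)\,\|g\|_{L^1(B_1)}$. Altogether $\|w\|_{L^p(B_1)}+\|\nabla w\|_{L^p(B_1)}\le C(n,p)\,\big(\|f\|_{L^p(B_1)}+\|g\|_{L^1(B_1)}\big)$.

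Finally, since $h=v-w$ is harmonic in $B_1$, interior gradient estimates for harmonic functions give $\|\nabla h\|_{L^p(B_{1/2})}\le C(n,p)\,\|h\|_{L^p(B_1)}\le C(n,p)\,\big(\|v\|_{L^p(B_1)}+\|w\|_{L^p(B_1)}\big)$; combining this with $\|\nabla v\|_{L^p(B_{1/2})}\le\|\nabla h\|_{L^p(B_{1/2})}+\|\nabla w\|_{L^p(B_{1/2})}$ and the bounds on $w$ proves \eqref{elliptic step one rho} for $\rho=1$, and the general case follows by the rescaling above. The only genuinely delicate point is the treatment of the $L^1$-datum $g$: it is precisely the hypothesis $p<n/(n-1)$ that keeps $\nabla N$ locally in $L^p$ and so makes the Young-inequality step work — the estimate fails for $p\ge n/(n-1)$. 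A minor technical caveat is that extending $f$ by zero makes $\Div(f\,\mathbf 1_{B_1})$ carry a singular boundary term, but this is harmless since the operator $f\mapsto\nabla\big(N*\Div f\big)$ is bounded on $L^p(\R^n;\R^n)$ regardless of the support of $f$.
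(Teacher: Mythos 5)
Your argument is correct and is essentially the paper's proof: after the same scaling reduction to $\rho=1$, you use the same decomposition of $v$ into Newtonian potentials plus a harmonic remainder, with Calder\'on--Zygmund theory for the $f$-term, the local $L^p$-integrability of $|\nabla N|\sim|x|^{1-n}$ for $p<n/(n-1)$ combined with Young's inequality for the $L^1$-datum $g$, and interior gradient estimates for the harmonic part. The only differences are cosmetic: you work directly with the zero-extensions of $f$ and $g$ and global bounds on $\R^n$, thereby bypassing the paper's mollification and cut-off steps, and your sign convention ($N$ the fundamental solution of $-\Delta$) gives $\Delta(w_f+w_g)=-(\Div f+g)$, so one should take $w=-(w_f+w_g)$ --- harmless.
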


\begin{proof}[Proof of Lemma \ref{elliptic lemma 2}] The argument is based on the use of standard elliptic estimates, and it is detailed just for the sake of clarity. (In particular, we could not find an exact reference for the case considered in here, where we need to use the $L^1$-norm of $g$; see \eqref{H to delta} below.) By scaling we can set $\rho=1$, and then prove \eqref{elliptic step one rho} in three steps.

\medskip

\noindent {\it Step one}: We assume that $f\in C^\infty_c(B_1;\R^n)$ and $g \in C_c^{\infty}(B_1)$. Denoting by $\Gamma$  the fundamental solution of the Laplacian on $\R^n$, let us set $v^i=\Gamma\star (D_if^i)$ and $w=\Gamma\star g$, where $\star$ denotes convolution. In this way $\Delta v^i = D_if^i$ and $\Delta w = g$ on $\R^n$ (in pointwise sense), and defining $\vphi$ by the identity
\[
v =\varphi +\sum_{i=1}^n v^i + w
\]
we have that $\varphi$ is harmonic on $B_1$, and thus such that
\begin{equation}
  \label{vphi regularity}
  \|\nabla \varphi\|_{L^p(B_{1/2})} \leq C(n,p) \, \|\varphi\|_{L^p(B_1)}\,.
\end{equation}
By the Calderon-Zygmund theory, $\Delta v^i = D_if^i$ implies
\begin{equation}
  \label{vi reg}
  \|\nabla v^i\|_{L^p(B_{1/2})} \leq C(n,p) \, \Big(\|v^i\|_{L^p(B_1)}+\|f^i\|_{L^p(B_1)}\Big)\, .
\end{equation}
Since $|\nabla\Gamma(z)|=c(n)\,|z|^{1-n}$ for every $n\ge 2$, thanks to $p<n/(n-1)$ we have
\begin{align}\nonumber
	\int_{B_{1/2}} |Dw(x)|^p \, dx &\leq \int_{B_{1/2}} \left( \int_{\mathbb{R}^n} |x-y|^{1-n} \, |g(y)| \, dy \right)^p dx
\\\nonumber
	&\leq \int_{B_{1/2}} \left( \int_{\mathbb{R}^n} |x-y|^{p(1-n)} \, |g(y)| \, dy \right) \left( \int_{\mathbb{R}^n} |g(y)| \, dy \right)^{p-1} dx
\\\nonumber
	&= \|g\|_{L^1(B_1)}^{p-1} \, \int_{\mathbb{R}^n} |g(y)|\,dy\int_{B_{1/2}} |x-y|^{p(1-n)}\, dx
\\\label{w reg}
	&\leq C(n,p) \, \|g\|_{L^1(B_1)}^p\,.
\end{align}
By combining \eqref{vphi regularity}, \eqref{vi reg} and \eqref{w reg} we obtain \eqref{elliptic step one rho} when $f\in C^\infty_c(B_1;\R^n)$ and $g \in C_c^{\infty}(B_1)$.

\medskip

\noindent {\it Step two}: Now we assume that $f \in L^p(B_1;\R^n)$, $g \in L^1(B_1)$ with $f=0$ and $g=0$ on $\R^n\setminus B_{3/4}$. Let us fix an even function $\rho\in C^\infty_c(B_1)$ with $0\le \rho\le 1$ and $\int_{\R^n}\rho=1$, set $\rho_\de(z)=\de^{-n}\rho(z/\de)$ for $z\in\R^n$, and define $f_\de=f\star\rho_\de$, $g_\de=g\star\rho_\de$ and $v_\de=\bar{v}\star\rho_\de$, where $\bar v$ is the extension to zero of $v$ outside of $B_1$. If $\de<1/4$, then $f_\de\in C^\infty_c(B_1;\R^n)$, $g_\de\in C^\infty_c(B_1)$ and $v_\de$ is a weak solution in $B_{3/4}$ of $\Delta v_\de=\Div(f_\de)+g_\de$. By case one
\begin{equation}
  \label{elliptic step one case two}
  \|\nabla v_\de\|_{L^p(B_{1/2})}\le C(n,p)\,\Big(\|v_\de\|_{L^p(B_{3/4})}+\|f_\de\|_{L^p(B_{3/4})}+\|g_\de\|_{L^1(B_{3/4})}\Big)\,.
\end{equation}
Since $v_\de \to v$ in $W^{1,p}(B_{3/4})$, $f_\de\to f$ in $L^p(B_{3/4})$ and $g_\de\to g$ in $L^1(B_{3/4})$, letting $\de\to 0^+$ in \eqref{elliptic step one case two} we deduce \eqref{elliptic step one rho} in this case too.

\medskip

\noindent {\it Step three}: We finally prove \eqref{elliptic step one rho} in full generality. Let $\eta\in C^\infty_c(B_{3/4})$ with $0\le \eta\le 1$ and $\eta=1$ on $B_{1/2}$. If $w=\eta\,v$, then, in distributional sense,
\begin{eqnarray*}
\Delta w&=&\eta\,\big(\Div\,f+g\big)+2\nabla\eta\cdot\nabla v+v\,\Delta\eta
\\
&=&\Div(\eta f+2v\nabla\eta)
-f\cdot\nabla\eta-v\Delta\eta+\eta\,g
\\
&=&\Div(\bar f)+\bar g  \qquad\mbox{on $B_1$}
\end{eqnarray*}
provided $\bar f=f\,\eta+2v\nabla\eta\in L^p(B_1)$ and $\bar g=-f\cdot\nabla\eta-v\,\Delta\eta+\eta g$. Since $\bar f$ and $\bar g$ vanish outside $B_{3/4}$, by step two we can apply \eqref{elliptic step one rho} to $w$ and exploit $\nabla v=\nabla w$ on $B_{1/2}$ together with
\begin{eqnarray*}
\|w\|_{L^p(B_1)}+\|\bar f\|_{L^p(B_1)}+\|\bar g\|_{L^1(B_1)}\le C(n,p)\Big(\|v\|_{L^p(B_1)}+\|f\|_{L^p(B_1)}+\|g\|_{L^1(B_1)}\Big)\,,
\end{eqnarray*}
to complete the proof of \eqref{elliptic step one rho} in the general case.
\end{proof}

\begin{lemma}\label{lemma elliptic sphere}
For every $n\ge 2$, $K>0$ and $p\in(1,\infty)$ there exist positive constants $C(n,p,K)$ and $\e(n,p,K)$ with the following property. If
$G\in L^\infty(\SS^n)$, $\a,,\g\in C^0(\mathbb{S}^n)$ and $\b\in C^0(\mathbb{S}^n, T\mathbb{S}^n)$ are such that
\begin{eqnarray}
  \label{corollary elliptic assumption1}
	\max\big\{\|\a-1\|_{C^0(\SS^n)},\|\b\|_{C^0(\SS^n)},\|\g - n\|_{C^0(\SS^n)}\big\}\le\e(n,p,K)
\end{eqnarray}
and if $u\in C^1(\mathbb{S}^n)$ is a weak solution to
\begin{equation}\label{corollary elliptic assumption2}
	\Div_{\SS^n}(\a\,\nabla u)+\beta\cdot\nabla u+\g\,u=G\qquad\mbox{on $\SS^n$}
\end{equation}
such that
\begin{equation}
  \label{corollary bound strutturale}
  \|\nabla u\|_{L^p(\SS^n)}\le K\,\|u\|_{L^p(\SS^n)},
\end{equation}
then
\begin{equation} \label{elliptic3_eqn1}
	\|u\|_{L^p(\SS^n)} \leq C(n,p,K) \, \Big( \Big| \int_{\mathbb{S}^n}u\,x \Big| + \|G\|_{L^1(\SS^n)}\Big) \, .
\end{equation}
\end{lemma}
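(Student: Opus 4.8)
The plan is to argue by contradiction and compactness, degenerating the perturbed equation \eqref{corollary elliptic assumption2} to the limiting equation $\Delta_{\SS^n}u+n\,u=0$ on $\SS^n$, whose only weak solutions are restrictions of linear functions --- precisely the functions that are annihilated by the orthogonality constraint against $x$ appearing in \eqref{elliptic3_eqn1}.

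Suppose the assertion fails. Then for every $h\in\N$ one can find $\a_h,\g_h\in C^0(\SS^n)$ and $\b_h\in C^0(\SS^n,T\SS^n)$ with $\max\{\|\a_h-1\|_{C^0(\SS^n)},\|\b_h\|_{C^0(\SS^n)},\|\g_h-n\|_{C^0(\SS^n)}\}\le 1/h$, together with $G_h\in L^1(\SS^n)$ and a weak solution $u_h\in C^1(\SS^n)$ of $\Div_{\SS^n}(\a_h\nabla u_h)+\b_h\cdot\nabla u_h+\g_h\,u_h=G_h$ satisfying \eqref{corollary bound strutturale} (with constant $K$) but with $\|u_h\|_{L^p(\SS^n)}>h\,\big(|\int_{\SS^n}u_h\,x|+\|G_h\|_{L^1(\SS^n)}\big)$. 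Since both the equation and \eqref{corollary bound strutturale} are homogeneous in the pair $(u,G)$, I may rescale so that $\|u_h\|_{L^p(\SS^n)}=1$; then $|\int_{\SS^n}u_h\,x|\to0$ and $\|G_h\|_{L^1(\SS^n)}\to0$. By \eqref{corollary bound strutturale} we get $\|\nabla u_h\|_{L^p(\SS^n)}\le K$, so $\{u_h\}$ is bounded in $W^{1,p}(\SS^n)$; as $p\in(1,\infty)$ this space is reflexive, so up to a subsequence $u_h\rightharpoonup u$ in $W^{1,p}(\SS^n)$, and by the compactness of the embedding $W^{1,p}(\SS^n)\hookrightarrow L^p(\SS^n)$ (Rellich--Kondrachov: $p$ is below the Sobolev conjugate exponent when $p<n$, and $W^{1,p}\hookrightarrow C^0$ compactly when $p\ge n$) we have $u_h\to u$ strongly in $L^p(\SS^n)$, whence $\|u\|_{L^p(\SS^n)}=1$.

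Next I pass to the limit in the weak formulation
\[
-\int_{\SS^n}\a_h\,\nabla u_h\cdot\nabla\var+\int_{\SS^n}(\b_h\cdot\nabla u_h)\,\var+\int_{\SS^n}\g_h\,u_h\,\var=\int_{\SS^n}G_h\,\var\,,\qquad\var\in C^\infty(\SS^n)\,.
\]
Because $\a_h\to1$, $\b_h\to0$ and $\g_h\to n$ uniformly and $\SS^n$ has finite measure, $\a_h\nabla\var\to\nabla\var$, $\b_h\var\to0$ and $\g_h\var\to n\,\var$ strongly in $L^{p'}(\SS^n)$ (with $p'=p/(p-1)$); pairing these with the weak $L^p$-limit $\nabla u_h\rightharpoonup\nabla u$ and the strong $L^p$-limit $u_h\to u$ passes the first three terms to the limit, while $\int_{\SS^n}G_h\,\var\to0$ since $\|G_h\|_{L^1(\SS^n)}\to0$ and $\var$ is bounded. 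Hence $-\int_{\SS^n}\nabla u\cdot\nabla\var+\int_{\SS^n}n\,u\,\var=0$ for all $\var$, i.e. $\Delta_{\SS^n}u+n\,u=0$ weakly on $\SS^n$. By elliptic regularity $u$ is smooth and lies in the first nontrivial eigenspace of $-\Delta_{\SS^n}$ (eigenvalue $\l_1=n$), so $u(x)=c\cdot x$ for some $c\in\R^{n+1}$; but $\int_{\SS^n}u\,x=\lim_h\int_{\SS^n}u_h\,x=0$ forces $c=0$, so $u\equiv0$, contradicting $\|u\|_{L^p(\SS^n)}=1$. This proves the lemma.

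The step requiring the most care is the passage to the limit together with the fact that $G_h$ is controlled only in $L^1$: the argument is arranged so that $G_h$ is tested solely against bounded functions and the coefficients converge uniformly, so no $L^p$-bound on $G_h$ is ever needed. Correspondingly, the $W^{1,p}$-compactness of $\{u_h\}$ is obtained from the structural hypothesis \eqref{corollary bound strutturale} rather than from an elliptic estimate --- which is essential, since for $p$ outside the Sobolev range $[1,n/(n-1))$ one cannot bound $\|\nabla u_h\|_{L^p}$ in terms of $\|u_h\|_{L^p}+\|G_h\|_{L^1}$ from the equation alone (compare Lemma \ref{elliptic lemma 2}).
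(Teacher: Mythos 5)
Your proposal is correct and follows essentially the same route as the paper: a contradiction/compactness argument in which the normalized sequence, bounded in $W^{1,p}(\SS^n)$ via the structural bound \eqref{corollary bound strutturale}, converges strongly in $L^p$ and weakly in $W^{1,p}$ to a weak solution of $\Delta_{\SS^n}u+n\,u=0$ with unit $L^p$-norm, which must be of the form $c\cdot x$ and is then ruled out by the vanishing of $\int_{\SS^n}u\,x$. The only differences are expository (you spell out the test-function limit and the role of the $L^1$-control on $G$ a bit more explicitly), so nothing further is needed.
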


\begin{proof}[Proof of Lemma \ref{lemma elliptic sphere}] We argue by contradiction and assume the existence, for every $k\in\N$, of $G_k\in L^\infty(\SS^n)$, $\a_k\,\g_k\in C^0(\SS^n)$, $\b_k\in C^0(\SS^n;T\SS^n)$, and $u_k\in C^1(\SS^n)$ such that
\begin{eqnarray}\label{limit 1}
  &&\lim_{k\to\infty}\max\Big\{\|\a_k-1\|_{C^0(\SS^n)},\|\b_k\|_{C^0(\SS^n)},\|\g_k - n\|_{C^0(\SS^n)}
  \Big\}=0
  \\\nonumber
  &&\Div_{\SS^n}(\a_k\,\nabla u_k)+\beta_k\cdot\nabla u_k+\g\,u_k=G_k\qquad\mbox{weakly on $\SS^n$}
\end{eqnarray}
with $\|\nabla u_k\|_{L^{p}(\SS^n)}\le K\,\|u_k\|_{L^p(\SS^n)}$ for every $k\in\N$, and
\begin{equation}
  \label{recall}
  \frac{\|u_k\|_{L^p(\SS^n)}}k \ge \Big| \int_{\mathbb{S}^n}  u_k\,x \Big| + \|G_k\|_{L^1(\SS^n)}\,.
\end{equation}
If we set $\bar u_k=\|u_k\|_{L^p(\SS^n)}^{-1}u_k$ and $\bar G_k=\|u_k\|_{L^p(\SS^n)}G_k$, then $\|\bar G_k\|_{L^1(\SS^n)}\to 0$ and $|\int_{\SS^n} \bar u_k\,x|\to 0$ as $k\to\infty$ with
\begin{eqnarray}\label{limit 2}
  \Div_{\SS^n}(\a_k\,\nabla \bar u_k)+\beta_k\cdot\nabla\bar u_k+\g\,\bar u_k=\bar G_k\qquad\mbox{weakly on $\SS^n$}
\end{eqnarray}
and $\|\nabla \bar u_k\|_{L^p(\SS^n)}\le K$ for every $k\in\N$. Since $p\in(1,\infty)$, there exists $u\in W^{1,p}(\SS^n)$ such that $\bar u_k\to u$ in $L^p(\SS^n)$ (so that $\|u\|_{L^p(\SS^n)}=1$ and $\int_{\SS^n}  u\,x=0$) and $\nabla \bar u_k\rightharpoonup\nabla u$ in $L^p(\SS^n)$. By \eqref{limit 1} and since $\|\bar G_k\|_{L^1(\SS^n)}\to 0$ as $k\to\infty$, we deduce from \eqref{limit 2} that
\[
\Delta u+n\,u=0\qquad\mbox{on $\SS^n$}\,.
\]
Since $\|u\|_{L^p(\SS^n)}=1$, $u$ is an eigenvector of the Laplacian on $\SS^n$ corresponding to the eigenvalue $\l_1=n$. In particular, $u=c(n,p)\,x\cdot e$ for some unit vector $e$. This contradicts $\int_{\SS^n} u\,x=0$ and thus completes the proof of the lemma.
\end{proof}

\begin{proof}[Proof of Lemma \ref{thm quant u prelim 2}] Fix $e\in\SS^n$, and set
\begin{eqnarray}\label{K for a moment}
\K_r&=&\big\{x\in\R^{n+1}:|x-(x\cdot e)e|<r\,,x\cdot e>0\big\}\,,
\\\nonumber
\D_r&=&\{z\in e^\perp:|z|<r\}
\end{eqnarray}
so that $\K_r=\D_r\times \R_+\,e$. If $w_0(z)=\sqrt{1-|z|^2}$, then
\[
\SS^n\cap\K_1=\big\{x\in\K_1:x\cdot e=w_0(x-(x\cdot e)\,e)\big\}\,,
\]
and
\begin{equation}
  \label{w0 mc eqn}
  -\Div\Big(\frac{\nabla w_0}{\sqrt{1+|\nabla w_0|^2}}\Big)= n\quad\mbox{on $\D_1$}\,.
\end{equation}
Thanks to $\|u\|_{C^1(\SS^n)}\le \e(n)$, we can find $w\in C^{1,1}(\D_{1/2})$ with $\Lip(w)\le C(n)$ such that
\[
\pa\Om\cap\K_{1/2}=\big\{x\in\K_{1/2}:x\cdot e=w(x-(x\cdot e)\,e)\big\}\,.
\]
Let us define $h\in L^\infty(\D_{1/2})$ by setting, for a.e. $z\in\D_{1/2}$,
\[
h(z)=H_\Om(z+w(z)\,e)=-\Div\Big(\frac{\nabla w(z)}{\sqrt{1+|\nabla w(z)|^2}}\Big)\,.
\]
Setting for $z\in\D_{1/2}$ and $\xi\in\R^n$
\begin{equation}\label{def v F M}
v(z)=w(z)-w_0(z)\,,\qquad F(\xi)=\frac{\xi}{\sqrt{1+|\xi|^2}}\,,\qquad M(z)=\int_0^1\,\nabla F(\nabla w_0(z)+t\,\nabla v(z))\,dt
\end{equation}
we find that $F(\nabla w)-F(\nabla w_0)=M\,\nabla v$ and thus
\begin{equation}
  \label{coming back}
  h-n=-\Div(F(\nabla w)-F(\nabla w_0))=-\Div(M\nabla v)=-\Delta v-\Div((M-\Id)\nabla v)
\end{equation}
holds on $\D_{1/2}$. We now argue as follows:

\medskip

\noindent {\it Proof of \eqref{stima after}}: By the De Giorgi-Nash-Moser theorem (see, e.g. \cite[Theorem 8.17]{gt}), since $\Div(M\,\nabla v)=n-h$ on $\D_{1/2}$, we find
\[
\|v\|_{C^0(\D_{1/4})}\le C(n,q)\,\Big(\|v\|_{L^2(\D_{1/2})}+\|n-h\|_{L^q(\D_{1/2})}\Big)\qquad\forall q>\frac{n}2\,,
\]
which immediately implies \eqref{stima after} thanks to a covering argument.

\medskip

\noindent {\it Proof of \eqref{stimaLp simon}}: If we set
\[
g=n-h\qquad f=(\Id-M)\nabla v\,,
\]
then $g\in L^\infty(\D_{1/2})$ and $f\in C^0(\D_{1/2};\R^n)$ with
\[
\|g\|_{L^1(\D_{1/2})}\le C(n)\,\|H-n\|_{L^1(\pa\Om)}\qquad \|f\|_{L^p(\D_\rho)}\le C(n)\,(\e+\rho)\,\|\nabla v\|_{L^p(\D_\rho)}\,.
\]
for every $\rho\in(0,1/2)$, where we have used $\|u\|_{C^1(\SS^n)}\le\e$ and $\nabla w_0(0)=0$ to deduce
\[
\|M-\Id\|_{C^0(\D_\rho)}\le C\,\big(\|\nabla w_0\|_{C^0(\D_\rho)}+\|\nabla v\|_{C^0(\D_\rho)}\big)\le C(n)\,\big(\e+\rho\big)\,.
\]
Since $v$ solves $\Delta v=\Div(f)+g$ in $\D_{1/2}$, by Lemma \ref{elliptic lemma 2} we find that, for every $\rho\in(0,1/2)$,
\begin{eqnarray*}
\|\nabla v\|_{L^p(\D_{\rho/2})}&\le& C(n,p)\Big(\rho^{-1}\,\|v\|_{L^p(D_\rho)}+\|f\|_{L^p(\D_\rho)}+\rho^{1+\frac{n}p-n}\,\|g\|_{L^1(\D_\rho)}\Big)
  \\
  &\le& C(n,p)\Big(\rho^{-1}\,\|v\|_{L^p(D_\rho)}+(\e+\rho)\|\nabla v\|_{L^p(\D_\rho)}+\rho^{1+\frac{n}p-n}\,\|H-n\|_{L^1(\pa\Om)} \Big)\,.
\end{eqnarray*}
If we denote by $G_r(e)$ the geodesic ball on $\SS^n$ of radius $r>0$ and center $e\in\SS^n$, then this last estimate implies, in terms of $u$, that, for every $\rho\in(0,1/4)$
\begin{equation}
  \label{simon}
  \|\nabla u\|_{L^p(G_\rho(e))}\le C_0(n,p)\,
\Big(\rho^{-1}\,\|u\|_{L^p(G_{4\rho}(e))}+(\e+\rho)\|\nabla u\|_{L^p(G_{4\rho}(e))}+\rho^{1+\frac{n}p-n}\,\|H-n\|_{L^1(\pa \Om)}\Big)\,.
\end{equation}
Let us set, for $\rho_0=\rho_0(n,p)\in(0,1/4)$ to be determined,
\[
Q=\sup\big\{\|\nabla u\|_{L^p(G_\rho(e))}:\rho\in(\rho_0/2,\rho_0)\,,e\in\SS^n\big\}\,.
\]
Clearly there exists $N(n)$ such that for every $e\in\SS^n$ and $\rho<1/4$ one can find $\{e_k\}_{k=1}^{N(n)}\subset\SS^n$ such that
\[
G_\rho(e)\subset\bigcup_{k=1}^{N(n)}\,G_{\rho/4}(e_k)\,.
\]
In this way by \eqref{simon} and definition of $Q$ we find that if $\rho\in(\rho_0/2,\rho_0)$, then
\begin{eqnarray*}
  \|\nabla u\|_{L^p(G_\rho(e))}^p&\le&\sum_{k=1}^N\,\|\nabla u\|_{L^p(G_{\rho/4}(e_k))}^p
  \\
  &\le&C_0(n,p)^p
  \sum_{k=1}^{N(n)}\,\Big(\frac{\|u\|_{L^p(\SS^n)}}{\rho}+(\e+\rho_0)\|\nabla u\|_{L^p(G_{\rho}(e_k))}
  +\rho^{1+\frac{n}p-n}\,\|H-n\|_{L^1(\pa \Om)}\Big)^p
  \\
  &\le&C_0(n,p)^p\,N(n)\,\Big(\frac{2\,\|u\|_{L^p(\SS^n)}}{\rho_0}+(\e+\rho_0)\,Q
  +\rho_0^{1+\frac{n}p-n}\,\|H-n\|_{L^1(\pa \Om)}\Big)^p\,,
\end{eqnarray*}
that is
\[
Q \le
C_0(n,p)\,N(n)^{1/p}\,\Big(\frac{2\,\|u\|_{L^p(\SS^n)}}{\rho_0}+(\e+\rho_0)\,Q+\rho_0^{1+\frac{n}p-n}\,\|H-n\|_{L^1(\pa \Om)}\Big)\,.
\]
Provided $\e$ and $\rho_0$ are small enough in terms of $n$ and $p$, we conclude that
\[
\|\nabla u\|_{L^p(G_\rho(e))}\le C(n,p)\big(\|u\|_{L^p(\SS^n)}+\|H-n\|_{L^1(\pa \Om)}\big)\,,\qquad\forall \rho\in(\rho_0/2,\rho_0)\,,e\in\SS^n\,,
\]
so that, by a covering argument, we obtain \eqref{stimaLp simon}.

\medskip

\noindent {\it Proof of \eqref{moser}}: Recall that for proving \eqref{moser} we are assuming $H_\Om\le n$ $\H^n$-a.e. on $\pa\Om$, so that, by definition of $h$, we have $h(z)\le n$ a.e. on $\D_{1/2}$. Coming back to \eqref{coming back} we thus see that $v=w-w_0$ solves
\[
\Div(M\,\nabla v)=n-h\ge0\qquad\mbox{on $\D_{1/2}$}
\]
that is, $v$ is a subsolution to a quasilinear elliptic equation on $\D_{1/2}$. By Moser's iteration technique we find that
\[
\|v^+\|_{C^0(\D_{1/4})}\le C(n)\,\|v\|_{L^1(\D_{1/2})}\,,
\]
and thanks to the arbitrariness of $e$, we conclude the proof of \eqref{moser}.

\medskip

\noindent {\it Proof of \eqref{stima after holder}}: Since we are assuming that $\|\nabla u\|_{C^{0,\a}(\SS^n)}\le K$, we have $\|\nabla v\|_{C^{0,\a}(\D_{1/2})}\le C(n,K)$, and thus looking back at the definition \eqref{def v F M} of $M$, that
\[
\|M\|_{C^{0,\a}(\D_{1/2})}\le C(n,K)\,.
\]
We can thus apply \cite[Theorem 8.32]{gt} to find that
\[
\|v\|_{C^{1,\a}(\D_{1/4})}\le C(n,K,\a)\Big(\|v\|_{C^0(\D_{1/2})}+\|g\|_{L^\infty(\D_{1/2})}\Big)\,,
\]
and then deduce \eqref{stima after holder} by a covering argument.

\medskip

\noindent {\it Proof of \eqref{consequence of K}}: Let us recall \eqref{H formula u}, namely
\begin{align}
	H^*= -\op{div}_{\mathbb{S}^n} \left( \frac{\nabla u}{(1+u) \, \sqrt{(1+u)^2 + |\nabla u|^2}} \right) + \frac{n-\frac{|\nabla u|^2}{(1+u)^2} }{\sqrt{(1+u)^2 + |\nabla u|^2}} \,.
		\label{Hdue}
\end{align}
If we set
\[
\a= \frac{1}{(1+u) \, \sqrt{(1+u)^2 + |\nabla u|^2}}\qquad \g=\frac{n}u\,\Big(1-\frac1{1+u}\Big)\qquad G=H^*-n\,,
\]
and define $\b:\SS^n\to T\SS^n$ so that
\[
\frac{|\nabla u|^2}{(1+u)^2 \, \sqrt{(1+u)^2 + |\nabla u|^2}}+n\Big(\frac1{1+u}-\frac1{\sqrt{(1+u)^2+|\nabla u|^2}}\Big)=\beta\cdot\nabla u\,,
\]
then \eqref{Hdue} takes the form
\begin{equation}
  \label{on SSn 1}
  \Div_{\SS^n}(\a\,\nabla u)+\beta\cdot\nabla u+\g\,u=G\qquad\mbox{on $\SS^n$}
\end{equation}
where $\a\in C^0(\SS^n)$, $\b\in C^0(\SS^n;T\SS^n)$, $\g\in C^0(\SS^n)$ and $G\in L^\infty(\SS^n)$ are such that
\begin{eqnarray}
\label{alfa beta and gamma small}
  \max\big\{\|\a-1\|_{C^0(\SS^n)},\|\b\|_{C^0(\SS^n)},\|\g-n\|_{C^0(\SS^n)}\big\}\le C(n)\,\|u\|_{C^1(\SS^n)}
  \\\label{G small}
  \|G\|_{L^1(\SS^n)}\le C(n)\,\|H-n\|_{L^1(\pa \Om)} \qquad \|G\|_{L^\infty(\SS^n)}=\|H-n\|_{L^\infty(\pa\Om)}\,.
\end{eqnarray}
Thus, given $K>0$ such that \eqref{existence of K st} holds, the validity of \eqref{consequence of K} follows by assuming $\|u\|_{C^1(\SS^n)}\le\e(n,p,K)$ and thanks to Lemma \ref{lemma elliptic sphere}.

\medskip

\noindent {\it Conclusion of the proof}: We finally assume the validity of \eqref{H less than n and Lambda} and \eqref{u small C1 Lambda 2} and prove \eqref{stima W1p u sharp} and \eqref{stima u+}. We first notice that by \eqref{almgren identity}, denoting by $A$ the convex envelope of $\Om$, we have
\[
\de(\Om)\ge\H^n(\pa\Om\setminus \pa A)+\int_{\pa A\cap \pa\Om}\bigg(1-\Big(\frac{H_\Om}{n}\Big)^n\bigg)\,.
\]
Since $0\le H_\Om\le n$ on $\pa A\cap\pa\Om$, thanks to \eqref{H less than n and Lambda} we find
\begin{equation}
\|H_\Om-n\|_{L^1(\pa\Om)}\leq  C(n) \Big(1+\|(H_\Om)^-\|_{L^\infty(\pa\Om)}\Big)\, \de(\Om)\le C(n,\Lambda)\,\de(\Om) \, . \label{H to delta}
\end{equation}
Next we claim that
\begin{gather}\label{stima Lp u sharp}
    \|u\|_{L^p(\SS^n)}\le C(n,p,\Lambda)\,\de(\Om)\,.
\end{gather}
To show this let us assume without loss of generality that
\begin{equation}
  \label{wlog}
  \de(\Om)\le \|u\|_{L^p(\SS^n)}\,,
\end{equation}
so that \eqref{stimaLp simon} and \eqref{H to delta} imply in particular
\begin{equation}
  \label{stima0}
  \|\nabla u\|_{L^p(\SS^n)}\le  C_*(n,p,\Lambda)\,\|u\|_{L^p(\SS^n)}\,.
\end{equation}
Thanks to \eqref{u small C1 Lambda 2}, \eqref{consequence of K} holds with $K=C_*(n,p,\Lambda)$, and gives us, taking \eqref{H to delta} into account,
\begin{equation}
\label{stima1}
\|u\|_{L^p(\SS^n)}\le C(n,p,\Lambda)\,\Big(\Big|\int_{\SS^n} u\,x\Big|+\de(\Om)\Big)
\end{equation}
By \eqref{L2 gradient to delta}, \eqref{b controlled by DR}, and $\|u\|_{C^1(\SS^n)}\le\e$
\[
\Big|\int_{\SS^n}u\,x\Big| \le C(n)\,\|u\|_{W^{1,2}(\SS^n)}^2 \le C(n)\,\Big(\de(\Om)^2+\|u\|_{C^0(\SS^n)}\de(\Om)\Big)\le C(n)\,\e\,\de(\Om)
\]
which combined with \eqref{stima1} gives us \eqref{stima Lp u sharp}. By combining \eqref{stima Lp u sharp} with \eqref{stimaLp simon} and \eqref{H to delta} we find \eqref{stima W1p u sharp}. By combining \eqref{moser} and \eqref{stima Lp u sharp} we find \eqref{stima u+}. The proof is complete.
\end{proof}

We now combine the Lemma \ref{thm quant u prelim}, Lemma \ref{thm quant u prelim 2} and Proposition \ref{prop E construction} to prove the estimates in the statement of Theorem \ref{thm sharp u}. Their sharpness, which is also part of Theorem \ref{thm sharp u}, is addressed in the next section.

\begin{proof}
  [Proof of Theorem \ref{thm sharp u}, estimates] Let $\Om$ be such that \eqref{u basic assumption}, \eqref{H less than n x}, \eqref{u small C1} and \eqref{S zero barycenter} hold, and such that $\pa\Om$ is smooth. By Lemma \ref{thm quant u prelim} we find that \eqref{sharp barycenter}, \eqref{sharp w12} and \eqref{sharp c0} hold. We are thus left to prove
  \[
  \max\big\{\|u\|_{L^1(\SS^n)},\|(u)^+\|_{C^0(\SS^n)}\big\}\le C(n)\,\de(\Om)\,.
  \]
  Since $\|u\|_{C^1(\SS^n)}\le\e(n)$ implies $\de(\Om)\le\de_0(n)$ for $\de_0(n)$ as in Theorem \ref{thm structure}-(ii), we can apply Theorem \ref{thm structure}-(ii) (at this point we need $\Om$ to be better than $C^{1,1}$-regular, compare with Proposition \ref{prop E construction}-(iii)) to find an open set $E$ with $C^{1,1}$-boundary in $\R^{n+1}$ such that
  \begin{eqnarray}\nonumber
    &&\Om\subset E\,,\qquad\diam(\Om)=\diam(E)\,,
    \\\label{bb0}
    &&|E\setminus\Om|+\H^n\big(\pa E\setminus\pa\Om\big)\le C(n)\,\de(\Om)
    \\\nonumber
    &&\|H_E\|_{L^\infty(\pa E)}\le n\,.
  \end{eqnarray}
  By \eqref{sharp w12} we have
  \[
  |\Om\Delta B_1|\le C(n)\,\|u\|_{L^1(\SS^n)}\le C(n)\,\|u\|_{L^2(\SS^n)}\le C(n)\,\sqrt{\de(\Om)}
  \]
  so that by $\Om\subset E$
  \[
  |E\Delta B_1|\le|E\setminus\Om|+|\Om\Delta B_1|\le C(n)\,\sqrt{\de(\Om)}\,,
  \]
  We can thus argue as in the proof of Theorem \ref{thm structure} and apply Allard's theorem to deduce that, if $\e(n)$ (and thus $\de(\Om)$) is small enough, then for some $v\in C^{1,1}(\SS^n)$ we have
  \begin{equation}
    \label{bb1}
      \pa E=\big\{(1+v(x))\,x:x\in\SS^n\big\}\qquad \|v\|_{C^1(\SS^n)}\le C(n)\,\e(n)\,.
  \end{equation}
  (Notice that conclusion (ii) in Theorem \ref{thm structure} is analogous to \eqref{bb1} but holds only after a translation. Here we do not need to translate neither $E$ or $\Om$, as we know by assumption that $\Om$ is close to $B_1$.)
  
  We would now like to apply Lemma \ref{thm quant u prelim 2} to $E$, but the barycenter $x_E$ of $\pa E$, defined by
  \[
  x_E=\frac1{P(E)}\int_{\pa E}x
  \]
  may be non-zero. We notice however that
  \begin{equation}
    \label{xE small}
    |x_E|\le C(n)\,\de(\Om)\,.
  \end{equation}
  Indeed, by \eqref{bb1},
  \begin{eqnarray}\nonumber
  \de(E)&=&P(E)-P(B_1)\le\H^n(\pa E\cap\pa\Om)+C(n)\,\de(\Om)-P(B_1)
  \\\nonumber
  &\le&P(\Om)+C(n)\,\de(\Om)-P(B_1)
  \\\label{bb2}
  &\le&C(n)\,\de(\Om)\,,
  \end{eqnarray}
  so that $P(B_1)/2\le P(E)\le 2\,P(B_1)$ and we can directly focus on the size of $\int_{\pa E}x$. To this end, we first notice that
  by the assumption $\int_\Om x=0$, we have
  \[
  \int_{\pa E}x= \int_{\pa E\cap\pa\Om}x+\int_{\pa E\setminus\pa\Om}x=\int_{\pa E\setminus\pa\Om}x-\int_{\pa\Om\setminus\pa E}x\,.
  \] 
  Now, by \eqref{bb0} and \eqref{bb1}
  \[
  \Big|\int_{\pa E\setminus\pa\Om}x\Big|\le \|v\|_{C^0(\SS^n)}\,\H^n(\pa E\setminus\pa\Om)\le C(n)\,\de(\Om)\,,
  \]
  while
  \[
  \Big|\int_{\pa\Om\setminus\pa E}x\Big|\le \|u\|_{C^0(\SS^n)}\,\H^n(\pa\Om\setminus\pa E)
  \]
  where $\Om\subset E\subset A$ (with $A$ the convex envelope of $\Om$) implies
  \[
  \pa\Om\setminus\pa E=(\pa \Om)\cap E\subset(\pa\Om)\cap A=\pa\Om\setminus\pa A
  \]
  and where Almgren's identity \eqref{almgren identity} gives $\H^n(\pa \Om\setminus\pa A)\le\de(\Om)$. Putting everything together, we deduce \eqref{xE small}.
  
  We can thus apply Lemma \ref{thm quant u prelim 2} to $E-x_E$, and find
  \[
  \|v\|_{W^{1,1}(\SS^n)}\le C(n)\,\big(\de(E)+|x_E|\big)\,,\qquad \|v^+\|_{C^0(\SS^n)}\le C(n)\,\big(\de(E)+|x_E|\big)\,.
  \]
  which combined with \eqref{xE small} and \eqref{bb2} gives
  \begin{eqnarray*}
    \|u\|_{L^1(\SS^n)}&\le&C(n)\,|\Om\Delta B_1|\le C(n)\,\big(|E\setminus\Om|+|E\Delta B_1|\big)
    \\
    &\le&C(n)\,\big(\de(\Om)+\|v\|_{L^1(\SS^n)}\big)\le C(n)\,\de(\Om)\,,
  \end{eqnarray*}
  that is \eqref{sharp L1}. Similarly, since $\Om\subset E$ we have that
  \begin{eqnarray*}
  \|u^+\|_{C^0(\SS^n)}=\sup\{|x|-1:x\in\Om\}\le\sup\{|x|-1:x\in E\}=\|v^+\|_{C^0(\SS^n)}
  \le C(n)\,\de(\Om)\,,
  \end{eqnarray*}
  that is \eqref{sharp c0+}. This completes the proof of the estimates in Theorem \ref{thm sharp u}.
  \end{proof}

  \begin{proof}
    [Proof of Theorem \ref{thm main}] Let $\Om$ be a bounded open set with smooth boundary in $\R^{n+1}$ such that $H_\Om\le n$ and $\de(\Om)\le\de(n)$.  By Theorem \ref{thm structure} there exists an open bounded set $E$ with boundary of class $C^{1,1}$ such that
    $\Om\subset E$, $\diam(\Om)=\diam(E)$, $\|H_E\|_{L^\infty(\pa E)}\le n$, $|E\setminus\Om|\le C(n)\de(\Om)$, $\H^n\big(\pa E\setminus\pa\Om\big)\le C(n)\,\de(\Om)$, and $\pa E=\{(1+u(x))x:x\in\SS^n\}$ where $u\in C^1(\SS^n)$ is such that $\|u\|_{C^1(\SS^n)}\le\e(n)$ for $\e(n)$ as in Theorem \ref{thm sharp u}. In particular, $\|u\|_{L^1(\SS^n)}\le C(n)\de(E)$ and $\|u^+\|_{C^0(\SS^n)}\le C(n)\de(E)$. We conclude by arguing as in the last part of the previous proof.
  \end{proof}

\section{Sharpness of Theorem \ref{thm sharp u}}\label{section example}

The goal of this section is proving the sharpness of Theorem \ref{thm sharp u}. Given that the sharpness of \eqref{sharp L1}, \eqref{sharp c0+}, \eqref{sharp c0} (limited to the case $n=1$) and \eqref{sharp w12} is easily checked by considering the set $\Om=B_{1+t}$ as $t\to 0^+$, we focus on proving the sharpness of \eqref{sharp c0} when $n\ge 2$, namely
   \[
   \|u\|_{C^0(\SS^n)}\le C(n)\,\left\{
    \begin{split}
      &\de(\Om)\,\log\left(\frac{C(2)}{\de(\Om)}\right)&\qquad\mbox{if $n=2$}
      \\
      &\de(\Om)^{1/(n-1)}&\qquad\mbox{if $n>2$}\,.
    \end{split}
    \right .
    \]
    We are going to do this by constructing a family of open sets with $C^{1,1}$-boundary $\{\Om_t\}_{t \in (0,t_0)}$, such that $\pa\Om_t=\{(1+u_t(x))\,x:x\in\SS^n\}$ for $u_t\in C^{1,1}(\SS^n)$ such that
    \begin{equation}
      \label{est}
      C(n)\,   \|u_t\|_{C^0(\SS^n)}\ge \,\left\{
    \begin{split}
      &\de(\Om_t)\,\log\left(\frac{1}{\de(\Om_t)}\right)&\qquad\mbox{if $n=2$}\,,
      \\
      &\de(\Om_t)^{1/(n-1)}&\qquad\mbox{if $n>2$}\,.
    \end{split}
    \right.
    \end{equation}
    For the sake of simplicity we shall just write $\Om$ and $u$ in place of $\Om_t$ and $u_t$.

    We construct $\pa\Om$ as a surface of revolution obtained by modifying $\SS^n$ in the positive cylinder above a small $n$-dimensional disk. More precisely, we decompose $\R^{n+1}=\R^n\times\R$, denote by $\D_r$ the ball of radius $r>0$ centered at the origin in $\R^n$, and set $\SS^{n-1}=\pa\D_1$.  We introduce some parameters $t$, $r_0$ and $r_1$ satisfying
    \begin{eqnarray}\label{r0 r1}
	0 < t< \frac1{K(n)}\qquad 0 < r_1 < r_0 < \frac{1}{K(n)}
    \end{eqnarray}
    for a suitably large dimensional constant $K(n)$. Later on $r_1$ will be specified as a function of $n$, $r_0,$ and $t$.  We let
\begin{equation} \label{vphi0}
	\vphi_0(r)=\sqrt{1-r^2}\quad r\in[0,1)\,.
\end{equation}
so that $\{ (r\om,\vphi_0(r)) : r \in [0,1), \, \omega \in \mathbb{S}^{n-1} \}$ is the unit upper half sphere in $\R^{n+1}$.  We define $\vphi:[0,1)\to\R$ by setting
\begin{equation}\label{ny1}
	\vphi(r) =\left\{
    \begin{split}
      &\vphi_0(r)\,,&\qquad r\in[r_0,1)\,,
      \\
      &\vphi_0(r)-t\,h(r)&\qquad r\in[r_1,r_0]\,,
      \\
      &\vphi_0(r_1)-t\,h(r_1)+\frac{r_1}\mu-\sqrt{\Big(1+\frac1{\mu^2}\Big)r_1^2-r^2}&\qquad r\in[0,r_1)\,,
    \end{split}
    \right .
\end{equation}
where we let $h \in C^2([r_1,r_0])$ be a function such that
\begin{equation} \label{h at r0}
	h(r_0) = h'(r_0) = 0
\end{equation}
and we define $\mu$ in terms of $h$ and $r_1$ by setting
\begin{equation} \label{mu}
	\mu = \vphi_0'(r_1) - t \, h'(r_1)\,.
\end{equation}
Notice that if $h\in C^2([r_1,r_0])$, then \eqref{h at r0} and \eqref{mu} guarantee that $\vphi\in C^{1,1}([0,1))$ with $\vphi'(0)=0$. We further specify that
\begin{equation} \label{h decreasing convex}
	0\le h(r)\le 1 \qquad h'(r)\le 0\qquad h''(r)\ge 0\qquad\forall r\in[r_1,r_0]\,,
\end{equation}
so that defining $S$ by
\begin{align}\label{ny2}
	S \cap \big(\D_{r_0} \times (0,\infty)\big) &= \{ (r \omega, \vphi(r)) : r \in [0,r_0], \, \omega \in \mathbb{S}^{n-1} \} \, ,
	\\\nonumber
	S \setminus\big(\D_{r_0} \times (0,\infty)\big) &= \partial B_1 \setminus \big(\D_{r_0} \times (0,\infty)\big)\,,
\end{align}
we find that $S=\pa\Om^*$ for an open set $\Om^*$ with $C^{1,1}$-boundary as depicted in
\begin{figure}
	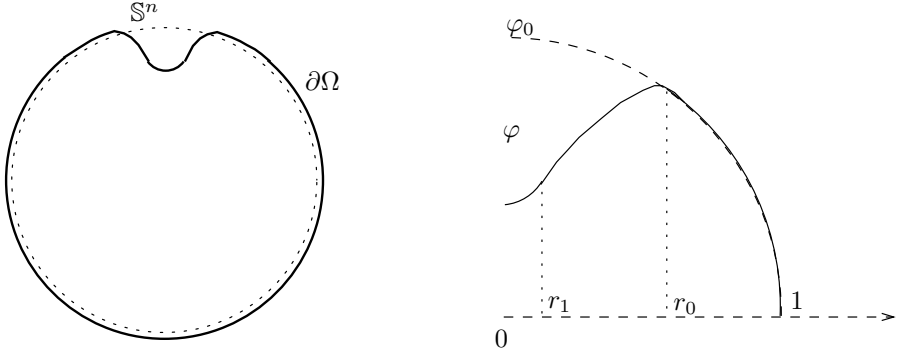\caption{{\small The function $\vphi$ is obtained by carefully joining two circular arcs of opposite curvature. The domain $\Om$ is obtained by slightly scaling out the resulting surface of revolution.}}\label{fig s}
\end{figure}
Figure \ref{fig s}.  Observe that by the definition of $\vphi$, $S \cap (\D_{r_1} \times (0,\infty))$ is a spherical cap.  By a classical computation,
   %
%
the mean curvature of $\pa\Om^*$ (as usual computed with respect to $\nu_{\Om^*}$) at the point $r\om+\vphi(r)\,e_{n+1}$ corresponding to $\om\in\SS^{n-1}$ and $r\in(0,1)$ is given by
\begin{equation}\label{H omega star}
	H(r)=H_{\Om^*}(r\om+\vphi(r)\,e_{n+1}) = \frac{-\vphi''(r)}{(1+\vphi'(r)^2)^{3/2}} - \frac{(n-1) \, \vphi'(r)}{r \sqrt{1+\vphi'(r)^2}} \,.
\end{equation}
Of course, since $\vphi=\vphi_0$ on $(r_0,1)$, we have $H(r)=n$ for $r\in(r_0,1)$. Since $\Om^*\subset B_1$, the boundary $\pa\Om^*$ is more curved than $\SS^n$ near $r=r_0$, and thus we have $H(r)>n$ for $r$ sufficiently close to, and less than, $r_0$.

Setting $\Om=(1+t)\Om^*$, we claim that for a suitable choice of $h$, we can achieve
\begin{equation} \label{Omega is admissible}
	H_\Om\le n\qquad\mbox{$\H^n$-a.e. on $\pa\Om$}\,.
\end{equation}
Since $(1+t)\,H_\Om=H_{\Om^*}$, we need
\begin{equation} \label{Omega is admissible 2}
	H(r) \leq n + n \, t \qquad\forall r\in(r_1,r_0)\,.
\end{equation}
By combining \eqref{H omega star} and \eqref{Omega is admissible 2}, we see that finding $h$ amounts to solving the differential inequality
\begin{equation} \label{diff ineq 1}
	H(r) = \frac{-\vphi''_0(r) + t\,h''(r)}{(1+(\vphi'_0(r) + t\,h'(r))^2)^{3/2}} + \frac{(n-1) \, (-\vphi'_0(r) + t\,h'(r))}{r \sqrt{1+(\vphi'_0(r) + t\,h'(r))^2}} \leq n+t\,n\,.
\end{equation}
We will find a solution $h$ which roughly behaves like the fundamental solution for the Laplacian, i.e. like $\log(1/r)$ when $n = 2$ and $r^{2-n}$ when $n > 2$.  The precise choice of $h$ is found by considering the Taylor's expansion of \eqref{diff ineq 1}. It is convenient to impose some structural conditions on $h$ in order to control the higher order terms in such expansion. Recalling that $h'(r)\le 0$ by \eqref{h decreasing convex}, we will require that
\begin{equation} \label{h' small}
	t\,|h'(r)| \leq t\,|h'(r_1)| \leq \frac{1}{K(n)}\,,
\end{equation}
and since we expect $h$ to behave like the fundamental solution of the Laplacian, we will also require that
\begin{equation} \label{h fund soln}
	\max\{|h'(r)|,\,r\,|h''(r)|\} \leq K(n) \, \frac{r_0^n}{r^{n-1}}\,,
\end{equation}
where recall that $K(n)$ is a large positive constant to be determined. Notice that by \eqref{mu} and \eqref{h' small} we definitely have
\begin{equation}
  \label{mu small}
  |\mu|\le \frac{3}{K(n)}\,.
\end{equation}

Now, let us rewrite the expression of $H(r)$ in \eqref{diff ineq 1} as
\[
H(r)=\Big((1-r^2)^{-3/2} + t \, h''\Big)\,g(r,t)^{-3/2}+(n-1) \, \Big((1-r^2)^{-1/2} + t \, r^{-1} \, h'\Big)\,g(r,t)^{-1/2}
\]
where
\begin{equation*}
	g(r,t) = (1-r^2)^{-1} + 2 \, t \, r \, (1-r^2)^{-1/2} \, h'(r) + t^2 \, h'(r)^2 \,,
\end{equation*}
and observe that by \eqref{r0 r1} and \eqref{h' small}
\begin{equation} \label{g near one}
	|g(r,t) - 1| \leq \frac{r^2}{1-r^2} + \frac{2\,r}{K\,(1-r^2)^{1/2}} + \frac{1}{K^2}=\Big(\frac{r}{\sqrt{1-r^2}}+\frac1{K}\Big)^2 \leq \frac{5}{K^2}
\end{equation}
for all $r \in (r_1,r_0)$ and $t \in (0,1)$.  Applying Taylor's theorem
\begin{equation*}
	f(t) = f(0) + f'(0) \, t + \int_0^t (t-s) \, f''(s) \, ds
\end{equation*}
to $f(t) = g(r,t)^{-k/2}$ for $k = 1,3$ and using \eqref{h' small} and \eqref{g near one},
\begin{align*}
	&\left| g(r,t)^{-k/2} - (1-r^2)^{k/2} + k \, t \, r \, (1-r^2)^{(k+1)/2} \, h'(r) \right| \nonumber \\
	&= \left| \int_0^t (t-s) \, h'(r)^2 \, \left( -k \, g(r,s)^{-k/2-1} + k(k+2) \, g(r,s)^{-k/2-2} \, (r \, (1-r^2)^{-1/2} + s \, h'(r))^2 \right) \, ds \right| \nonumber \\
	&\leq \frac{k+1}{2} \, t^2 \, h'(r)^2
\end{align*}
where in the last inequality one choose $K(n)$ large enough to make $r_0$ and $\mu$ (recall \eqref{mu small}) sufficiently small.   Hence,
\begin{eqnarray*}
	H(r) &\leq& \left( (1-r^2)^{-3/2} + t \, h'' \right) \left( (1-r^2)^{3/2} - 3 \, t \, r \, (1-r^2)^2 \, h' + 2 \, t^2 \, (h')^2 \right) \\&&
		+ (n-1) \, (1-r^2)^{-1/2} \left( (1-r^2)^{1/2} - t \, r \, (1-r^2) \, h' + t^2 \, (h')^2 \right) \\ &&
		+ (n-1) \, t \, r^{-1} \,h' \left( (1-r^2)^{1/2} - t \, r \, (1-r^2) \, h' - t^2 \, (h')^2 \right) \\
	&\leq& n + t \, (1-r^2)^{3/2} \, h'' + t \, h' \left( -3 \, r \, \sqrt{1-r^2} + (n-1) r^{-1} (1-r^2)^{3/2} \right) \\&&
		+ t^2 \, \left( -3\,r\,(1-r^2)^2\,h'\,h'' + 2\,(1-r^2)^{-3/2}\,(h')^2 + (n-1)\,(1-r^2)^{-1/2}\,(h')^2 \right) \\&&
		+ t^3 \, \left( 2\,(h')^2\,h'' - (n-1)\,r^{-1}\,(h')^3 \right) \,.
\end{eqnarray*}
Using \eqref{r0 r1} and \eqref{h fund soln},
\begin{align*}
	H(r) &\leq n + t \, (1-r^2)^{3/2} \, h'' + t \, h' \left( -3 \, r \, \sqrt{1-r^2} + (n-1) r^{-1} (1-r^2)^{3/2} \right) \\ &\hspace{15mm}
		+ (n+5) \, t^2 \, K^2 \, r_0^{2n} \, r^{2-2n} + (n+1) \, t^3 \, K^3 \, r_0^{3n} \, r^{2-3n}\,.
\end{align*}
Therefore we can guarantee \eqref{Omega is admissible 2} if
\begin{align} \label{example ode}
	&(1-r^2)^{3/2} \, h'' + h' \left( -3 \, r \, \sqrt{1-r^2} + (n-1) r^{-1} (1-r^2)^{3/2} \right) \nonumber \\&\hspace{15mm}
	+ (n+5) \, t \, K^2 \, \frac{r_0^{2n}}{r^{2n-2}} + (n+1) \, t^2 \, K^3 \, \frac{r_0^{3n}}{r^{3n-2}} = n\,.
\end{align}
We will treat the last two terms in \eqref{example ode} separately since $r^{2-3n}$ increases faster than $r^{2-2n}$ as $r \downarrow 0$ and thus, as will become apparent below, we need to use the full factor $t^2$ to control the last term of \eqref{example ode}.  Multiplying both sides by $r^{n-1}$ we get
\begin{equation*}
	 \frac{d}{dr} \left( r^{n-1} \, (1-r^2)^{3/2} \, h'(r) \right) = n\,r^{n-1} - (n+5) \, t \, K^2 \, r_0^{2n} \, r^{1-n} - (n+1) \, t^2 \, K^3 \, r_0^{3n} \, r^{1-2n} \,.
\end{equation*}
Integrating over $(r,r_0)$ and taking \eqref{h at r0} into account we find that, when $n > 2$
\begin{equation*}
	-r^{n-1} \, (1-r^2)^{3/2} \, h'(r)
	= r_0^n - r^n - \frac{n+5}{n-2} \, t \, K^2 \,\Big(\frac{r_0^{2n}}{r^{n-2}} - r_0^{n+2}\Big) - \frac{n+1}{2n-2} \, t^2 \, K^3 \,
\Big(\frac{r_0^{3n}}{r^{2n-2}} - r_0^{n+2}\Big) \,,
\end{equation*}
that is
\begin{eqnarray} \label{h deriv}
	h'(r) &=& - (1-r^2)^{3/2} \, \left( \frac{r_0^n}{r^{n-1}} - r - \frac{n+5}{n-2} \, t \, K^2 \,\Big(\frac{r_0^{2n}}{r^{2n-3}} -
\frac{r_0^{n+2}}{ r^{n-1}}\Big) \right. \nonumber \\&&\left.
	\hspace{5cm}- \frac{n+1}{2n-2} \, t^2 \, K^3 \, \Big(\frac{r_0^{3n}}{r^{3n-3}} - \frac{r_0^{n+2}}{r^{n-1}}\Big) \right)
\end{eqnarray}
If instead $n = 2$, then
\begin{eqnarray} \label{h deriv 2}
	h'(r) = - (1-r^2)^{3/2} \, \left( \frac{r_0^2}{r}- r - 7 \, t \, K^2 \, \frac{r_0^4}r \, \log\left(\frac{r_0}{r}\right)- \frac{3}{2} \, t^2 \, K^3 \, \Big(\frac{r_0^6}{r^{3}} - \frac{r_0^4}r\Big) \right)\,.
\end{eqnarray}
Integrating again over $(r,r_0)$, and using again \eqref{h at r0}, we find that, if $n>2$,
\begin{eqnarray} \label{h stated}
	h(r) &=& \int_r^{r_0} (1-s^2)^{3/2} \, \Big( \frac{r_0^n}{s^{n-1}} - s - \frac{n+5}{n-2} \, t \, K^2 \,\Big(\frac{r_0^{2n}}{s^{2n-3}} - \frac{r_0^{n+2}}{s^{n-1}}\Big)  \nonumber \\&&
	\hspace{5cm}- \frac{n+1}{2n-2} \, t^2 \, K^3 \, \Big(\frac{r_0^{3n}}{s^{3n-3}} - \frac{r_0^{n+2}}{s^{n-1}}\Big) \Big) \, ds\,,
\end{eqnarray}
while if $n=2$, then
\begin{eqnarray} \label{h stated 2}
	h(r) &=& \int_r^{r_0} (1-s^2)^{3/2} \, \left(\frac{r_0^2}{s}-s- 7 \, t \, K^2 \, \frac{r_0^4}{s}\, \log\left(\frac{r_0}{s}\right)- \frac{3}{2} \, t^2 \, K^3 \, \Big(\frac{r_0^6}{s^{3}} - \frac{r_0}{s}\Big) \right) \, ds\,.
\end{eqnarray}

Having obtained these formulas, we now used them to define $h$ and then check that in this way we obtain the desired family of sets.

More precisely, we argue as follows. For a large dimensional constant $K(n)$, we pick positive parameters $t$ and $r_0$ so that
\begin{equation}
  \label{new t r0}
     \frac{t}{r_0} <\frac1{K^2}\qquad r_0<\frac1{K}\,.
\end{equation}
(In particular, $t<1/K^3$.)  Next, pick any $\s$ such that
\begin{equation}
  \label{sigma lb}
   \frac{t}{r_0} <\s<\frac{1}{K^2}\,,
\end{equation}
and define $r_1$ by
\begin{equation} \label{r1 stated}
	r_1=\left(\frac{t}{\sigma}\right)^{1/(n-1)} \, r_0^{n/(n-1)}\qquad\mbox{so that}\qquad t=\frac{r_1^{n-1}}{r_0^n}\,\s\,.
\end{equation}
This choice of $r_1$ is motivated by the fact that we will need $t\,|h'(r_1)|\approx t\,r_0^nr_1^{1-n}<1/K$ (recall that $h'(r)\approx r_0^n\,r^{1-n}$). Notice that  $r_1$, $r_0$ and $t$ satisfy \eqref{r0 r1} thanks to \eqref{new t r0} and \eqref{r1 stated}.

Next we define $h\in C^2([r_1,r_0])$ by means of \eqref{h stated} if $n>2$ and of \eqref{h stated 2} if $n=2$. We claim that \eqref{h at r0}, \eqref{h decreasing convex}, \eqref{h' small} and \eqref{h fund soln}. Once this is checked, thanks to the above computations and setting $\Om=(1+t)\Om^*$ with $\Om^*$ defined thanks to \eqref{ny1} and \eqref{ny2}, we will be able to deduce that $\Om$ is an open set with $C^{1,1}$-boundary satisfying $H_\Om\le n$.

Let us thus check that $h$ satisfies \eqref{h at r0}, \eqref{h decreasing convex}, \eqref{h' small} and \eqref{h fund soln}. The validity of \eqref{h at r0} is immediately checked from the definition of $h$, while the other assertions will follow by showing that
\begin{eqnarray}
  \label{ny3}
  &&t\,|h'(r_1)|\le\frac1{K}\,,
  \\
  \label{ny4}
  &&-\frac{r_0^n}{r^{n-1}}\le h'(r)<-\Big(1-\frac{r}{r_0}\Big)\frac{r_0^n}{2\,r^{n-1}}\,,\qquad\forall r\in(r_1,r_0)\,,
  \\
  \label{ny5}
  &&0<h''(r)\le K\,\frac{r_0^n}{r^n}\,,\qquad\forall r\in(r_1,r_0)\,.
\end{eqnarray}
For proving \eqref{ny3} we just set $r = r_1$ into \eqref{h deriv} and then, thanks to \eqref{sigma lb} and \eqref{r1 stated}, we find that, if $n>2$,
\begin{eqnarray*}
	t \, |h'(r_1)| &\leq& t \, \frac{r_0^n}{r_1^{n-1}} +r_1\,t+ \frac{n+5}{n-2} \, t^2 \, K^2 \, \frac{r_0^{2n}}{r_1^{2n-3}} + \frac{n+1}{2n-2} \, t^3 \, K^3 \, \frac{r_0^{3n}}{r_1^{3-3n}} \\
	&=& \sigma +\sigma\frac{r_1^n}{r_0^n}+ \frac{n+5}{n-2} \, K^2 \, \sigma^2 \, r_1 + \frac{n+1}{2n-2} \, K^3 \, \sigma^3 \\
	&\leq& 3 \, \sigma < \frac{1}{K}\,.
\end{eqnarray*}
A similar computation in the case $n=2$ gives $t \, |h'(r_1)| \leq 3 \, \sigma \leq 1/K$ if $n = 2$. This proves \eqref{ny3}. The lower bound in \eqref{ny4} follows trivially by \eqref{h deriv} and \eqref{h deriv 2},
\begin{equation*}
	h'(r) \geq - (1-r^2)^{3/2} \, \frac{r_0^n}{r^{n-1}} \geq -\frac{r_0^n}{r^{n-1}} \,.
\end{equation*}
Concerning the fact that $h'(r)< 0$, we notice that by exploiting
\begin{equation*}
	1 \leq \frac{1 - (r/r_0)^k}{1-r/r_0} = \sum_{j=0}^{k-1} \left(\frac{r}{r_0}\right)^j \leq k\qquad\forall k\in\N\,,r\in(0,r_0)\,,
\end{equation*}
we find that, if $n>2$ and thanks to \eqref{h deriv},
\begin{eqnarray*}
	h'(r) &=& - (1-r^2)^{3/2} \, \bigg( \frac{r_0^n}{r^{n-1}} \, \Big( 1 - \frac{r^n}{r_0^n}\Big)
		- \frac{n+5}{n-2} \, t \, K^2 \, \frac{r_0^{2n}}{r^{2n-3}} \, \Big( 1 - \frac{r^{n-2}}{r_0^{n-2}}\Big)  \nonumber \\&&
		\hspace{6cm}- \frac{n+1}{2n-2} \, t^2 \, K^3 \, \frac{r_0^{3n}}{r^{3n-3}} \, \Big( 1 - \frac{r^{2n-2}}{r_0^{2n-2}}\Big) \bigg) \\
	&\leq& - (1-r^2)^{3/2} \, \frac{r_0^n}{r^{n-1}} \, \Big( 1 - \frac{r}{r_0} \Big) \Big( 1 - (n+5) \, t \, K^2 \, \frac{r_0^n}{r^{n-2}}
		- (n+1) \, t^2 \, K^3 \, \frac{r_0^{2n}}{r^{2n-2}} \Big)\,.
\end{eqnarray*}
Hence by \eqref{sigma lb} and \eqref{r1 stated}, and provided $K$ is large enough, we find
\begin{eqnarray*}
	h'(r)&\leq& - (1-r_0^2)^{3/2} \, \frac{r_0^n}{r^{n-1}} \, \Big( 1 - \frac{r}{r_0} \Big) \left(1 - (n+5)\, K^2 \, \sigma \, r_1
		- (n+1) \, K^3 \, \sigma^2 \right)
\\
&\le& -\Big( 1 - \frac{r}{r_0} \Big)\,\frac{r_0^n}{2\,r^{n-1}}
\end{eqnarray*}
where we have used $K^2r_1\,\s\le K^2 r_0\,\s\le 1/K$. Similarly, by the concavity of the logarithm ($\log(s)\le s-1$ for every $s>0$) and by definition of $r_1$, we find that if $r\in(r_1,r_0)$, then
\[
t\,r_0^2\,\frac{\log(r_0/r)}{1-(r/r_0)}\le \frac{t\,r_0^3}r\le t\,\frac{r_0^3}{r_1}=\s\,r_0\le\frac1{K^3}
\]
while
\[
\frac{3}{2} \, t^2 \, K^3 \, \frac{r_0^4}{r^{2}} \, \Big(1 -\frac{r^2}{r_0^2} \Big)\le
\frac{3}{2} \, t^2 \, K^3 \, \frac{r_0^4}{r_1^{2}}=\frac32\,K^3\,\s^2\,r_0^2\le\frac{3}{2K}\,,
\]
so that, by \eqref{h deriv 2},
\begin{eqnarray*}
	h'(r) &=&
    - (1-r^2)^{3/2} \, \frac{r_0^2}{r} \,
   \Big( 1 -\frac{r^2}{r_0^2} - 7 \, t \, K^2 \,r_0^2 \log\Big(\frac{r_0}{r}\Big)
	- \frac{3}{2} \, t^2 \, K^3 \, \frac{r_0^4}{r^{2}} \, \Big(1 -\frac{r^2}{r_0^2} \Big) \Big)
\\&=&
    - (1-r^2)^{3/2} \, \frac{r_0^2}{r} \,\Big(1-\frac{r}{r_0}\Big)
   \Big( 1 +\frac{r}{r_0} - 7 \, t \, K^2 \,r_0^2 \frac{\log(r/r_0)}{1-(r/r_0)}
	- \frac{3}{2} \, t^2 \, K^3 \, \frac{r_0^4}{r^{2}} \, \Big(1+\frac{r}{r_0} \Big) \Big)
    \\
	&\leq&
    - (1-r_0^2)^{3/2} \, \frac{r_0^2}{r} \,\Big(1-\frac{r}{r_0}\Big)
    \Big( 1   - \frac{7}{K}- \frac{3}{K} \Big)\le-\Big(1-\frac{r}{r_0}\Big)\frac{r_0^2}{2r}\,
\end{eqnarray*}
provided $K$ is large enough. This completes the proof of \eqref{ny4}. We now prove \eqref{ny5}.
We first check the upper bound: by dropping the positive terms on the left-hand side of \eqref{example ode} (and using also $h'<0$ to this end), we find that, since $h'(r)\ge -r_0^n/r^{n-1}$ and $r_0\le 1/K$,
\begin{equation*}
	h''(r) \leq \frac{n}{(1-r^2)^{3/2}}+ (n-1)\frac{|h'(r)|}r\leq \frac{n}{(1-K^{-2})^{3/2}}+ (n-1)\frac{r_0^n}{r^n}\le
  2n \, \frac{r_0^n}{r^{n}}\le K\,\frac{r_0^n}{r^{n}}\,,
\end{equation*}
provided $K$ is large enough. Concerning the lower bound in \eqref{ny5}, we exploit also the upper bound in \eqref{ny4} to find
\begin{eqnarray*}
	(1-r^2)^{3/2} \, h''&=& n-(n-1)\,(1-r^2)^{3/2} \,r\,h'
\\
&&+3\, r \, \sqrt{1-r^2}\,h'
		- (n+5) \, t \, K^2 \,\frac{r_0^{2n}}{r^{2n-2}} - (n+1) \, t^2 \, K^3 \, \frac{r_0^{3n}}{r^{3n-2}} \\
&\ge&
n+(n-1)\,(1-r^2)^{3/2} \,r\,\Big(1-\frac{r}{r_0}\Big)\,\frac{r_0^n}{2\,r^{n-1}}
\\
&&-3\,\sqrt{1-r^2}\,\frac{r_0^n}{r^{n-2}}
		- (n+5) \, t \, K^2 \,\frac{r_0^{2n}}{r^{2n-2}} - (n+1) \, t^2 \, K^3 \, \frac{r_0^{3n}}{r^{3n-2}}
\\
&\ge&
n+\frac{r_0^n}{r^n}\bigg(\frac{n-1}4\,\Big(1-\frac{r}{r_0}\Big)\,-3\,r^2\,
- (n+5) \, t \, K^2 \,\frac{r_0^{n}}{r^{n-2}} - (n+1) \, t^2 \, K^3 \, \frac{r_0^{2n}}{r^{2n-2}}\bigg)\,.
\end{eqnarray*}
Proceeding from this last inequality, we notice that if $r\in(\tau\,r_0,r_0)$, $\tau=1/2$, then by $t<1/K^3$ we find
\begin{eqnarray*}
(1-r^2)^{3/2}\,h''(r)&\ge& n-\frac{r_0^n}{r^n}
\bigg(3\,r^2+(n+5) \, t \, K^2 \,\frac{r_0^{n}}{r^{n-2}}+(n+1) \, t^2 \, K^3 \, \frac{r_0^{2n}}{r^{2n-2}}\bigg)
\\
&\ge& n-\frac{r^2}{\tau^n}
\bigg(3+\frac{n+5}{K\,\tau^{n-2}}+\frac{n+1}{K^3\,\tau^{2n-2}}\bigg)
\ge n-\frac{1}{K^2\,\tau^n}
\bigg(3+\frac{n+5}{K\,\tau^{n-2}}+\frac{n+1}{K^3\,\tau^{2n-2}}\bigg)\,.
\end{eqnarray*}
By choosing $K$ large enough with respect to $n$, we find $h''(r)>0$ for every $r\in(\tau\,r_0,r_0)$. We now pick $r\in(r_1,\tau\,r_0)$, and in this case we argue that, thanks to \eqref{r1 stated},
\begin{eqnarray*}
	(1-r^2)^{3/2} \, h''&\ge&
n+\frac{r_0^n}{r^n}\bigg(\frac{n-1}4\,(1-\tau)-3\,r_0^2\,
		- (n+5) \, t \, K^2 \,\frac{r_0^{n}}{r_1^{n-2}} - (n+1) \, t^2 \, K^3 \, \frac{r_0^{2n}}{r_1^{2n-2}}\bigg)
\\
&=&
n+\frac{r_0^n}{r^n}\bigg(\frac{n-1}4\,(1-\tau)-\frac{3}{K^2}
		- (n+5) \, K^2 \,\s\,r_1- (n+1) \, \s^2 \, K^3\bigg)\ge n\,,
\end{eqnarray*}
provided $K$ is large enough with respect to $n$. We have thus showed that $h''\ge0$, thus completing the proof of \eqref{ny5}.

So far we have proved that if $K$ is a sufficiently large positive dimensional constant, and we use \eqref{new t r0}, \eqref{sigma lb}, \eqref{r1 stated}, \eqref{h stated}, \eqref{h stated 2}, \eqref{ny1}, \eqref{mu} and \eqref{ny2} to choose $r_0$, $t$, $\s$ and $h$, and to correspondingly define  $\Om$, then $\Om$ is an open set with $C^{1,1}$-boundary such that $H_\Om\le n$. In this construction $t$ is ranging over the interval $(0,\s\,r_0)$, see \eqref{sigma lb}. We now check \eqref{est}.

First note that by \eqref{mu}, \eqref{h deriv}, \eqref{h deriv 2}, and \eqref{r1 stated}, $\mu$ satisfies
\begin{equation} \label{mu expansion}
	\mu = \vphi_0'(r_1) - t \, h'(r_1)
	= \sigma - \frac{n+1}{2n-2} \, K^3 \, \sigma^3 + O(t^{1/(n-1)}) \,.
\end{equation}
Using $-r_0^n \, r^{1-n} \leq h'(r) < 0$ and \eqref{mu expansion}, we compute that, if $n>2$,

First we notice that
\begin{eqnarray*}
	P(\Om^*)-P(B_1)&=&\H^{n-1}(\SS^{n-1}) \int_0^{r_0} \left( \sqrt{1 + \vphi'(r)^2} - \sqrt{1 + \vphi'_0(r)^2} \right) \, r^{n-1} \, dr
	\\
    &\le& C(n) \int_0^{r_0}     \big|\vphi'(r)^2 - \vphi'_0(r)^2 \big| \, r^{n-1} \, dr
\end{eqnarray*}
where we have multiplied and divided by $\sqrt{1 + (\vphi')^2}+\sqrt{1 + (\vphi'_0)^2}\ge 2$.
Now, by \eqref{mu small} and \eqref{r1 stated}
\begin{eqnarray}\nonumber
\int_0^{r_1} \big| (\vphi')^2 - (\vphi'_0)^2 \big| \, r^{n-1} \, dr&\le&
\int_0^{r_1} \Big( \frac{1}{(1+\mu^{-2}) r_1^2 - r^2} + \frac{1}{1-r^2} \Big) \, r^{n+1} \, dr
\\\nonumber
&\le&\Big(\frac{\mu^2}{r_1^2}+2\Big)\,\int_0^{r_1}r^{n+1}\,dr
\le C(n)(\mu^2\,r_1^n+r_1^{n+2})
\\\label{ny7}
&\le&C(n)\,r_1^n
\le C(n,\s)\,t^{n/(n-1)}\,,
\end{eqnarray}
while the fact that $|h'(r)|\le r_0^n/r^{n-1}$ for $r\in(r_1,r_0)$ (recall \eqref{ny3}) gives
\begin{eqnarray*}
\int_{r_1}^{r_0} \big| (\vphi')^2 - (\vphi'_0)^2 \big| \, r^{n-1} \, dr
    &\le&\int_{r_1}^{r_0} \Big(\frac{2 \, t \, r \,|h'(r)|}{\sqrt{1-r^2}} + t^2 \, h'(r)^2 \Big) \, r^{n-1} \, dr
    \\
    &\le& C(n)\,\int_{r_1}^{r_0} t\,r_0^n\,r+t^2\,\frac{r_0^{2n}}{r^{n-1}}\,dr
    \le C(n)\,\Big(t\,r_0^{n+2}+t^2\,r_0^{2n}\int_{r_1}^{r_0} r^{1-n}\Big)\,.
\end{eqnarray*}
By definition of $r_1$, if $n>2$ we find
\[
t^2\,r_0^{2n}\int_{r_1}^{r_0} r^{1-n}\le C(n)\,t^2\,\frac{r_0^{2n}}{r_1^{n-2}}=C(n)\,t^{2-(n-2)/(n-1)}\,r_0^{2n-n(n-2)/(n-1)}
\le C(n)\,t^{n/(n-1)}\,.
\]
and if  $n=2$ and by $t/r_0\s\le r_0^2$ (recall \eqref{sigma lb})
\[
t^2\,r_0^{2n}\int_{r_1}^{r_0} r^{1-n}=t^2\,r_0^4\,\log(r_0/r_1)=t^2\,r_0^4\,\log\Big(\frac{\s}{t\,r_0}\Big)
=\s\,t\,r_0^3\,\frac{t\,r_0}{\s}\,\log\Big(\frac{\s}{t\,r_0}\Big)\le \s\,t\,r_0^3\,.
\]
Summarizing we have proved
\[
\int_{r_1}^{r_0} \big| (\vphi')^2 - (\vphi'_0)^2 \big| \, r^{n-1} \, dr\le \k(n)\,t
\]
for a constant $\k(n)$ that can be made arbitrarily small by choosing $K(n)$ large enough. By combining this estimate with \eqref{ny7}
we find
\[
P(\Om^*)-P(B_1)\le C(n,\s)\,t\,.
\]
Since we can enforce $(1+t)^n\le 1+2nt$ for every $t<1/K$, we find
\begin{eqnarray*}
  \de(\Om)=P(\Om)-P(B_1)=(1+t)^n\,P(\Om^*)-P(B_1)\le 2\,n\,t\,P(\Om^*)+P(\Om^*)-P(B_1)\le C(n,\s)\,t\,,
\end{eqnarray*}
that is
\begin{equation}
  \label{de expansion}
  \limsup_{t\to 0^+}\frac{\de(\Om)}t\le C(n,\s)\,.
\end{equation}
Next we notice that $\pa\Om=\{(1+u(x))x:x\in\SS^n\}$ for a function $u$ such that
\begin{eqnarray}\nonumber
\|u\|_{C^0(\SS^n)}&=&\vphi_0(0)-(1+t)\vphi(0)=1-(1+t)\,\vphi(0)
\\\nonumber
&=&1-(1+t) \, \sqrt{1-r_1^2} + t \, (1+t) \, h(r_1)+(1+t) \Big(\sqrt{1+\mu^2}-1\Big)\frac{r_1}{\mu}
\\
&\ge&\label{hd eqn1}
t\,h(r_1)-t\,.
\end{eqnarray}
By \eqref{h stated}, when $n>2$ we have
\begin{eqnarray}\label{hd eqn2}
	t\,h(r_1) &=&
\int_{r_1}^{r_0} \Big( t \, \frac{r_0^n}{s^{n-1}}-t\,s - \frac{n+1}{2n-2} \, t^3 \, K^3 \, \frac{r_0^{3n}}{s^{3n-3}} \Big) \, ds
\\
&&
		+ \int_{r_1}^{r_0} \left( (1-s^2)^{3/2} - 1 \right) \Big( t \, \frac{r_0^n}{s^{n-1}}-t\,s - \frac{n+1}{2n-2} \, t^3 \, K^3 \, \frac{r_0^{3n}}{s^{3n-3}} \Big) \, ds \nonumber
\\
&&
		- \int_{r_1}^{r_0} \frac{n+5}{n-2} \, t^2 \, K^2 \, (1-s^2)^{3/2} \, \Big(\frac{r_0^{2n}}{s^{2n-3}} - \frac{r_0^{n+2}}{s^{n-1}}\Big) \, ds \nonumber \\
	&=& I_1 + I_2 + I_3\,.
\end{eqnarray}
By \eqref{r1 stated},
\begin{eqnarray} \label{hd I1}
	I_1 &=& \int_{r_1}^{r_0} \Big( t \, \frac{r_0^n}{s^{n-1}}-t\,s - \frac{n+1}{2n-2} \, t^3 \, K^3 \, \frac{r_0^{3n}}{s^{3n-3}} \Big) \, ds \nonumber \\
	&=& \frac{t\,r_0^n}{n-2}\Big(\frac1{r_1^{n-2}}-\frac1{r_0^{n-2}}\Big)-\frac{t}2\,(r_0^2-r_1^2)- \frac{(n+1) t^3 \, K^3 \, r_0^{3n}}{(2n-2)(3n-4)}\Big(\frac1{r_1^{3n-4}}-\frac1{r_0^{3n-4}}\Big)
\nonumber \\
	&\ge& \Big(\frac{t}{\sigma}\Big)^{1/(n-1)} \, r_0^{n/(n-1)} \Big( \frac{\sigma}{n-2} - \frac{(n+1) \, K^3 \, \sigma^3}{(2n-2)(3n-4)} \Big)
-C(n,\s)\,t\,.
\end{eqnarray}
Using $1 - (1-s^2)^{3/2} \leq (3/2) \, s^2$ for $s \in (r_1,r_0)$ and also using \eqref{r1 stated}, for $I_2$ we find
\begin{eqnarray} \label{hd I2}
	|I_2| &\leq& \frac{3}{2} \int_{r_1}^{r_0} \left( t \, \frac{r_0^n}{s^{n-3}} + \frac{n+1}{2n-2} \, t^3 \, K^3 \, \frac{r_0^{3n}}{s^{3n-5}} \right)
\, ds
\end{eqnarray}
where
\begin{eqnarray*}
t\,r_0^n\int_{r_1}^{r_0} \frac{ds}{s^{n-3}}&=&\s\,r_1^{n-1}\int_{r_1}^{r_0} \frac{ds}{s^{n-3}}\le C(n,\s)\,
\left\{\begin{split}
&r_1^2&\quad\mbox{if $n=3$}
\\
&r_1^3\log(r_0/r_1)&\quad\mbox{if $n=4$}
\\
&r_1^3&\quad\mbox{if $n>4$}
\end{split}\right.\\
&
\le&C(n,\s)\,t^{2/(n-1)}\,.
\end{eqnarray*}
and
\begin{eqnarray*}
t^3\,r_0^{3n}\,\int_{r_1}^{r_0} \frac{ds}{s^{3n-5}}&\le&\s^3\,r_1^{3(n-1)}\int_{r_1}^{r_0} \frac{ds}{s^{3n-5}}
\le C(n,\s)\,r_1^3
\le C(n,\s)\,t^{2/(n-1)}\,.
\end{eqnarray*}
Thus,
\begin{equation}
  \label{stima I2}
  |I_2|\le C(n,\s)\,t^{2/(n-1)}\,.
\end{equation}
Similarly,
\begin{eqnarray} \label{hd I3}
	|I_3|&\leq&C(n)\,t^2\,r_0^{2n} \int_{r_1}^{r_0} \frac{ds}{s^{2n-3}} \, ds
\le C(n,\s)\,r_1^{2n-2}\int_{r_1}^{r_0} \frac{ds}{s^{2n-3}} \, ds\\
&\le&\nonumber C(n,\s)\,r_1^2\le
C(n,\s)\,t^{2/(n-1)}\,.
\end{eqnarray}
By combining \eqref{hd eqn2}, \eqref{hd I1}, \eqref{stima I2}, and \eqref{hd I3} we conclude that if $n>2$, then
\begin{eqnarray}\nonumber
\|u\|_{C^0(\SS^n)}&\ge&
\Big(\frac{t}{\sigma}\Big)^{1/(n-1)} \, r_0^{n/(n-1)} \Big( \frac{\sigma}{n-2} - \frac{(n+1) \, K^3 \, \sigma^3}{(2n-2)(3n-4)} \Big)
-C(n,\s)\,t^{2/(n-1)}
\\\label{ny x}
&\ge& \frac{t^{1/(n-1)}}{C(n,\s)}\,,
\end{eqnarray}
up to consider a suitably large value of $K$, and where we have used $\s<1/K^2$ and $t<1/K$. We can obtain a similar inequality when $n=2$. It suffices to notice that, this time starting from \eqref{h stated 2},
\begin{eqnarray*}
t\,h(r_1)&=&\int_{r_1}^{r_0} \Big(\frac{t\,r_0^2}{s}-ts- \frac{3}{2} \, t^3 \, K^3 \, \Big(\frac{r_0^6}{s^{3}} - \frac{r_0}{s}\Big) \Big) \, ds
\\
&&+\int_{r_1}^{r_0} \Big((1-s^2)^{3/2}-1\Big) \, \left(\frac{t\,r_0^2}{s}-ts- \frac{3}{2} \, t^3 \, K^3 \, \Big(\frac{r_0^6}{s^{3}} - \frac{r_0}{s}\Big) \right) \, ds
\\
&&-7\,\int_{r_1}^{r_0} (1-s^2)^{3/2}\, t^2 \, K^2 \, \frac{r_0^4}{s}\, \log\left(\frac{r_0}{s}\right)\, ds=I_1+I_2+I_3
\end{eqnarray*}
where now using $t\,r_0^2=\s\,r_1$ we find
\begin{eqnarray*}
I_1&=&\int_{r_1}^{r_0} \Big(\frac{t\,r_0^2}{s}-ts- \frac{3}{2} \, t^3 \, K^3 \, \Big(\frac{r_0^6}{s^{3}} - \frac{r_0}{s}\Big) \Big) \, ds
  \ge t\,r_0^2\,\log\Big(\frac{\s}{r_0\,t}\Big)-C(n,\s)\,t
\\
|I_2|&\le&\frac32\int_{r_1}^{r_0} s^2 \Big|\frac{t\,r_0^2}{s}-ts- \frac{3}{2} \, t^3 \, K^3 \, \Big(\frac{r_0^6}{s^{3}} - \frac{r_0}{s}\Big) \Big| \, ds\le C(n,\s)\,t\,,
\end{eqnarray*}
while since $s^{-1}\log(r_0/s)$ is decreasing on $s\in(0,r_0)$,
\[
|I_3|\le C(n,\s)\,t^2\,r_0^4\int_{r_1}^{r_0}\frac1{s}\,\log\left(\frac{r_0}{s}\right)\, ds\le C(n,\s)\,t^2\,r_0^4\,\frac{r_0-r_1}{r_1}\,\log\left(\frac{r_0}{r_1}\right)\le C(n,\s)\,t\,r_0^4\,\log\Big(\frac{\s}{r_0\,t}\Big)\,.
\]
Hence, provided $K$ is large enough,
\begin{eqnarray}
  t\,h(r_1)\ge t\,\Big(r_0^2-C(n,\s)\,r_0^4\Big)\,\log\Big(\frac{\s}{r_0\,t}\Big)-C(n,\s)\,t\ge\frac{t\,\log(\s/r_0t)}{C(n,\s)}\,,
\end{eqnarray}
which combined with \eqref{hd eqn1} gives us that, if $n=2$, then
\begin{equation}
  \label{ny y}
  \|u\|_{C^0(\SS^2)}\ge \frac{t\,\log(\s/r_0t)}{C(n,\s)}\,.
\end{equation}
By combining \eqref{de expansion} with \eqref{ny x} and \eqref{ny y} we complete the proof of \eqref{est}.

\section{A sharp result for boundaries with almost constant mean curvature}\label{section sharp alexandrov} Here we prove Theorem \ref{thm alex} and Theorem \ref{thm alex L2}, starting from the latter.

\begin{proof}[Proof of Theorem \ref{thm alex L2}] Let $\Om$ be an open set with $C^{1,1}$-boundary in $\R^{n+1}$ with $\int_{\pa\Om}x=0$ and
\[
\pa \Om=\big\{(1+u(x))x:x\in\SS^n\big\}, \quad \|u\|_{C^1(\SS^n)} \le \e(n)
\]
for a function $u\in C^1(\SS^n)$. If we let $\e=\e(n)$ be as in Lemma \ref{thm quant u prelim}, and we argue as in the first three steps of the proof of Lemma \ref{thm quant u prelim} (where the assumption $H_\Om\le n$ of Lemma \ref{thm quant u prelim} was not invoked), then, writing $u=a+b\cdot x+R$ as in \eqref{u a b R}, so that
\[
a= \frac1{\H^n(\SS^n)}\,\int_{\mathbb{S}^n} u\qquad b_i=\frac{\int_{\SS^n}x_i\,u}{\int_{\SS^n}x_i^2}\quad i=1,...,n\,,
\]
we have the estimates \eqref{R poincare} and \eqref{b controlled by DR}
\begin{eqnarray}\label{R poincare 2}
  (2n+1)\int_{\SS^n}R^2&\le&\int_{\SS^n}|\nabla R|^2\,,
  \\\label{b controlled by DR 2}
  |b| &\leq& C(n) \, \int_{\mathbb{S}^n} \, |\nabla R|^2+\e\,{\rm O}\big(\|u\|_{W^{1,2}(\SS^n)}^2\big)\,,
\end{eqnarray}
as well as the identities \eqref{H formula u} and \eqref{later}
\begin{align} \label{H formula u 2}
	H^*= -\op{div}_{\mathbb{S}^n} \left( \frac{\nabla u}{(1+u) \, \sqrt{(1+u)^2 + |\nabla u|^2}} \right) + \frac{ n-\frac{|\nabla u|^2}{(1+u)^2} }{\sqrt{(1+u)^2 + |\nabla u|^2}} \,
\end{align}
\begin{eqnarray}\label{later 2}
  n\,\H^n(\SS^n)\,a^2+\int_{\SS^n}|\nabla u|^2-n\,u^2=\int_{\SS^n}|\nabla R|^2-n\,R^2
\end{eqnarray}
where $H^*(x)=H_\Omstar(x+u(x)\,x)$ for each $x\in\SS^n$. Subtracting $n$ from both sides of \eqref{H formula u 2} and multiplying by $u$, we find that
\begin{eqnarray*}
\int_{\SS^n}(H^*-n)\,u=\int_{\SS^n}|\nabla u|^2-n\,u^2+ \e\,{\rm O}\big(\|u\|_{W^{1,2}(\SS^n)}^2\big)\,.
\end{eqnarray*}
By \eqref{later 2}, \eqref{R poincare 2} and \eqref{b controlled by DR 2},  we thus have
\begin{eqnarray}\nonumber
  \int_{\SS^n}|\nabla R|^2+R^2+|b|&\le&
  C(n)\,\Big(a^2+\int_{\SS^n}(H^*-n)\,u\Big)+\e\,{\rm O}\big(\|u\|_{W^{1,2}(\SS^n)}^2\big)
  \\&\le&\label{wait1}
  C(n)\,\Big(a^2+\|H_\Om-n\|_{L^2(\pa\Om)}\,\|u\|_{L^2(\SS^n)}\Big)+\e\,{\rm O}\big(\|u\|_{W^{1,2}(\SS^n)}^2\big)\,,
\end{eqnarray}
where we have used $\|H^*-n\|_{L^2(\SS^n)}\le C(n)\,\|H_\Om-n\|_{L^2(\pa\Om)}$.  By integrating \eqref{H formula u 2} over $\SS^n$ after subtracting $n$ from both of its sides, we find that
\begin{eqnarray}\label{stima a Alex 0}
  \Big|n\int_{\SS^n}u-\int_{\SS^n}(n-H^*)\Big|\le C(n)\,\int_{\SS^n}u^2+|\nabla u|^2\,.
\end{eqnarray}
By the Cauchy-Schwarz inequality
\[
  \Big| \int_{\SS^n}(n-H^*) \Big|\le C(n) \, \|H^*-n\|_{L^2(\SS^n)} \le C(n)\,\|H_\Om-n\|_{L^2(\pa\Om)} \,.
\]
By combining this estimate with \eqref{stima a Alex 0} we find that
\[
|a|\le C(n)\,\Big(\int_{\SS^n}u^2+|\nabla u|^2+\|H_\Om-n\|_{L^2(\pa\Om)}\Big)\,.
\]
which together with \eqref{wait1} gives us
\begin{eqnarray*}
  \int_{\SS^n}|\nabla u|^2+u^2&\le& C(n)\,
  \int_{\SS^n}|\nabla R|^2+R^2+|b|^2+a^2
  \\
  &\le&
  C(n)\,\Big(\|H_\Om-n\|_{L^2(\pa\Om)}^2+\|H_\Om-n\|_{L^2(\pa\Om)}\,\|u\|_{L^2(\SS^n)}\Big)+\e\,{\rm O}\big(\|u\|_{W^{1,2}(\SS^n)}^2\big)
\end{eqnarray*}
and thus
\begin{equation}\label{ie}
  \|u\|_{W^{1,2}(\SS^n)}\le  C(n)\, \|H_\Om-n\|_{L^2(\pa\Om)}\,.
\end{equation}
This proves \eqref{alex L2 bound}. We now prove \eqref{alex C0 bound new}. Let us pick $p$ as in \eqref{p hp new}, and notice we can apply \eqref{stima after} from Lemma \ref{thm quant u prelim 2} to deduce
\begin{equation}
  \label{stima after x}
      \|u\|_{C^0(\SS^n)}\le C(n,q)\,\Big(\|u\|_{L^2(\SS^n)}+\|H_\Om-n\|_{L^q(\SS^n)}\Big)\,,\qquad\forall q>\frac{n}2\,.
\end{equation}
By setting $q=p$ if $n\ge 4$, or by fixing any $q\in(n/2,2)$ otherwise, we immediately deduce
\[
 \|u\|_{C^0(\SS^n)}\le C(n,p)\,\|H-n\|_{L^p(\SS^n)}\,,
\]
by combining \eqref{alex L2 bound} (that is \eqref{ie}), H\"older inequality and \eqref{stima after x}. We conclude the proof of Theorem \ref{thm alex L2} by noticing that if we now assume $\|u\|_{C^{1,\a}(\SS^n)}\le K$ for some $\a\in(0,1)$ and $K>0$, then \eqref{alex C1alpha} follows immediately by combining \eqref{stima after holder} from Lemma \ref{thm quant u prelim 2} with \eqref{alex C0 bound new}.
\end{proof}

\begin{proof}
  [Proof of Theorem \ref{thm alex}] By applying \cite[Theorem 2.5]{ciraolomaggi} while taking into account that $P(\Om)\le 2\tau P(B_1)$ we find that, up to a translation setting $\int_{\pa\Om}x=0$, $\pa\Om=\{(1+u(x))x:x\in\SS^n\}$ for a function $u\in C^{1,1}(\SS^n)$ such that $\|u\|_{C^{1,1/2}(\SS^n)}\le C(n)$ and $\|u\|_{C^1(\SS^n)}$ is arbitrarily small provided $\de_{{\rm cmc}}(\Om)$ is suitably small. We are thus in the position to apply conclusion \eqref{alex C1alpha} from Theorem \ref{thm alex L2} to $\Om$ (with the choice $\a=1/2$) to conclude the proof.
\end{proof}

\bibliography{references}
\bibliographystyle{is-alpha}

\end{document}